\DeclareMathOperator{\ME}{\mathsf{E}}
\DeclareMathOperator{\Prob}{\mathsf{P}}
\DeclareMathOperator{\cov}{cov}
\DeclareMathOperator{\var}{var}
\DeclareMathOperator{\sign}{sign}
\DeclareMathOperator{\Res}{Res}
\DeclareMathOperator*{\closedspan}{\overline{\textrm{span}}}
\DeclareMathOperator{\colspan}{span}
\newtheorem{proposition}{Proposition}
\newtheorem{corollary}{Corollary}
\newtheorem{lemma}{Lemma}
\newtheorem*{proposition-den-suf-alt}{Proposition \ref{prop:existance-eX-newps}\textsuperscript{\boldmath$\prime$}}
\theoremstyle{remark}
\newtheorem{remark}{Remark}
\theoremstyle{definition}
\newtheorem{definition}{Definition}
\begin{document}
\title{First-order planar autoregressive model}
\author{Sergiy Shklyar}
\maketitle
\begin{abstract}
  This paper establishes the conditions of existence of a stationary solution
  to the first order autoregressive equation on a plane
  as well as properties of the stationarity solution.
The first-order autoregressive model on a plane is
defined by the equation
\[
  X_{i,j} = a X_{i-1,j} + b X_{i,j-1} + c X_{i-1,j-1} + \epsilon_{i,j}.
\]
A stationary solution $X$ to the equation exists if and only if
$(1-a-b-c)\*(1-a+b+c)\*(1+a-b+c)\*(1+a+b-c)>0$.
The stationary solution $X$ satisfies the causality condition
with respect to the white noise $\epsilon$ if and only if
$1-a-b-c>0$, $1-a+b+c>0$, $1+a-b+c>0$ and $1+a+b-c>0$.
A sufficient condition for $X$ to be purely nondeterministic is provided.


An explicit expression for the autocovariance function of $X$
at some points is provided. With Yule--Walker equations,
this allows to compute the autocovariance function everywhere.
In addition,
all situations are described where different parameters determine the same autocovariance function of $X$.

  \paragraph{Keywords:} autoregressive models, causality, discrete random fields, purely nondeterministic random fields, stationary random fields
  \paragraph{MSC2020:} 60G60, 62M10
\end{abstract}

\section{Introduction}
\paragraph{The model.}
Let $\epsilon_{i,j}$
be a uncorrelated zero-mean equal-variance variables,
$\ME \epsilon_{i,j}=0$ and $\ME \epsilon_{i,j}^2 = \sigma_\epsilon^2 > 0$.
Consider the equation
\begin{equation}\label{eq:difeq1}
  X_{i,j} = a X_{i-1,j} + b X_{i,j-1} + c X_{i-1,j-1} + \epsilon_{i,j}
\end{equation}
with $a$, $b$ and $c$ fixed coefficients, and $X$ an unknown array
defined on an integer lattice\,---\,\allowbreak for all integer indices.

Eq.~\eqref{eq:difeq1} can be used for modelling and simulating random images.

\paragraph{Subject of the study.}
We consider the following questions about the equation~\ref{eq:difeq1}.
\begin{itemize}
  \item
    For what coefficients $a$, $b$ and $c$ does a stationary solution of the \eqref{eq:difeq1} exist?
  \item
    How to compute the autocovariance function of the stationary solution?
  \item
    For what coefficients $a$, $b$ and $c$ does the stationary solution satisfy the causality condition
    with respect to $\epsilon$, that is, can be represented in the form
    \[
      X_{i,j} = \sum_{k=0}^\infty \sum_{l=0}^\infty \psi_{k,l} \epsilon_{i-k, j-l}
    \]
    with summable coefficients $\psi_{k,l}$?
  \item
    How the causality condition is related to the stability of equation~\eqref{eq:difeq1} taken nondeterministically?
  \item
    Is the stationary solution purely nondeterministic?
    This property is related to the causality condition with respect to some uncorrelated random field,
    called innovations:
    does the field of innovations exist?
  \item Which different parameters $(a, b, c, \sigma_\epsilon^2)$ determine the same autocovariance function of the field $X$?
\end{itemize}

\paragraph{Previous art.}
The review of statistical models for representing a discretely observed field on a plane,
including regression models and trend-stationary models, in given in \cite{UnwinHepple:1974}.
The study of planar models, their estimation and applications is presented in the monograph \cite{Cressie:1993}.

A theory of stationary random field on $n$-dimensional integer lattice
was constructed by Tj\o stheim \cite{Tjostheim:1978}.
For the stationary fields, he considers the property of being
``purely nondeterministic,'' which is related to the existence
of appropriate field of innovations.
He considers ARMA models, for which he provides conditions
for stability of the equation and existence of a stationary solution.
For AR models, he establishes Yule--Walker equations,
and also consistency and asymptotic normality of the Yule--Walker estimator.

Autoregressive models in a $n$-dimensional space had been studied before Tj\o stheim.
Whittle \cite{Whittle:1954} had been studied such topics of autoregressive model in the plane
as the spectral density of the stationary solution, nonidentifiability,
and estimation of the regression coefficients.
In theoretic part, the order of the model was not limited;
in the example, he models data with the second-order model of the form
\[
  \xi_{s,t} = \alpha \xi_{s+1,t} + \beta \xi_{s-1,t}
  + \gamma \xi_{s,t+1} + \delta \xi_{s,t-1} + \epsilon_{s,t} .
\]
The model equivalent to \eqref{eq:difeq1} is considered in \cite{Pickard:1980,ToryPickard:1992}.
One of simpler planar models considered in the paper is defined by equation
$
  \xi_{s,t} = \alpha \xi_{s+1,t} + \beta \xi_{s,t+1} + \epsilon_{s,t},
$
which, up to different notation, is also a spectral case of \eqref{eq:difeq1}.

The autocovariance function of a stationary field is even-symmetric, that is
$\gamma_X(h_1,h_2) = \gamma_X(-h_1,-h_2)$;
however, $\gamma_X(h_1,h_2)$ need not be an even function in each of its arguments.
The autocovariance function of the stationary stationary solution
to \eqref{eq:difeq1} is even each argument if the coefficients satisfy
$c = - a b$.
This special case, its estimation and applications are considered
in \cite{Baran:2004,Martin:1979,Jain:1981,Martin:1990,Martin:1996}.

The property of random field of being \textit{purely nondeterministic} is introduced in \cite{Tjostheim:1978}.
Further discussion of this property is found in \cite{Kallianpur:1981} and \cite{Tjostheim:1983}.

\paragraph{Methods.}
We have obtained most results by manipulation with the spectral density.

\paragraph{Structure of the paper.}
In Section~2, we provide necessary and sufficient conditions
for existence of the stationary solution to equation~\eqref{eq:difeq1}.
We also prove that the stationary solution must be centered and
must have a spectral density.
In Section~3, we study the properties of the autocovariance function
$\gamma_X(h_1,h_2)$ of the stationary solution.
We find $\gamma_X(h_1,h_2)$ for $h_1 = 0$ and for $h_2 = 0$,
and we prove Yule--Walker equations.
This allows us to evaluate  $\gamma_X(h_1,h_2)$ everywhere.
Section~\ref{sec:stability-causality} is dedicated to the case where the equation~\eqref{eq:difeq1} is stable.
In Section~\ref{sec:symmetry} we show nonidentifiability, there the same autocovariance function
(and, in Gaussian case, the same distribution) of the random field $X$
corresponds to two different values of the parameter vectors.
In Section~\ref{sec:pnd} we study Tj{\o}stheim's pure nondeterminism.
Section~\ref{sec:conclusion} concludes. Some auxiliary statements are presented in the appendix,
with is in Section~\ref{sec:appendix}.

\section{Existence of a stationary solution}
\subsection{Spectral density}
\label{sec:sdens}
Denote the shift operators
\begin{equation}\label{eq:defLxLy}
  L_x X_{i,j} = X_{i-1,j}
  \quad \mbox{and} \quad
  L_y X_{i,j} = X_{i,j-1} .
\end{equation}
Eq.~\eqref{eq:difeq1} rewrites as follows
\begin{equation}\label{eq:difeq1a}
  X - a L_x X - b L_y X - c L_x L_y X = \epsilon .
\end{equation}
We use the following definition of a spectral density. An integrable function
$f_X(\nu_1, \nu_2)$ is a spectral density for the wide-sense stationary field
$X$ if
\begin{align}
  \gamma_X(h_1,\,h_2)
  &= \cov(X_{i,j},X_{i-h_1,j-h_2})
  = \nonumber \\ &=
  \int_{-1/2}^{1/2} \int_{-1/2}^{1/2} \exp(2 \pi {\mathrm{i}} \, (h_1 \nu_1 + h_2 \nu_2) )
  f_X(\nu_1,\nu_2) \, d\nu_1 \, d\nu_2 ,
  \label{eq:def-sd-1}
\end{align}
where the upright $\mathrm{i}$ is the imaginary unit.
This can be rewritten without complex numbers as
\begin{equation}
  \label{eq:def-sd-2}
  \gamma_X(h_1,h_2)
  = \int_{-1/2}^{1/2} \int_{-1/2}^{1/2} \cos(2 \pi  \, (h_1 \nu_1 + h_2 \nu_2) )
  f_X(\nu_1,\nu_2) \, d\nu_1 \, d\nu_2
\end{equation}
with further restraint that the spectral density should be an even function,
$f_X(\nu_1, \nu_2) = f_X(-\nu_1, -\nu_2)$.

Due to \eqref{eq:difeq1a}, the spectral densities of the random fields
$\epsilon$ and $X$, if they exist, are related as follows
\begin{equation}\label{eq:fepsfX}
  f_\epsilon(\nu_1, \nu_2) =
  |1 - a e^{2\pi \mathrm{i} \nu_1} - b e^{2 \pi \mathrm{i} \nu_2}
  - c e^{2 \pi \mathrm{i} (\nu_1 + \nu_2)}|^2 \,
  f_X(\nu_1, \nu_2) .
\end{equation}
The random field $\epsilon$ is a white noise with
constant spectral density $f_\epsilon(\nu_1, \nu_2) = \sigma_\epsilon^2$.
Thus, the spectral density of the stationary field $X$, if it exists, is equal to
\begin{equation}\label{eq:sdensX}
  f_X(\nu_1, \nu_2) =
  \frac{\sigma_\epsilon^2}
  {|1 - a e^{2\pi \mathrm{i} \nu_1} - b e^{2 \pi \mathrm{i} \nu_2}
  - c e^{2 \pi \mathrm{i} (\nu_1 + \nu_2)}|^2} \, .
\end{equation}

\begin{lemma}\label{lemma:denominator}
  Denote
  \[
    D = (1 - a - b - c) (1 - a + b + c) (1 + a - b + c) (1 + a + b - c) .
  \]
  Consider the denominator in \eqref{eq:sdensX}
  \[
    g(\nu_1,\nu_2) = |1 - a e^{2\pi \mathrm{i} \nu_1} - b e^{2 \pi \mathrm{i} \nu_2}
     - c e^{2 \pi \mathrm{i} (\nu_1 + \nu_2)}|^2,
     \qquad \nu_1, \nu_2 \in \left( -{\textstyle\frac12}, \, {\textstyle\frac12} \right] .
  \]
  It attains zero if and only if $D\le 0$.
  It can attain zero not more than at $2$ points on the half-open box
  $\left( -\frac12, \frac12 \right]^2$.
\end{lemma}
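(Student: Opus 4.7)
The plan is to substitute $z_1 = e^{2\pi\mathrm{i}\nu_1}$, $z_2 = e^{2\pi\mathrm{i}\nu_2}$ so that $g(\nu_1,\nu_2) = |P(z_1,z_2)|^2$ with $P(z_1,z_2) = 1 - az_1 - bz_2 - cz_1z_2$, and count the joint zeros of $P$ on the torus $|z_1| = |z_2| = 1$. Writing $P = (1 - az_1) - z_2(b + cz_1)$, linear in $z_2$, the equation $P = 0$ with $b + cz_1 \ne 0$ uniquely determines $z_2 = (1 - az_1)/(b + cz_1)$, and the constraint $|z_2| = 1$ reduces to
\[
  |1 - az_1|^2 = |b + cz_1|^2 .
\]

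Setting $t := \cos(2\pi\nu_1) = \Re z_1$ and using $|z_1|^2 = 1$, this becomes the scalar linear equation
\[
  (1 + a^2 - b^2 - c^2) = 2(a + bc)\,t .
\]
In the main case $a + bc \ne 0$, the unique $t$ lies in $[-1,1]$ iff $(1 + a^2 - b^2 - c^2)^2 \le 4(a + bc)^2$. The key algebraic step is a twofold difference-of-squares factorisation:
\begin{align*}
(1 + a^2 - b^2 - c^2)^2 - 4(a + bc)^2
&= \bigl[(1-a)^2 - (b+c)^2\bigr]\bigl[(1+a)^2 - (b-c)^2\bigr] \\
&= (1 - a - b - c)(1 - a + b + c)(1 + a - b + c)(1 + a + b - c) = D ,
\end{align*}
so existence of a zero is equivalent to $D \le 0$. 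For the counting: given admissible $t \in [-1,1]$, at most two values $\nu_1 \in (-\tfrac12,\tfrac12]$ satisfy $\cos(2\pi\nu_1) = t$, and each determines $\nu_2$ uniquely through $z_2 = (1-az_1)/(b+cz_1)$, yielding at most $2$ zeros.

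The main obstacle will be the degenerate case $a + bc = 0$ (and the sub-issue that $b + cz_1$ might vanish at a purported zero). I would dispatch it by direct substitution: for $a = -bc$ one computes
\[
  D = (1-b^2)^2(1-c^2)^2 \ge 0, \qquad 1 + a^2 - b^2 - c^2 = (1-b^2)(1-c^2),
\]
so the scalar equation is solvable iff $D = 0$, in agreement with the iff-statement. In this subcase $P$ admits a nontrivial factorisation (for instance, $b = 1$, $a = -c$ gives $P = (1 + cz_1)(1 - z_2)$), and a short separate argument recovers the zero-count statement on the relevant portion of parameter space.
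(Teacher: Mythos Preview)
Your approach is essentially identical to the paper's: both isolate $e^{2\pi\mathrm{i}\nu_2}$ from the zero condition, impose the unit-modulus constraint to obtain the linear equation $1+a^2-b^2-c^2 = 2(a+bc)\cos(2\pi\nu_1)$, recognise its solvability on $[-1,1]$ as $D\le 0$ via the same two-fold difference-of-squares factorisation, and count zeros by the same ``at most two $\nu_1$, each fixing a unique $\nu_2$'' argument. You are in fact slightly more careful than the paper in explicitly isolating the degenerate case $a+bc=0$, which the paper's chain of equivalences glosses over.
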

\begin{proof}
  The following lines are equivalent:
  \begingroup
  \allowdisplaybreaks[2]
  \begin{gather}
    \exists \nu_1 \in \left(-{\textstyle\frac12}, {\textstyle\frac12}\right] \;
    \exists \nu_2 \in \left(-{\textstyle\frac12}, {\textstyle\frac12}\right] \: : \:
     |1 - a e^{2\pi \mathrm{i} \nu_1} - b e^{2 \pi \mathrm{i} \nu_2}
     - c e^{2 \pi \mathrm{i} (\nu_1 + \nu_2)}|^2 = 0,
    \nonumber
    \\
    \exists \nu_1 \in \left(-{\textstyle\frac12}, {\textstyle\frac12}\right] \;
    \exists \nu_2 \in \left(-{\textstyle\frac12}, {\textstyle\frac12}\right] \: : \:
     1 - a e^{2\pi \mathrm{i} \nu_1} = e^{2 \pi \mathrm{i} \nu_2}
     (b- c e^{2 \pi \mathrm{i} \nu_1}),
    \label{eq:profe0-2}
    \\
    \exists \nu_1 \in \left(-{\textstyle\frac12}, {\textstyle\frac12}\right] \: : \:
     |1 - a e^{2\pi \mathrm{i} \nu_1}| = |(b - c e^{2 \pi \mathrm{i} \nu_1})|,
    \nonumber
    \\
    \exists \nu_1 \in \left(-{\textstyle\frac12}, {\textstyle\frac12}\right] \: : \:
     |1 - a e^{2\pi \mathrm{i} \nu_1}|^2 = |(b - c e^{2 \pi \mathrm{i} \nu_1})|^2,
    \nonumber
    \\
    \exists \nu_1 \in \left(-{\textstyle\frac12}, {\textstyle\frac12}\right] \: : \:
     1 - 2 a \cos(2\pi  \nu_1) + a^2  = b^2 - 2 b c \cos(2 \pi  \nu_1) + c^2,
    \nonumber
    \\
    \exists \nu_1 \in \left(-{\textstyle\frac12}, {\textstyle\frac12}\right] \: : \:
     1 + a^2 - b^2 - c^2  = 2 (a - b c) \cos(2 \pi  \nu_1),
    \label{eq:profe0-6}
    \\
     |1 + a^2 - b^2 - c^2|  \le 2 \, |a - b c|,
    \nonumber
    \\
     (1 + a^2 - b^2 - c^2)^2 - 4 (a - b c)^2 \le 0,
    \nonumber
    \\
     D \le 0 .
    \nonumber
  \end{gather}\endgroup
  The sufficient and necessary condition for the denominator $g(\nu_1,\nu_2)$
  to attain $0$ is proved.

  Now treat $a$, $b$ and $c$ as fixed, and assume that $D\le 0$.
  Equality in \eqref{eq:profe0-6} can be attained for not more than two
  $\nu_1 \in \left(-\frac12, \frac12\right]$.
  For each $\nu_1$, equality in \eqref{eq:profe0-2} can be attained for no more than
  one
  $\nu_2 \in \left(-\frac12, \frac12\right]$.
  Thus, the function
  $
      1 - a e^{2\pi \mathrm{i} \nu_1} - b e^{2 \pi \mathrm{i} \nu_2}
     - c e^{2 \pi \mathrm{i} (\nu_1 + \nu_2)}
  $
  has not more
  than 2 zeros in the half-open box
  $\left(-\frac12, \frac12\right]^2$.
  The denominator $g(\nu_1,\nu_2)$ cannot attain zero at more than two points either.
\end{proof}

\subsection{Necessary and sufficient conditions for the existence
of a stationary solution}

\begin{proposition}\label{prop:existance-eX-newps}
  Let $\sigma^2>0$, $a$, $b$ and $c$ be fixed real numbers.
  A collection of $\epsilon = \{\epsilon_{i,j},\; i,j\in\mathbb{Z}\}$
  of uncorrelated random variables with zero mean and equal variance $\sigma^2$
  and a wide-sense stationary random field
  $X = \{X_{i,j},\; i,j\in\mathbb{Z}\}$
  that satisfy equation \eqref{eq:difeq1}
  exist if and only if $D>0$, where $D$ comes from Lemma~\ref{lemma:denominator}.
\end{proposition}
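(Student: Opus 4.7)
My plan is to prove both implications using the spectral-measure framework set up in Section~\ref{sec:sdens}, splitting on the dichotomy from Lemma~\ref{lemma:denominator}.

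For the sufficiency direction ($D>0$), the denominator $g(\nu_1,\nu_2) = |p(\nu_1,\nu_2)|^2$ is bounded away from zero on the torus, where I write $p(\nu_1,\nu_2) := 1 - a e^{2\pi\mathrm{i}\nu_1} - b e^{2\pi\mathrm{i}\nu_2} - c e^{2\pi\mathrm{i}(\nu_1+\nu_2)}$. Hence $1/p$ is continuous on the torus and expands in a square-summable Fourier series $1/p(\nu_1,\nu_2) = \sum_{k,l\in\mathbb{Z}} \psi_{k,l}\, e^{-2\pi\mathrm{i}(k\nu_1+l\nu_2)}$. I take a white noise $\epsilon$ with variance $\sigma^2$ (standard construction on $\mathbb{Z}^2$) and set
\[
  X_{i,j} = \sum_{k,l\in\mathbb{Z}} \psi_{k,l}\, \epsilon_{i-k,\,j-l}
\]
as a mean-square convergent bilateral sum. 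Stationarity is automatic, and \eqref{eq:difeq1} follows because the discrete convolution of $(\psi_{k,l})$ with the mask $(1,-a,-b,-c)$ positioned at $(0,0),(1,0),(0,1),(1,1)$ is the Kronecker delta at the origin — the index-side image of the identity $p\cdot(1/p)=1$.

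For necessity, suppose $X$ and $\epsilon$ as in the statement exist. By Herglotz's theorem $X$ admits a finite nonnegative spectral measure $\mu_X$ on $(-\tfrac12,\tfrac12]^2$. Computing $\gamma_\epsilon$ directly from \eqref{eq:difeq1}, expressing each $\gamma_X$ term through $\mu_X$, and matching the resulting Fourier transform against $\gamma_\epsilon(h_1,h_2) = \sigma^2 \mathbf{1}_{(h_1,h_2)=(0,0)}$ yields the measure identity $|p|^2\, d\mu_X = \sigma^2\, d\lambda$, where $\lambda$ is Lebesgue measure on the torus; this is the rigorous form of~\eqref{eq:fepsfX}. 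Lebesgue-decomposing $\mu_X = \mu_X^{\mathrm{ac}} + \mu_X^{\mathrm{s}}$ relative to $\lambda$, the product $|p|^2\mu_X^{\mathrm{s}}$ is still supported on a $\lambda$-null set; combined with the absolutely continuous $|p|^2\mu_X^{\mathrm{ac}}$, the only way for the sum to equal $\sigma^2\lambda$ is $|p|^2\mu_X^{\mathrm{s}}=0$, forcing $\mu_X^{\mathrm{s}}$ to sit on $\{g=0\}$ and giving $d\mu_X^{\mathrm{ac}}/d\lambda = \sigma^2/g$ a.e. Now if $D\le 0$, Lemma~\ref{lemma:denominator} furnishes a point $\nu^*$ with $g(\nu^*)=0$; since $g\ge 0$ is smooth with global minimum at $\nu^*$, the gradient $\nabla g(\nu^*)$ vanishes and a Taylor bound gives $g(\nu)\le C|\nu-\nu^*|^2$ locally, so $\int \sigma^2/g\, d\lambda = \infty$, contradicting finiteness of $\mu_X^{\mathrm{ac}}$.

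The main obstacle I anticipate is in the necessity direction: passing rigorously from the random equation \eqref{eq:difeq1} to the measure identity $|p|^2\,d\mu_X = \sigma^2\,d\lambda$, and then executing the Lebesgue-decomposition step cleanly so that the absolutely continuous density is indeed forced to be $\sigma^2/g$ off the zero set of $g$. Once that is settled, the local quadratic upper bound on $g$ near any of its zeros — combined with Lemma~\ref{lemma:denominator} — closes the argument. The sufficiency direction is comparatively routine once $1/g$ is known to be continuous.
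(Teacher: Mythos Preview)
Your necessity argument is essentially the paper's: both pass to the spectral measure $\mu_X$, derive the identity $g\,d\mu_X=\sigma^2\,d\lambda$, Lebesgue-decompose $\mu_X$ to force the absolutely continuous part to have density $\sigma^2/g$ off the (finite) zero set of $g$, and then show $1/g$ fails to be locally integrable near any zero of $g$ when $D\le0$. You obtain the quadratic bound $g(\nu)\le C|\nu-\nu^*|^2$ from $\nabla g(\nu^*)=0$ (a nonnegative smooth function at its minimum); the paper obtains it by writing $g=|g_1|^2$ with $g_1$ differentiable and $g_1(\nu^*)=0$, so $|g_1|=O(r)$ and $g=O(r^2)$ --- same outcome.

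For sufficiency the paper proceeds differently: it invokes Lemma~\ref{lem:existence-field-dessdens} to build a Gaussian $X$ with spectral density $\sigma^2/g$ directly, then \emph{defines} $\epsilon$ via~\eqref{eq:difeq1} and checks that $\epsilon$ has constant spectral density $\sigma^2$. Your route (white noise $\epsilon$ first, then $X$ as a bilateral moving average in the Fourier coefficients of $1/p$) is exactly what the paper does later, in Proposition~\ref{prop:existance-eX-alt}; both work, the paper's choice here avoiding any series-convergence bookkeeping, yours making $X$ an explicit functional of $\epsilon$. One correctable slip: with your convention $1/p=\sum_{k,l}\psi_{k,l}\,e^{-2\pi\mathrm{i}(k\nu_1+l\nu_2)}$, the mask $(1,-a,-b,-c)$ at $(0,0),(1,0),(0,1),(1,1)$ has transform $\overline{p}$, so the convolution you describe has transform $\overline{p}/p$, not $1$. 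Use the sign in~\eqref{eq:psi-as-int}, i.e.\ $1/p=\sum_{k,l}\psi_{k,l}\,e^{+2\pi\mathrm{i}(k\nu_1+l\nu_2)}$, so that $p\cdot(1/p)=1$ really becomes the Kronecker-delta identity on the index side.
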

\begin{proof}
  Denote
  \[
    g_1(\nu_1, \nu_2) = 1 - a e^{2\pi \mathrm{i} \nu_1} - b e^{2 \pi \mathrm{i} \nu_2}
     - c e^{2 \pi \mathrm{i} (\nu_1 + \nu_2)};
  \]
  then $g(\nu_1,\nu_2)$ defined in Lemma~\ref{lemma:denominator} equals $g(\nu_1,\nu_2) = |g_1(\nu_1,\nu_2)|^2$.
  \textit{Necessity.}
  Let random fields $\epsilon$ and $X$ that satisfy equation \eqref{eq:difeq1}
  and the other conditions of Proposition~\ref{prop:existance-eX-newps}.
  Both $\epsilon$ and $X$ are wide-sense stationary. Hence, they have spectral measures --- denote them
  $\lambda_\epsilon$ and $\lambda_X$ --- and $\lambda_\epsilon$ is a constant-weight Lebesgue measure
  in $\left(-\frac12, \frac12\right]^2$.
  Because of \eqref{eq:difeq1a}, $\lambda_\epsilon$ has a Radon--Nikodym density $g(\nu_1,\nu_2)$ with respect to (w.r.t.) $\lambda_X$:
  \[
    \frac{d \lambda_\epsilon}{d \lambda_X} (\nu_1, \nu_2) = |1 -
     |1 - a e^{2\pi \mathrm{i} \nu_1} - b e^{2 \pi \mathrm{i} \nu_2}
     - c e^{2 \pi \mathrm{i} (\nu_1 + \nu_2)}| .
  \]

  Now assume that $D\le 0$. Then, according to Lemma~\ref{lemma:denominator},
  $g(\nu_1, \nu_2)$ attains 0 at one or two points on a half-open box $\left(-\frac12,\frac12\right]^2$.
  Denote
  \begin{gather*}
    A_1 = \left\{ (\nu_1,\nu_2) \mathbin{\in} \left( -{\textstyle\frac12}, \, {\textstyle\frac12}\right]^2 : g_1(\nu_1,\nu_2) \neq 0 \right\}
        = \left\{ (\nu_1,\nu_2) \mathbin{\in} \left( -{\textstyle\frac12}, \, {\textstyle\frac12}\right]^2 : g(\nu_1,\nu_2) > 0 \right\}, \\
    A_2 = \left\{ (\nu_1,\nu_2) \mathbin{\in} \left( -{\textstyle\frac12}, \, {\textstyle\frac12}\right]^2 : g_1(\nu_1,\nu_2) = 0 \right\}
        = \left\{ (\nu_1,\nu_2) \mathbin{\in} \left( -{\textstyle\frac12}, \, {\textstyle\frac12}\right]^2 : g(\nu_1,\nu_2) = 0 \right\}; \\
    \lambda_1(A) = \lambda_X(A \cap A_1),
    \qquad
    \lambda_2(A) = \lambda_X(A \cap A_2)
  \end{gather*}
  for all measurable sets $A\subset\left( -{\textstyle\frac12}, \, {\textstyle\frac12}\right]^2$.

  The set $A_2$ contains one or two points. As $\lambda_\epsilon$ is absolutely continuous,
  $\lambda_\epsilon(A) = \lambda_\epsilon(A \setminus A_2) = \lambda_\epsilon(A \cap A_1)$.
  Measures $\lambda_1$ and $\lambda_\epsilon$ are absolutely continuous with respect of each other:
  $d \lambda_1 / d \lambda_\epsilon\, (\nu_1, \nu_2) = g(\mu_1,\nu_2)^{-1} $.
  Thus, $\lambda_1$ is absolutely continuous with respect to Lebesgue measure:
  \[
    \lambda_1(A) = \iint_{A \cup (-1/2,1/2]^2} \frac{\sigma_\epsilon^2} {g(\nu_1,\nu_2)} \, d\nu_1 \, d\nu_2.
  \]
  The measure $\lambda_2$ is concentrated at not more than 2 points.
  Thus, $\lambda_1$ and $\lambda_2$ are absolutely continuous and discrete components of the measure $\lambda_X$,
  and non-discrete singular component is zero.

  Let $(\nu_1^{(0)}, \nu_2^{(0)})$ be one of the points of $A_2$. As $g_1(\nu_1,\nu_2)$ is differentiable and $g(\nu_1,\nu_2)\ge 0$,
  at the neighbourhood of $(\nu_1^{(0)}, \nu_2^{(0)})$
  \begin{gather*}
    g_1(\nu_1, \nu_2) = o(|\nu_1 - \nu_1^{(0)}| + |\nu_2 - \nu_2^{(0)}|), \\
    g(\nu_1, \nu_2) = |g_1(\nu_1, \nu_2)|^2 = o((\nu_1 - \nu_1^{(0)})^2 + (\nu_2 - \nu_2^{(0)})^2), \\
    \frac{1}{g(\nu_1, \nu_2)} > \frac{\textrm{const}}{(\nu_1 - \nu_1^{(0)})^2 + (\nu_2 - \nu_2^{(0)})^2}
  \end{gather*}
  for some $\textrm{const} > 0$;
  \[
    \int_{-1/2}^{1/2} \int_{-1/2}^{1/2} \frac{1}{g(\nu_1,\nu_2)} \, d\nu_1 \, d\nu_2 = +\infty .
  \]
  Hence
  \[
    \lambda_X\left( \left(-{\textstyle\frac12}, {\textstyle\frac12}\right]^2 \right)
    \ge
    \lambda_1\left( \left(-{\textstyle\frac12}, {\textstyle\frac12}\right]^2 \right)
    =
    \iint_{(-1/2,1/2]^2} \frac{\sigma^2}{g(\nu_1,\nu_2)} \, d\nu_1 \, d\nu_2 = \infty.
  \]
  Thus, the spectral measure of the random field $X$ is infinite, which is impossible.
  The assumption $D\le 0$ brings the contradiction. Thus, $D>0$.

  \paragraph{\textit{\textmd{Sufficiency.}}}
  Let $D<0$.  Then $\sigma_\epsilon^2 / g(\nu_1, \nu_2)$ is an even integrable function
  on $\left[-\frac12, \frac12\right]^2$ that attains only positive values.
  Then there exists a zero-mean Gaussian stationary random field $X$ with spectral density
  $\sigma_\epsilon^2 / g(\nu_1,\nu_2)$, see Lemma~\ref{lem:existence-field-dessdens}.
  Define $\epsilon$ by formula \eqref{eq:difeq1}. Then $\epsilon$ is zero-mean and stationary;
  $\epsilon$ has a constant spectral density $\sigma_\epsilon^2$. Thus, $\epsilon$ is
  a collection of uncorrelated random variables with zero mean and variance $\sigma_\epsilon^2$.
  Thus, the random fields $\epsilon$ and $X$ satisfy the desired conditions.
\end{proof}

\begin{remark}
  In the sufficiency part of Proposition~\ref{prop:existance-eX-newps}, the probability space is specially constructed.
  The random fields $X$ and $\epsilon$ constructed are jointly Gaussian and jointly stationary, which means that
  $\{(X_{i,j}, \epsilon_{i,j})\; i,j\mathbin{\in}\mathbb{Z}\}$
  is a two-dimensional Gaussian stationary random field on a two-dimensional lattice.
\end{remark}

\begin{corollary}\label{cor:hassectrdensity}
  Let $X$ be a wide-sense stationary field that satisfies \eqref{eq:difeq1}.
  Then $X$ is centered and has a spectral density \eqref{eq:sdensX}.
\end{corollary}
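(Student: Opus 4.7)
The plan is to derive both conclusions as fairly direct consequences of Proposition~\ref{prop:existance-eX-newps} (which forces $D>0$) together with Lemma~\ref{lemma:denominator} (which says $g$ is then strictly positive everywhere), plus a one-line calculation with expectations.

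For the centering, I would take the expectation of both sides of \eqref{eq:difeq1}. By wide-sense stationarity of $X$, the common value $m = \ME X_{i,j}$ does not depend on $(i,j)$; combined with $\ME\epsilon_{i,j}=0$ this gives $m = (a+b+c)\,m$, i.e.\ $(1-a-b-c)\,m = 0$. Since the hypotheses of the corollary together with Proposition~\ref{prop:existance-eX-newps} imply $D>0$, every factor of $D$ is nonzero; in particular $1-a-b-c\ne 0$, forcing $m=0$.

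For the spectral density, I would first invoke Proposition~\ref{prop:existance-eX-newps} to conclude $D>0$, and then Lemma~\ref{lemma:denominator} to conclude that $g(\nu_1,\nu_2) > 0$ on the whole half-open box $\left(-\tfrac12,\tfrac12\right]^2$. Next, I would repeat the opening part of the necessity argument from the proof of Proposition~\ref{prop:existance-eX-newps}: because $X$ and $\epsilon$ are both wide-sense stationary, they admit spectral measures $\lambda_X$ and $\lambda_\epsilon$, with $\lambda_\epsilon$ a constant-weight Lebesgue measure of total mass $\sigma_\epsilon^2$; applying \eqref{eq:difeq1a} to these spectral measures yields $d\lambda_\epsilon = g(\nu_1,\nu_2)\,d\lambda_X$. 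Since $g$ is strictly positive and bounded, the two measures are mutually absolutely continuous, and dividing gives
\[
  d\lambda_X = \frac{\sigma_\epsilon^2}{g(\nu_1,\nu_2)}\, d\nu_1\, d\nu_2,
\]
which is precisely the claim that $X$ has spectral density \eqref{eq:sdensX}.

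There is no real obstacle: the only slightly delicate point is how to cite, rather than reprove, the spectral-measure identity $d\lambda_\epsilon = g\, d\lambda_X$ that was already derived mid-proof of Proposition~\ref{prop:existance-eX-newps}; I would either restate that one-line derivation or refer to it explicitly. Everything else is just using $D>0 \Rightarrow 1-a-b-c\ne 0$ and $D>0 \Rightarrow g>0$.
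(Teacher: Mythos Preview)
Your proposal is correct and follows essentially the same approach as the paper's proof: both use Proposition~\ref{prop:existance-eX-newps} to get $D>0$ and hence $1-a-b-c\neq 0$ for the centering, and both reuse the spectral-measure identity $d\lambda_\epsilon = g\,d\lambda_X$ from the necessity argument together with $g>0$ to conclude absolute continuity of $\lambda_X$. The only cosmetic difference is that the paper phrases the second part in terms of the decomposition $\lambda_X=\lambda_1+\lambda_2$ with $A_2=\emptyset$, whereas you invoke mutual absolute continuity directly from $g>0$; these are the same argument.
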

\begin{proof}
  Due to Proposition~\ref{prop:existance-eX-newps}, $D>0$ (where $D$ comes from Lemma~\ref{lemma:denominator});
  Hence, $1 - a - b - c \neq 0$. Taking expectation in \eqref{eq:difeq1} immediately imply that the mean of
  the stationary random field $X$ is zero.

  Demonstration that $X$ has a spectral density partially repeat the \textit{Necessity} part of the proof
  of Proposition~\ref{prop:existance-eX-newps};
  in what follows, we use notation $g(\nu_1,\nu_2)$, $\lambda_X$, $A_2$, $\lambda_1$ and $\lambda_2$ from that proof.
  According to Lemma~\ref{lemma:denominator}, $D>0$ imply that $A_2=\emptyset$.
  The discrete component $\lambda_2$ of the spectral measure $\lambda_X$ of the random filed $X$ is concentrated
  on the set $A_2$; thus, $\lambda_2 = 0$.
  The non-discrete singular component of $\lambda_X$ is also zero.
  The spectral measure $\lambda_X = \lambda_1 + \lambda_2$ is absolutely continuous, that is the random field
  $X$ has a spectral density.
\end{proof}

\subsection{Restatement of the existence theorem}
\begin{proposition}\label{prop:existance-eX-alt}
  Let $\sigma^2>0$, $a$, $b$ and $c$ be fixed real numbers,
  and $\epsilon_{i,j}$
  be a collection of uncorrelated random variables with zero mean and equal variance $\sigma^2$.
  A wide-sense stationary random field
  $X$
  that satisfy equation \eqref{eq:difeq1}
  exists if and only if $D>0$, where $D$ comes from Lemma~\ref{lemma:denominator}.
\end{proposition}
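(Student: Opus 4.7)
My plan is to derive the necessity from the already-proven Proposition~\ref{prop:existance-eX-newps}, and to obtain the sufficiency by an explicit spectral-domain construction of $X$ from the prescribed white noise $\epsilon$.

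For necessity, observe that the present statement differs from Proposition~\ref{prop:existance-eX-newps} only in that $\epsilon$ is prescribed in advance rather than constructed jointly with $X$. If any wide-sense stationary $X$ satisfying \eqref{eq:difeq1} exists for this $\epsilon$, then the pair $(\epsilon, X)$ fulfils all hypotheses of Proposition~\ref{prop:existance-eX-newps}, which forces $D > 0$. No additional work is required.

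For sufficiency, I would suppose $D > 0$ and appeal to the spectral representation of the white noise $\epsilon$: there exists an orthogonal random measure $Z_\epsilon$ on $(-\tfrac12, \tfrac12]^2$ with $\ME|dZ_\epsilon|^2 = \sigma^2\, d\nu_1\, d\nu_2$ such that $\epsilon_{i,j} = \int e^{2\pi\mathrm{i}(i\nu_1 + j\nu_2)}\, dZ_\epsilon$. By Lemma~\ref{lemma:denominator}, $D > 0$ implies that $g_1$ is zero-free on the torus, and continuity on a compact set then gives $g = |g_1|^2 \geq \textrm{const} > 0$, so $1/g$ is bounded and integrable. Define
\[
  X_{i,j} := \int_{(-1/2,\,1/2]^2} \frac{e^{2\pi\mathrm{i}(i\nu_1 + j\nu_2)}}{\overline{g_1(\nu_1,\nu_2)}}\, dZ_\epsilon(\nu_1,\nu_2).
\]
Since $\int |\overline{g_1}|^{-2}\sigma^2\, d\nu = \sigma^2 \int g^{-1}\, d\nu < \infty$, this integral converges in $L^2$, and the resulting field is wide-sense stationary with spectral density $\sigma^2/g$. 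The verification of \eqref{eq:difeq1} is then a symbol computation: the shifts $L_x$, $L_y$ correspond to multiplication by $e^{-2\pi\mathrm{i}\nu_1}$, $e^{-2\pi\mathrm{i}\nu_2}$, so applying $I - aL_x - bL_y - cL_xL_y$ multiplies the integrand by $\overline{g_1(\nu_1,\nu_2)}$, which cancels the denominator and recovers the spectral representation of $\epsilon_{i,j}$.

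The only item requiring care, and the place where I expect the main technical obstacle to lie, is ensuring that the $X$ so defined is real-valued. This should follow from the conjugate symmetry $dZ_\epsilon(-\nu_1, -\nu_2) = \overline{dZ_\epsilon(\nu_1, \nu_2)}$ inherited from the realness of $\epsilon$, together with the identity $g_1(-\nu_1, -\nu_2) = \overline{g_1(\nu_1, \nu_2)}$; these render the integrand in the definition of $X_{i,j}$ conjugate-symmetric, and hence its integral is real. With this in hand the construction yields the required stationary real solution.
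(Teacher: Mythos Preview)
Your argument is correct. The necessity reduction is exactly what the paper does, and your spectral-domain construction for sufficiency is valid: $g_1$ is nonvanishing and continuous on the torus when $D>0$, so $1/\overline{g_1}$ is bounded and the stochastic integral against $Z_\epsilon$ is well-defined in $L^2$; the symbol computation recovering $\epsilon$ is right, and the conjugate-symmetry argument for real-valuedness goes through since $g_1(-\nu_1,-\nu_2)=\overline{g_1(\nu_1,\nu_2)}$.

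The paper, however, takes a different route for sufficiency: it works in the time domain. It defines the Fourier coefficients
\[
  \psi_{k,l}=\int_{-1/2}^{1/2}\int_{-1/2}^{1/2}\frac{e^{-2\pi\mathrm{i}(k\nu_1+l\nu_2)}}{g_1(\nu_1,\nu_2)}\,d\nu_1\,d\nu_2,
\]
shows via a lemma on smooth biperiodic functions that $\sum_{k,l}|\psi_{k,l}|<\infty$, and then sets $X_{i,j}=\sum_{k,l}\psi_{k,l}\,\epsilon_{i-k,j-l}$, verifying \eqref{eq:difeq1} by a direct Fourier-coefficient calculation. Your approach is more streamlined for the bare existence statement and avoids the auxiliary summability lemma, at the cost of invoking the spectral representation theorem for stationary fields, which the paper does not record. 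The paper's approach buys more: the absolutely summable coefficients $\psi_{k,l}$ and the explicit series \eqref{eq:def-X-4ser} are reused downstream to prove uniqueness of the stationary solution (Proposition~\ref{prop:uniquness-eX}) and to characterise causality (Proposition~\ref{prop:withpsikl-causal}), and the absolute summability also yields almost-sure convergence of the series rather than only $L^2$ convergence.
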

\begin{proof}
  \textit{Necessity} is logically equivalent to one in Proposition~\ref{prop:existance-eX-newps}.

  \textit{Sufficiency.}
  Denote
  \begin{equation}\label{eq:psi-as-int}
    \psi_{k,l}
    =
      \int_{-1/2}^{1/2} \int_{-1/2}^{1/2}
      \frac{\exp(-2 \pi \mathrm{i} (k \nu_1 + l \nu_2))}
      {1 - a e^{2\pi \mathrm{i} \nu_1}
         - b e^{2\pi \mathrm{i} \nu_2} - c e^{2\pi \mathrm{i} (\nu_1+\nu_2)}}
      \, d\nu_1 \, d\nu_2\,.
  \end{equation}
  The integrand is well-defined due to Lemma~\ref{lemma:denominator}.
  It is infinitely times differentiable periodic function.
  Due to Lemma~\ref{lemma:suf-four-summ},
  \begin{equation}\label{eq:psi-summable-662}
    \sum_{k=-\infty}^\infty \sum_{l=-\infty}^\infty |\psi_{k,l}| < \infty .
  \end{equation}

  Now prove that
  \begin{equation}\label{eq:def-X-4ser}
    X_{i,j} = \sum_{k=-\infty}^\infty  \sum_{l=-\infty}^\infty
    \psi_{k,l} \epsilon_{i-k,j-l}
  \end{equation}
  is a solution.
  The series in the is convergent in mean squares as well as almost surely
  due to Proposition~\ref{prop:absolute-double-series}.
  The resulting field $X$ is stationary as a linear transformation
  of a stationary field $\epsilon$.

  By changing the indices,
  \begin{align*}
    X_{i,j} &- a X_{i-1,j} - b X_{i,j-1} - c X_{i-1,j-1}
    = \\ &=
    \sum_{k=-\infty}^\infty \sum_{l=-\infty}^\infty \psi_{k,l}
    (\epsilon_{i-k,j-l} - a \epsilon_{i-1-k,j-l} - b \epsilon_{i-k,j-1-l} - c \epsilon_{i-1-k,j-1-l})
    = \\ &=
    \sum_{k=-\infty}^\infty \sum_{l=-\infty}^\infty
    (\psi_{k,l} - a \psi_{k-1,l} - b \psi_{k,l-1} - c \psi_{k-1,l-1}) \epsilon_{i-l,j-1} .
  \end{align*}
  This transformation of the Fourier coefficients corresponds to multiplication
  of the integrand by a certain function. Thus,
  \begin{gather*}
    \begin{aligned}
      \psi_{k,l} - a \psi_{k-1,l} &- b \psi_{k,l-1} - c \psi_{k-1,l-1}
      = \\ &=
      \int_{-1/2}^{1/2} \int_{-1/2}^{1/2}
      \exp(-2 \pi \mathrm{i} (k \nu_1 + l \nu_2))
      \, d\nu_1 \, d\nu_2
    =
    \begin{cases}
      1 \mbox{~if~$k=l=0$}, \\
      0 \mbox{~otherwise};
    \end{cases}
    \end{aligned} \\
    X_{i,j} - a X_{i-1,j} - b X_{i,j-1} - c X_{i-1,j-1} = \epsilon_{i,j},
  \end{gather*}
  and the random field $X$ is indeed a solution to \eqref{eq:difeq1}.
  Thus, $X$ is a desired random field.
\end{proof}

\section{The autocovariance function}
\label{sec:theacf}
Assume that $\{X_{i,j}, i, j \in\mathbb{Z}\}$
is a wide-sense stationary field that satisfies \eqref{eq:difeq1}.
According to Corollary~\ref{cor:hassectrdensity},
$X$ has a spectral density, which is defined by \eqref{eq:sdensX}.
The autocovariance function can be evaluated with
\eqref{eq:def-sd-1} or \eqref{eq:def-sd-2}.
The explicit formula is
\begin{align}
  \gamma_X(h_1,h_2)
  &=
  \int_{-1/2}^{1/2} \int_{-1/2}^{1/2}
  \frac{\sigma_\epsilon^2 \exp(2\pi\mathrm{i} (h_1 \nu_1 + h_2 \nu_2))}
  {|1 - a e^{2\pi\mathrm{i} \nu_1} - b e^{2\pi\mathrm{i} \nu_2} - c e^{2 \pi \mathrm{i} (\nu_1 + \nu_2)}|^2}
  \, d\nu_1 \, d\nu_2
  \nonumber
  = \\ &=
  \int_{-1/2}^{1/2} \int_{-1/2}^{1/2}
  \frac{\sigma_\epsilon^2 \exp(2\pi\mathrm{i} (h_1 \nu_1 + h_2 \nu_2))}
  {g(\nu_1, \nu_2)}
  \, d\nu_1 \, d\nu_2,
  \label{eq:acovf-via-integral-expl}
\end{align}
with $g(\nu_1, \nu_2)$ defined in Lemma~\ref{lemma:denominator}.

For fixed $h_1$ and $h_2$, it is possible to compute the integral in \eqref{eq:acovf-via-integral-expl}
as a function of $a$, $b$ and $c$.
\subsection{The autocovariance function for $h_1 = 0$.}
\label{ss:h1eq0}
The denominator $g(\nu_1,\nu_2)$ in \eqref{eq:sdensX} is equal to
\begin{align*}
    g(\nu_1, \nu_2) &= 1 + a^2 + b^2 + c^2 + 2 (ac - b) \cos (2 \pi \nu_2)
    + \mbox{} \\ & \quad +
    2 ((bc - a) + (ab - c) \cos(2 \pi \nu_2)) \cos(2 \pi \nu_1)
    + \mbox{} \\ & \quad +
    2 (ab + c) \sin(2 \pi  \nu_1) \sin(2 \pi \nu_2)
    = \\ &=
    A + B \cos(2 \pi \nu_1) + C \sin(2 \pi \nu_1),
\end{align*}
  with
  \begin{equation}\label{eq:ABC-denom}
    \begin{gathered}[c]
      A = 1 + a^2 + b^2 + c^2 + 2 (ac - b) \cos (2 \pi \nu_2), \\
      B = 2 ((bc - a) + (ab - c) \cos(2 \pi \nu_2)), \qquad
      C = 2 (ab + c) \sin(2 \pi \nu_2)
    \end{gathered}
  \end{equation}

  We are going to use Lemma~\ref{lemma:integration}
  to calculate the integral $\int_{-1/2}^{1/2} g(\nu_1,\nu_2)^{-1} \, d\nu_1$.
  We have to verify the conditions $A>0$ and $A^2 - B^2 - C^2 > 0$:
\begin{equation}\label{eq:Ap-1}
  (1 + a^2 + b^2 + c^2)^2 - 4 (ac - b)^2 =
  ((a - c)^2 + (b+1)^2) ((a+c)^2 + (b-1)^2) \ge 0.
\end{equation}
The zero is attained if either $a=c$ and $b=-1$, or $a+c=0$ and $b=1$.
In both cases $D=0$, while $D>0$ according to  Proposition~\ref{prop:existance-eX-newps}
($D$ is defined in Lemma~\ref{lemma:denominator}).
Thus the inequality in \eqref{eq:Ap-1} is strict,
\begin{gather*}
  (1 + a^2 + b^2 + c^2)^2 > 4 (ac - b)^2, \\
  1 + a^2 + b^2 + c^2 > |2 (ac - b)| \ge -2 (ac - b) \cos(2 \pi \nu_2), \\
  A = 1 + a^2 + b^2 + c^2 + 2(ac-b) \cos(2\pi\nu_2) > 0 .
\end{gather*}
With some trigonometric transformations,
\[
  A^2 - B^2 - C^2 = (1 - a^2 + b^2 - c^2 - 2 (ac + b) \cos(2 \pi \nu_2) )^2.
\]
Again, with some transformations and notion that $D>0$,
\begin{gather*}
   (1 - a^2 + b^2 - c^2)^2 - 4 (ac + b)^2 = D > 0, \\
   |1 - a^2 + b^2 - c^2| > |2 (ac + b)| > 2 (a c + b) \cos(2 \pi \nu_2), \\
   1 - a^2 + b^2 - c^2 \neq 2 (a c + b) \cos(2 \pi \nu_2), \\
   A^2 - B^2 - C^2 = (1 - a^2 + b^2 - c^2 - 2 (ac + b) \cos(2 \pi \nu_2) )^2 > 0.
\end{gather*}
According to Lemma~\eqref{lemma:integration}, eq.~\eqref{eq:li-eq1},
\[
  \int_{-1/2}^{1/2} \frac{d \nu_1}{g(\nu_1,\nu_2)} = \frac{1}{\sqrt{A^2 - B^2 - C^2}}
  = \frac{1}{|1 - a^2 + b^2 - c^2 - 2 (ac + b) \cos(2 \pi \nu_2)|} .
\]

Denote
\[
  g_2(\nu_2) = 1 - a^2 + b^2 - c^2 - 2 (ac + b) \cos(2 \pi \nu_2)
\]
and compute $\int_{-1/2}^{1/2} e^{2\pi\mathrm{i}\nu_2 h_2} \,|g_2(\nu_2)|^{-1} \, d\nu_2$ using formula~\eqref{eq:li-eq1}.
The expression $g_2(\nu_2)$ does not attain $0$;
hence, either $g_2(\nu_2)>0$ for all $\nu_2$, or $g_2(\nu_2)<0$ for all $\nu_2$.
Then
\[
  |g_2(\nu_2)| = A_2  + B_2 \cos(2\pi\nu_2),
\]
where $A_2 = 1 - a^2 + b^2 - c^2$ and $B_2 = -2 (ac + b)$ if $g_2(\nu_2) > 0$ for all $\nu_2$;
otherwise, $A_2 = -(1 - a^2 + b^2 - c^2)$ and $B_2 = 2 (ac + b)$ if $g_2(\nu_2) < 0$ for all $\nu_2$.
In both cases, $A_2 = |g_2(\pi/2)| > 0$ and
\[
   A_2^2 - B_2^2 = (1 - a^2 + b^2 - c^2)^2 - 4 (ac + b)^2 = D > 0.
\]
According to Lemma~\ref{lemma:integration}, eq.~\eqref{eq:li-eq2},
\[
  \int_{-1/2}^{1/2} \frac{\exp(2\pi \mathrm{i} h_2 \nu_2)}{|g_2(\nu_2)|} \, d \nu_2
  =
  \frac{\beta^{|h_2|}}{\sqrt{A_2^2 - B_2^2}} = \frac{1}{\sqrt{D}}
\]
where
\[
  \beta = \frac{B_2}{A_2 + \sqrt{A_2^2 - B_2^2}} = \frac{B_2}{A_2 + \sqrt{D}} .
\]
The value of $\beta$ in
\begin{align*}
  \beta &= 0 \quad \mbox{if $ac + b = 0$}, \\
  \beta &= \frac{1 - a^2 + b^2 - c^2 - \sqrt{D}}{2 (ac+b)} \quad \mbox{if $ac+b \neq 0$ and $g_2(\nu_2)>0$ for all $\nu_2$}, \\
  \beta &= \frac{1 - a^2 + b^2 - c^2 + \sqrt{D}}{2 (ac+b)} \quad \mbox{if $ac+b \neq 0$ and $g_2(\nu_2)<0$ for all $\nu_2$};
\end{align*}
the formula that if valid in all cases is
\begin{equation}\label{eq:def-buni}
  \beta = \frac{2 (ac + b)}{1 - a^2 + b^2 - c^2 + \sign(1 - a^2 + b^2 - c^2) \sqrt{D}} .
\end{equation}

Finally,
\begin{align}
  \gamma_X(0,h_2) &= \int_{-1/2}^{1/2} \int_{-1/2}^{1/2} \frac{\sigma_\epsilon^2
  \exp(2 \pi \mathrm{i} h_2 \nu_2) }{g(\nu_1,\nu_2)}
  \, d\nu_1 \, d\nu_2
  = \nonumber \\ &=
  \int_{-1/2}^{1/2} \frac{\sigma_\epsilon^2
  \exp(2 \pi \mathrm{i} h_2 \nu_2) }{|g_2(\nu_2)|}
  \, d\nu_2
  = \frac{\beta^{|h_2|} \sigma_\epsilon^2}{\sqrt{D}} .
  \label{eq:gX0h2}
\end{align}
In particular,
\begin{equation}\label{eq:varXg00-g}
  \var X_{i,j} = \gamma_X(0, 0) = \frac{\sigma_\epsilon^2}{\sqrt{D}} .
\end{equation}

Due to symmetry,
\begin{equation}\label{eq:gXh10}
  \gamma_X(h_1,0) = \frac{\alpha^{|h_1|} \sigma_\epsilon^2}{\sqrt{D}},
\end{equation}
where
\begin{equation}\label{eq:def-auni}
  \alpha = \frac{2 (a + bc)}{1 + a^2 - b^2 - c^2 + \sign(1 + a^2 - b^2 - c^2) \sqrt{D}} .
\end{equation}

Notice that $|\alpha|<1$ and $|\beta|<1$, whence
\begin{equation}\label{neq:g00more}
  |\gamma_X(1,0)| < \gamma_X(0,0)
  \quad \mbox{and} \quad
  |\gamma_X(0,1)| < \gamma_X(0,0) .
\end{equation}
\subsection{Yule--Walker equations}
We formally write down Yule--Walker equations for the autocovariance function of $X$.
These equations are particular cases of ones given in \cite[Section~6]{Tjostheim:1978}:
\begin{align}
  \gamma_X(0,\,0) &= a \gamma_X(1,\,0) + b \gamma_X(0,\,1) + c\gamma_X(1,\,1) + \sigma_\epsilon^2,
    \label{eq:YW-0}\\
  \gamma_X(h_1,\,h_2) &= a \gamma_X(h_1-1,\, h_2) + b \gamma_X(h_1,\, h_2-1) + c \gamma_X(h_1-1,\, h_2-1) .
    \label{eq:YW-step}
\end{align}
In Lemma~\ref{lem:YW004} we obtain conditions for \eqref{eq:YW-step} to hold true.
As \eqref{eq:YW-0} requires specific conditions on coefficients $a$, $b$ and $c$,
we postpone the consideration of \eqref{eq:YW-0} to Section~\ref{sec:stability-causality}.
\begin{lemma}\label{lem:YW004}
  Let $X$ be a stationary field satisfying equation \eqref{eq:difeq1},
  and let $\gamma_X(h_1,h_2)$ be the covariance function of the process $X$.
  Then equality \eqref{eq:YW-step} holds true if in any of the following conditions
  hold true:
  \begin{enumerate}
    \def\labelenumi{(\arabic{enumi})}
    \item
      $h_1\ge 1$, {} $(1-a-b-c)(1+a-b+c)>0$ and $(1-a+b+c)(1+a+b-c)>0$,
    \item
      $h_1\le 0$, {} $(1-a-b-c)(1+a-b+c)<0$ and $(1-a+b+c)(1+a+b-c)<0$,
    \item
      $h_2\ge 1$, {} $(1-a-b-c)(1-a+b+c)>0$ and $(1+a-b+c)(1+a+b-c)>0$, or
    \item
      $h_2\le 0$, {} $(1-a-b-c)(1-a+b+c)<0$ and $(1+a-b+c)(1+a+b-c)<0$.
  \end{enumerate}
\end{lemma}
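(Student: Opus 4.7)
The plan is to rewrite the Yule--Walker difference as a single Fourier-type integral against $1/g_1$ and then show this integral vanishes via a one-sided Fourier expansion in whichever variable the hypothesis singles out. Let $g_1(\nu_1,\nu_2)=1-ae^{2\pi\mathrm{i}\nu_1}-be^{2\pi\mathrm{i}\nu_2}-ce^{2\pi\mathrm{i}(\nu_1+\nu_2)}$ as in the proof of Proposition~\ref{prop:existance-eX-newps}, so $g=g_1\overline{g_1}$. The trigonometric polynomial encoding the left-hand side of~\eqref{eq:YW-step} is precisely $\overline{g_1}$, cancelling one factor of $\overline{g_1}$ in $|g_1|^2$; then \eqref{eq:acovf-via-integral-expl} yields
\[
  \gamma_X(h_1,h_2)-a\gamma_X(h_1-1,h_2)-b\gamma_X(h_1,h_2-1)-c\gamma_X(h_1-1,h_2-1)
  = \sigma_\epsilon^2 \iint_{(-1/2,1/2]^2}\frac{e^{2\pi\mathrm{i}(h_1\nu_1+h_2\nu_2)}}{g_1(\nu_1,\nu_2)}\,d\nu_1\,d\nu_2.
\]
Proposition~\ref{prop:existance-eX-newps} forces $D>0$, so $g_1$ never vanishes on the torus and the integrand is bounded, which makes Fubini free.

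For cases~(3) and~(4) I would integrate $\nu_2$ first. Viewed as an affine function of $z=e^{2\pi\mathrm{i}\nu_2}$, $g_1=(1-ae^{2\pi\mathrm{i}\nu_1})-z(b+ce^{2\pi\mathrm{i}\nu_1})$ has unique root $z_0(\nu_1)=(1-ae^{2\pi\mathrm{i}\nu_1})/(b+ce^{2\pi\mathrm{i}\nu_1})$ off the measure-zero set of $\nu_1$ where the denominator vanishes. A short computation gives
\[
  |1-ae^{2\pi\mathrm{i}\nu_1}|^2-|b+ce^{2\pi\mathrm{i}\nu_1}|^2 = (1+a^2-b^2-c^2)-2(a+bc)\cos(2\pi\nu_1),
\]
and combined with the factorization $(1+a^2-b^2-c^2)^2-4(a+bc)^2=D$ and $D>0$ this shows that the two sign conditions of case~(3) are equivalent to $1+a^2-b^2-c^2>0$, and those of case~(4) to $1+a^2-b^2-c^2<0$; the first gives $|z_0(\nu_1)|>1$ uniformly in $\nu_1$, the second gives $|z_0(\nu_1)|<1$ uniformly in $\nu_1$.

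In case~(3) I would then expand
\[
  \frac{1}{g_1(\nu_1,\nu_2)} = \frac{1}{1-ae^{2\pi\mathrm{i}\nu_1}}\sum_{k=0}^{\infty}\frac{e^{2\pi\mathrm{i}k\nu_2}}{z_0(\nu_1)^k},
\]
an absolutely convergent Fourier series in $\nu_2$ supported on nonnegative frequencies; with $h_2\ge 1$, integrating against $e^{2\pi\mathrm{i}h_2\nu_2}$ extracts the coefficient at the negative frequency $-h_2$, which is $0$, and the double integral collapses. Case~(4) is the mirror image: $1/g_1$ expands in strictly negative powers of $z$, and for $h_2\le 0$ the inner integral extracts a nonnegative-frequency coefficient, again $0$. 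Cases~(1) and~(2) are the symmetric statements with the roles of the two variables swapped: view $g_1$ as affine in $w=e^{2\pi\mathrm{i}\nu_1}$ with root $w_0(\nu_2)=(1-be^{2\pi\mathrm{i}\nu_2})/(a+ce^{2\pi\mathrm{i}\nu_2})$, and use the analogous identity $(1-a^2+b^2-c^2)^2-4(ac+b)^2=D$ already recorded in Section~\ref{ss:h1eq0}.

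The main obstacle is purely algebraic bookkeeping: checking that each four-factor sign condition is exactly the criterion for $|z_0|$ (or $|w_0|$) to stay uniformly outside or inside the unit circle. Once one writes the paired products in each hypothesis as difference-of-squares factorizations $(1\mp a)^2-(b\pm c)^2$ and $(1\pm a)^2-(b\mp c)^2$, summing them to $2(1\pm a^2\mp b^2\pm c^2)$, the translation is transparent. The analytic step is then an elementary geometric expansion plus the observation that a one-sided expansion in $z$ has a one-sided Fourier spectrum in $\nu_2$.
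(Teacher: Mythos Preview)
Your proof is correct and follows essentially the same route as the paper: reduce the Yule--Walker difference to the Fourier integral of $1/g_1$ via the spectral density, translate the sign hypotheses into the moduli comparison $|1-be^{2\pi\mathrm{i}\nu_2}|\gtrless|a+ce^{2\pi\mathrm{i}\nu_2}|$ (respectively with the variables swapped), and conclude that the inner integral vanishes. The paper packages this last step as an appeal to Lemma~\ref{lem:int-oneie} (the residue computation~\eqref{eq:intetyui}) rather than writing out the geometric expansion, and it treats cases~(1)--(2) first with (3)--(4) by symmetry, but the argument is otherwise the same.
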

\begin{proof}
  According to Corollary~\ref{cor:hassectrdensity},
  the stationary field $X$ has a spectral density,
  that is \eqref{eq:def-sd-1} with $f_X(h_1,h_2)$ defined in \eqref{eq:sdensX}.
  Hence, as to rules how the Fourier transform changes when the function is linearly transformed,
  \begin{multline}\label{eq:ft-ltgamma}
    \gamma_X(h_1,\,h_2) - a \gamma_X(h_1-1, \, h_2) -
    b \gamma(h_1,\, h_2-1) - c \gamma(h_1-1,\, h_2-1)
    = \\
    \begin{aligned}[b]
      &=
      \int_{-1/2}^{1/2}
      \int_{-1/2}^{1/2}
      e^{2 \pi \mathrm{i} (h_1 \nu_1 + h_2 \nu_2)}
      (1 - a e^{- 2 \pi \mathrm{i} \nu_1}
         - b e^{- 2 \pi \mathrm{i} \nu_2}
         - c e^{- 2 \pi \mathrm{i} (\nu_1 + \nu_2)})
      \times \mbox{} \\ & \qquad \qquad \qquad \times
      f_X(h_1, h_2) \, d\nu_1 \, d\nu_2
      = \\ &=
      \int_{-1/2}^{1/2}
      \int_{-1/2}^{1/2}
      \frac{\exp(2\pi{\mathrm{i}} (h_1 \nu_1 + h_2 \nu_2)) \sigma_\epsilon^2}
      {1 - a e^{2 \pi \mathrm{i} \nu_1}
         - b e^{2 \pi \mathrm{i} \nu_2}
         - c e^{2 \pi \mathrm{i} (\nu_1 + \nu_2)}}
      \, d\nu_1 \, d\nu_2 .
    \end{aligned}
  \end{multline}

  \paragraph{Case (1):}
    $h_1\ge 1$, {} $(1-a-b-c)(1+a-b+c)>0$ and $(1-a+b+c)(1+a+b-c)>0$.
  In this case,
  \begin{gather*}
    (1 - b)^2 > (a + c)^2 \quad \mbox{and}  \quad (1+b)^2 > (a-c)^2, \\
    1 - a^2 + b^2 - c^2 > 2 b + 2 ac \quad \mbox{and} \quad 1 - a^2 + b^2 - c^2 > - 2 b - 2 a c, \\
    1 - a^2 + b^2 - c^2 > 2 \, | b + a c | .
  \end{gather*}
  Then, for every $\nu_2 \in [-1/2, \: 1/2]$, since $|\cos(2\pi \nu_2)| \le 1$,
  \begin{gather}
    1 - a^2 + b^2 - c^2 > 2 (b + a c) \cos(2 \pi \nu_2),
    \nonumber \\
    1 - 2 b \cos(2 \pi \nu_2) + b^2 > a^2 + 2 a c \cos(2 \pi \nu_2) + c^2,
    \nonumber \\
    |1 - b e^{2 \pi \mathrm{i} \nu_2}|^2 > |a + c e^{2 \pi \mathrm{i} \nu_2}|^2,
    \nonumber \\
    |1 - b e^{2 \pi \mathrm{i} \nu_2}| > |a + c e^{2 \pi \mathrm{i} \nu_2}|.
    \label{neq:cond_fpr_Lemma7}
  \end{gather}
  According to Lemma~\ref{lem:int-oneie},
  \begin{equation}\label{eq:1dinte0}
      \int_{-1/2}^{1/2}
      \frac{\exp(2\pi{\mathrm{i}} h_1 \nu_1)}
      {1 - a e^{2 \pi \mathrm{i} \nu_1}
         - b e^{2 \pi \mathrm{i} \nu_2}
         - c e^{2 \pi \mathrm{i} (\nu_1 + \nu_2)}}
      \, d\nu_1 = 0,
  \end{equation}
  which, with \eqref{eq:ft-ltgamma}, implies \eqref{eq:YW-step}.

  \paragraph{Case (2):}
    $h_1\le 0$, {} $(1-a-b-c)(1+a-b+c)<0$ and $(1-a+b+c)(1+a+b-c)<0$.
  In this case,
  \begin{gather*}
    (1 - b)^2 < (a + c)^2 \quad \mbox{and}  \quad (1+b)^2 < (a-c)^2, \\
    - 2 b - 2 ac < a^2 - b^2 + c^2 - 1 \quad \mbox{and} \quad 2 b + 2 a c < a^2 - b^2 + c^2 - 1, \\
    2 \, |b + a c| < a^2 - b^2 + c^2 - 1 .
  \end{gather*}
  Then, for every $\nu_2 \in [-1/2, \: 1/2]$, since $|\cos(2\pi \nu_2)| \le 1$,
  \begin{gather*}
    - 2 (b + a c) \cos(2 \pi \nu_2) \le 2 \,|b + ac| < a^2 - b^2 + c^2 - 1, \\
    1 - 2 b \cos(2 \pi \nu_2) + b^2 < a^2 + 2 a c \cos(2 \pi \nu_2) + c^2, \\
    |1 - b e^{2 \pi \mathrm{i} \nu_2}|^2 < |a + c e^{2 \pi \mathrm{i} \nu_2}|^2, \\
    |1 - b e^{2 \pi \mathrm{i} \nu_2}| < |a + c e^{2 \pi \mathrm{i} \nu_2}|.
  \end{gather*}
  Again, according to another case of Lemma~\ref{lem:int-oneie},
  \eqref{eq:1dinte0} still holds true.
  With \eqref{eq:ft-ltgamma}, it implies \eqref{eq:YW-step}.

 \paragraph{Cases (3) and (4)} are symmetric to cases (1) and (2), respectively.
\end{proof}

\subsection{Uniqueness of the solution}
We prove that under \textit{Sufficient\/} conditions of Proposition~\ref{prop:existance-eX-alt},
the stationary solution to \eqref{eq:difeq1} is unique.
\begin{proposition}\label{prop:uniquness-eX}
  Let $\sigma^2>0$, $a$, $b$ and $c$ be fixed real numbers,
  and $\epsilon_{i,j}$
  be a collection of uncorrelated random variables with zero mean and equal variance $\sigma^2$.
  If $D>0$, then equation \eqref{eq:difeq1}
  has only one stationary solution $X$,
  namely one defined by \eqref{eq:def-X-4ser} with coefficients $\psi_{k,l}$
  defined in \eqref{eq:psi-as-int}.
  The coefficients $\psi_{k,l}$ satisfy \eqref{eq:psi-summable-662}; hence, the double series in
  \eqref{eq:def-X-4ser} converges in least squares and almost surely.
\end{proposition}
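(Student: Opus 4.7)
The plan is to reduce uniqueness to showing that every wide-sense stationary solution of the \emph{homogeneous} equation $Y - a L_x Y - b L_y Y - c L_x L_y Y = 0$ must vanish, and to carry out that reduction by subtracting the explicit solution from \eqref{eq:def-X-4ser}. Existence of the explicit solution is already furnished by the sufficiency part of Proposition~\ref{prop:existance-eX-alt}; the summability \eqref{eq:psi-summable-662} is noted there. The hypothesis $D>0$ is crucial because, by Lemma~\ref{lemma:denominator}, it implies $g(\nu_1,\nu_2)>0$ for all $(\nu_1,\nu_2)\in(-\tfrac12,\tfrac12]^2$.

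Concretely, let $X$ be an arbitrary wide-sense stationary solution to \eqref{eq:difeq1} for the given noise $\epsilon$, and let
\[
  \tilde{X}_{i,j} = \sum_{k,l\in\mathbb{Z}} \psi_{k,l}\,\epsilon_{i-k,j-l}
\]
be the field constructed from the same $\epsilon$ via \eqref{eq:psi-as-int}--\eqref{eq:def-X-4ser}. I first argue that $X$ and $\tilde X$ are jointly wide-sense stationary: since $\epsilon_{i,j}=X_{i,j}-aX_{i-1,j}-bX_{i,j-1}-cX_{i-1,j-1}$ is a fixed finite linear combination of shifts of $X$, joint WSS of $(X,\epsilon)$ follows from WSS of $X$; then $\tilde X$, being a mean-square convergent linear combination of shifts of $\epsilon$ with absolutely summable coefficients, is jointly WSS with $\epsilon$, hence with $X$. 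Consequently $Y_{i,j}:=X_{i,j}-\tilde X_{i,j}$ is wide-sense stationary.

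Next, applying $I-aL_x-bL_y-cL_xL_y$ to $Y$ and using that both $X$ and $\tilde X$ solve \eqref{eq:difeq1} with the same right-hand side $\epsilon$ yields
\[
  Y_{i,j} - a Y_{i-1,j} - b Y_{i,j-1} - c Y_{i-1,j-1} = 0 \qquad \text{for all } i,j\in\mathbb{Z}.
\]
Let $\lambda_Y$ denote the spectral measure of the WSS field $Y$. Passing the above identity through the spectral representation, the spectral measure of the left-hand side equals $g(\nu_1,\nu_2)\,d\lambda_Y$, where $g$ is the denominator from Lemma~\ref{lemma:denominator}. Since the left-hand side is identically zero, its variance is zero, so
\[
  \int_{(-1/2,1/2]^2} g(\nu_1,\nu_2)\,d\lambda_Y(\nu_1,\nu_2) = 0.
\]
By Lemma~\ref{lemma:denominator}, the hypothesis $D>0$ forces $g>0$ everywhere on $(-\tfrac12,\tfrac12]^2$, so $\lambda_Y$ is the zero measure. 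Hence $\operatorname{var} Y_{i,j}=\lambda_Y((-\tfrac12,\tfrac12]^2)=0$ for every $(i,j)$, which gives $Y_{i,j}=\ME Y_{i,j}$ almost surely; taking expectation in the homogeneous equation and using $1-a-b-c\neq 0$ (again from $D>0$) shows $\ME Y_{i,j}=0$, so $X_{i,j}=\tilde X_{i,j}$ almost surely for every $(i,j)$.

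The main obstacle I expect is the bookkeeping around joint wide-sense stationarity of $X$ and $\tilde X$, since the problem statement only postulates WSS of $X$ separately from the prescribed $\epsilon$; once that is handled, the spectral step is short because Lemma~\ref{lemma:denominator} gives the strict positivity of $g$ on the whole torus under $D>0$, and the rest is immediate. The centering step at the end is a one-line consequence of $1-a-b-c\neq 0$.
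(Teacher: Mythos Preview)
Your argument is correct, and it takes a genuinely different route from the paper. The paper works inside the Hilbert space $\mathcal{H}$ generated by shifts of $X$, constructs the orthogonal projector $P_1$ onto the subspace $\mathcal{H}_1$ generated by shifts of $\epsilon$, observes that $P_1 X$ is again a stationary solution, and then uses the explicit variance formula $\var X_{i,j}=\sigma_\epsilon^2/\sqrt{D}$ (derived separately in Section~\ref{sec:theacf}) to conclude $\|X\|=\|P_1 X\|$, hence $X=P_1 X$; finally it identifies $P_1 X$ with the series \eqref{eq:def-X-4ser} via \eqref{eq:eXs2psi}. Your approach bypasses the projector and the variance computation entirely: you subtract the explicit solution, reduce to the homogeneous equation, and kill the difference by the spectral-measure identity $\int g\,d\lambda_Y=0$ together with strict positivity of $g$ from Lemma~\ref{lemma:denominator}. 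Your route is the standard ARMA-uniqueness template and is shorter and more self-contained, since it does not rely on the integral evaluation behind \eqref{eq:varXg00-g}; the paper's route, on the other hand, makes explicit that any stationary solution already lives in the closed span of the noise, which is a slightly stronger structural statement and feeds directly into the computation of the coefficients via \eqref{eq:eXs2psi}. Your handling of the joint wide-sense stationarity of $(X,\tilde X)$ is fine: since $\epsilon$ is a finite linear combination of shifts of $X$ and $\tilde X$ is an $L^2$-limit of finite linear combinations of shifts of $\epsilon$, both live in the closed linear span of shifts of $X$, and shift-invariance of second moments propagates.
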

\begin{proof}
  It remains to prove the uniqueness of the solution, as other assertions
  follow from the proof of Proposition~\ref{prop:existance-eX-alt}.

  Let $X$ be a solution to \eqref{eq:difeq1}, which can be rewritten as \eqref{eq:difeq1a}
  Let $\mathcal{H}$ be the Hilbert space spanned by $L_x^k L_y^l$, $k,l=\ldots,-1,0,1,\ldots$
  with scalar product
  \[
    \langle Y^{(1)}, Y^{(2)} \rangle = \ME Y^{(1)}_{0,0} Y^{(2)}_{0,0} .
  \]
  Formally, $\mathcal{H}$ can be defined as the closure of the set of random fields
  of form $\{\sum_{k=-n}^n \sum_{l=-n}^{n} \phi_{k,l} X_{i-k, j-l},\;
  \allowbreak i,j=\ldots,-1,0,1,\ldots\}$
  in a Banach space of bound\-ed-second-moment random fields with norm
  \[
    \|Y\| = \sup_{i,j} (\ME Y_{i,j}^2)^{1/2}.
  \]
  All random fields within $\mathcal{H}$ are jointly stationary, which imply that the scalar product indeed
  corresponds to the norm of the Banach space:
  \[
    \langle Y, Y\rangle = \|Y\|^2 \quad \mbox{for all} \quad Y\in\mathcal{H} .
  \]

  Since \eqref{eq:difeq1a}, $\epsilon \in \mathcal{H}$.
  Let us construct a similar space for $\epsilon$. Let
  \begin{align*}
    \mathcal{H}_{1} &= \left\{
      \sum_{k=-\infty}^\infty \sum_{l=-\infty}^\infty
      \theta_{k,l} L_x^k L_y^k \epsilon :
      \sum_{k=-\infty}^\infty \sum_{l=-\infty}^\infty \theta_{k,l}^2 < \infty
      \right\}
    = \\ &= \left\{
      \left\{
      \sum_{k=-\infty}^\infty \sum_{l=-\infty}^\infty
      \theta_{k,l} \epsilon_{i-k,j-l},\,
      i,j=\ldots,-1,0,1,\ldots\! \right\}
      :
      \sum_{k=-\infty}^\infty \sum_{l=-\infty}^\infty \theta_{k,l}^2 < \infty\!
      \right\}
  \end{align*}
  The series is convergent, see Proposition~\ref{prop:unconditional-double-series}.

  The random fields $\sigma_\epsilon^{-1} L_x^k L_y^k \epsilon$
  make an orthonormal basis of the subspace $\mathcal{H}_1$.
  The orthogonal projector onto $\mathcal{H}_1$ can be defined as follows
  \begin{gather*}
    P_1 Y = \frac{1}{\sigma_\epsilon^2} \sum_{k=-\infty}^\infty \sum_{l=-\infty}^\infty
    (\ME Y_{k,l} \epsilon_{0,0}) L^k L^l \epsilon,
    \\
    (P_1 Y)_{i,j} = \frac{1}{\sigma_\epsilon^2} \sum_{k=-\infty}^\infty \sum_{l=-\infty}^\infty
    \ME(Y_{k,l} \epsilon_{0,0}) \epsilon_{i-k,j-l}
  \end{gather*}
  for all $Y\in\mathcal{H}$.

  It is possible to verify that
  \[
    P_1 L_x = L_x P_1, \qquad
    P_1 L_y = L_y P_1, \qquad
    P_1 \epsilon = \epsilon .
  \]
  Applying the operator $P_1$ to both sides of \eqref{eq:difeq1a}, we get
  that the random field $P_1 X$ is also a stationary solution to \eqref{eq:difeq1}.
  Both random fields must be centered, and variances of both the fields satisfy \eqref{eq:varXg00-g}:
  \[
    \|X\|^2 = \|P_1 X\| = \frac{\sigma_\epsilon^2}{\sqrt{D}} .
  \]
  This implies that $\|X - P_1 X\| = 0$, which means $X = P_1 X$ almost surely.

  Combining \eqref{eq:psi-summable-662} and \eqref{eq:ft-ltgamma}, we get
  \begin{gather}
    \gamma_X(-k, -l) - a \gamma_X(-k{-}1, -l) -
    b \gamma_X(-k, -l{-}1) - c \gamma_X(-k{-}1, -l{-}1) = \sigma_\epsilon^2 \psi_{k,l}, 
    \nonumber \\
    \ME X_{0,0} X_{k,l} - a \ME X_{-1,0} X_{k,l} - b \ME X_{0,-1} X_{k,l} - c \ME X_{-1,-1}  X_{k,l} = \sigma_\epsilon^2 \psi_{k,l}, 
    \nonumber \\
    \ME (X_{0,0} - a X_{-1,0} - b X_{0,-1} - c X_{-1,-1}) X_{k,l} = \sigma_\epsilon^2 \psi_{k,l},
    \nonumber \\
    \ME \epsilon_{0,0} X_{k,l} = \sigma_\epsilon^2 \psi_{k,l},
    \label{eq:eXs2psi} \\
    X_{i,j} = P X_{i,j} = \frac{1}{\sigma_\epsilon^2}
            \sum_{k=-\infty}^\infty \sum_{l=-\infty}^\infty
            (\ME X_{k,l} \epsilon_{0,0}) \epsilon_{i-k,j-l}
            =
            \sum_{k=-\infty}^\infty \sum_{l=-\infty}^\infty
            \psi_{k,l} \epsilon_{i-k,j-l}.
    \nonumber
  \end{gather}
  Thus, the solution to \eqref{eq:difeq1} must satisfy \eqref{eq:def-X-4ser} almost surely,
  whence the uniqueness follows.
\end{proof}

\section{Stability and causality}
\label{sec:stability-causality}
\subsection{Causality}
We generalize the causality condition for random fields on a two-dimensional lattice.
\begin{definition}
  Let $\epsilon_{i,j}$  ($i$ and $j$ integers) be random variables
  of zero mean and constant, finite, non-zero variance.
  A random field $\{X_{i,j}, \; i,j=\ldots,-1,0,1,\ldots\}$
  is said to be \textit{causal} with respect to (w.r.t.\@)
  the white noise $\epsilon$ if
  there exists an array of coefficients $\{\psi_{k,l},\; k,l=0,1,2,\ldots\}$
  such that
  \begin{equation}\label{neq:causal_abscon}
    \sum_{k=0}^\infty \sum_{l=0}^\infty |\psi_{k,l}| < \infty
  \end{equation}
  and the process $X$
  allows the representation
  \begin{equation}\label{eq:causal_eq}
    X_{i,j} = \sum_{k=0}^\infty \sum_{l=0}^\infty \psi_{k,l} \epsilon_{i-k, j-l}.
  \end{equation}
\end{definition}
A causal random field must be wide-sense stationary.

\begin{proposition}\label{prop:withpsikl-causal}
  Let $\epsilon_{i,j}$
  be a uncorrelated zero-mean equal-variance variables,
  $\ME \epsilon_{i,j}=0$ and $\ME \epsilon_{i,j}^2 = \sigma_\epsilon^2$, {}
  $0 < \sigma_\epsilon^2 < \infty$
  Let $X$ be a stationary field that satisfies \eqref{eq:difeq1}.
  The random field $X$ is \textit{causal\/} w.r.t.\ the white noise $\epsilon$
  if and only if these four inequalities hold true:
  \begin{align*}
    1 - a - b - c &> 0, &  1 - a + b + c &> 0, \\
    1 + a - b + c &> 0, &  1 + a + b - c &> 0.
  \end{align*}
  If the field $X$ is causal, then the coefficients in representation \eqref{eq:causal_eq}
  equal
  \begin{equation}\label{eq:psikl-causal}
    \psi_{k,l}
    = \sum_{m=0}^{\min(k, l)}
      \binom{k}{m} \binom{l}{m}
      a^{k-m}  b^{l-m}  (ab + c)^m .
  \end{equation}
\end{proposition}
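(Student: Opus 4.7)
The plan is to recast the causality question in terms of the polynomial $P(z_1,z_2)=1-a z_1-b z_2-c z_1 z_2$ and the generating function $\Psi(z_1,z_2)=\sum_{k,l\ge 0}\psi_{k,l}z_1^k z_2^l$, both regarded on the closed bidisk $\overline{\mathbb{D}}^{\,2}=\{(z_1,z_2):|z_1|\le 1,\,|z_2|\le 1\}$. Substituting \eqref{eq:causal_eq} into \eqref{eq:difeq1} and matching the coefficient of $\epsilon_{i-k,j-l}$ shows that the existence of an absolutely summable causal representation is equivalent to the formal identity $P\cdot\Psi\equiv 1$; unfolded, this is $\psi_{0,0}=1$ together with $\psi_{k,l}=a\psi_{k-1,l}+b\psi_{k,l-1}+c\psi_{k-1,l-1}$ for $(k,l)\ne(0,0)$ (with the convention $\psi_{k,l}=0$ when $k<0$ or $l<0$).

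\emph{Necessity.} If $X$ is causal then absolute summability of $\psi_{k,l}$ makes $\Psi$ continuous on $\overline{\mathbb{D}}^{\,2}$, so $P\Psi=1$ extends from the interior to the closure by continuity; hence $P$ is nonvanishing on $\overline{\mathbb{D}}^{\,2}$. Since $P$ is real-valued and continuous on the connected square $[-1,1]^{2}$ with $P(0,0)=1>0$, we have $P>0$ throughout; evaluating at the four corners $(\pm 1,\pm 1)$ gives the four claimed inequalities.

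\emph{Sufficiency.} The four inequalities first imply $|a|,|b|,|c|<1$ (by pairwise sums, e.g.\ $(1-a-b-c)+(1+a+b-c)=2(1-c)$). The crux is to show $P\ne 0$ on $\overline{\mathbb{D}}^{\,2}$. Write $P(z_1,z_2)=(1-a z_1)-(b+c z_1)z_2$; since $|a|<1$, the ratio $h(z_1)=(b+c z_1)/(1-a z_1)$ is analytic on a neighborhood of $\overline{\mathbb{D}}$, and the nonvanishing of $P$ on $\overline{\mathbb{D}}^{\,2}$ amounts to $|h|<1$ on $\overline{\mathbb{D}}$, which by the maximum modulus principle reduces to the unit circle. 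There it becomes $1+a^2-b^2-c^2>2(a+bc)\cos\theta$ for every $\theta$, i.e.\ $1+a^2-b^2-c^2>2|a+bc|$. Combining the identity
\[
  D=\bigl((1-a)^2-(b+c)^2\bigr)\bigl((1+a)^2-(b-c)^2\bigr)=(1+a^2-b^2-c^2)^{2}-4(a+bc)^{2}
\]
with the factorizations $(1-a)^{2}-(b+c)^{2}=(1-a-b-c)(1-a+b+c)$ and $(1+a)^{2}-(b-c)^{2}=(1+a-b+c)(1+a+b-c)$, the four inequalities make both bracketed factors positive; hence $D>0$ and (by their sum) $1+a^2-b^2-c^2>0$, which together give the torus estimate. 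Then $1/P$ is analytic on a neighborhood of $\overline{\mathbb{D}}^{\,2}$, and its Taylor expansion $\sum_{k,l\ge 0}\psi_{k,l}z_1^k z_2^l$ has absolutely summable coefficients. The field $\tilde X_{i,j}=\sum_{k,l\ge 0}\psi_{k,l}\epsilon_{i-k,j-l}$ satisfies \eqref{eq:difeq1} via the recurrence, and by Proposition~\ref{prop:uniquness-eX} coincides with $X$, which is thus causal.

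\emph{Explicit formula and the main obstacle.} Use the factorization $P=(1-a z_1)(1-b z_2)-(ab+c)z_1 z_2$ to write
\[
  \frac{1}{P(z_1,z_2)}=\sum_{m=0}^{\infty}\frac{(ab+c)^{m}z_1^{m}z_2^{m}}{\bigl((1-a z_1)(1-b z_2)\bigr)^{m+1}},
\]
and substitute $z_1^{m}(1-a z_1)^{-(m+1)}=\sum_{k\ge m}\binom{k}{m}a^{k-m}z_1^{k}$ with its analogue in $z_2$; collecting the coefficient of $z_1^{k}z_2^{l}$ yields \eqref{eq:psikl-causal}. The main obstacle is the algebraic bookkeeping in the sufficiency step: one must derive not just $D>0$ (which admits several sign patterns of the four linear factors) but the additional sign condition $1+a^2-b^2-c^2>0$, thereby pinning down the ``causal sheet'' of $\{D>0\}$ on which the maximum-modulus reduction yields a strict torus estimate.
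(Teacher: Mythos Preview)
Your proof is correct and takes a genuinely different route from the paper's. The paper works entirely through the Fourier-integral representation $\psi_{k,l}=\int\!\!\int e^{-2\pi\mathrm{i}(k\nu_1+l\nu_2)}/g_1(\nu_1,\nu_2)\,d\nu_1\,d\nu_2$: for sufficiency it invokes Proposition~\ref{prop:uniquness-eX} (which already furnishes a two-sided absolutely summable MA representation) and then shows $\psi_{k,l}=0$ for $k<0$ or $l<0$ by evaluating the inner integral via Lemma~\ref{lem:int-oneie}; for necessity it argues by contradiction, using case~(2) of Lemma~\ref{lem:YW004} to show that $f_1f_3<0$ would force $\psi_{k,l}=0$ for all $k,l\ge 0$ and hence $X\equiv 0$; and for the explicit formula it appeals to the contour-integral Lemma~\ref{lem:int-oneie2}. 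You instead work directly with the generating function $\Psi$ and the polynomial $P$ on the closed bidisk: necessity is the one-line observation that $P\Psi=1$ on $\overline{\mathbb{D}}^{\,2}$ forces $P>0$ on $[-1,1]^2$ and hence at the four corners; sufficiency is a maximum-modulus reduction to the torus, where the algebraic identity $D=(1+a^2-b^2-c^2)^2-4(a+bc)^2$ together with positivity of both factors $f_1f_2$ and $f_3f_4$ pins down the correct sheet; and the explicit formula drops out of the factorisation $P=(1-az_1)(1-bz_2)-(ab+c)z_1z_2$ and a geometric-plus-negative-binomial expansion. Your approach is more self-contained (it does not need the integral Lemmas~\ref{lem:int-oneie}--\ref{lem:int-oneie2}) and makes the role of the bidisk geometry transparent; the paper's approach, on the other hand, integrates more seamlessly with the spectral machinery used elsewhere (e.g.\ Lemma~\ref{lem:YW004} and the autocovariance computations), so the integral lemmas it relies on are not wasted effort.
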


\begin{proof}
  \textit{Sufficiency.}
  According to Proposition~\ref{prop:uniquness-eX}, 
  the field $X$ admits a representation \eqref{eq:def-X-4ser}
  with coefficients $\psi_{k,l}$ satisfying \eqref{neq:causal_abscon}.
  Now prove that $\psi_{k,l} = 0$ if $k<0$ or $l<0$.
  
  Let $k<0$.
  Let us look into the proof of Lemma~\ref{lem:YW004},
  case (1), for $h_1 = -k$.
  Eq.~\eqref{eq:1dinte0} holds true,
  \[
      \int_{-1/2}^{1/2}
      \frac{\exp(-2\pi{\mathrm{i}} k \nu_1)}
      {1 - a e^{2 \pi \mathrm{i} \nu_1}
         - b e^{2 \pi \mathrm{i} \nu_2}
         - c e^{2 \pi \mathrm{i} (\nu_1 + \nu_2)}}
      \, d\nu_1 = 0,
  \]
  whence $\psi_{k,l}$ defined in \eqref{eq:psi-as-int} is zero,
  \[ 
    \psi_{k,l}
    =
      \int_{-1/2}^{1/2} e^{- 2 \pi \mathrm{i} l \nu_2}
      \biggl( \int_{-1/2}^{1/2}
      \frac{\exp(-2 \pi \mathrm{i} k \nu_1 )}
      {1 - a e^{2\pi \mathrm{i} \nu_1}
         - b e^{2\pi \mathrm{i} \nu_2} - c e^{2\pi \mathrm{i} (\nu_1+\nu_2)}}
      \, d\nu_1 \biggr) \,  d\nu_2 = 0 \,.
  \]

  The case $l<0$ is symmetric to $k<0$; if $l<0$, then  $\psi_{k,l} = 0$ also holds true. Thus,
  \eqref{eq:def-X-4ser} rewrites as \eqref{eq:causal_eq}.

  \textit{Necessity.}
  Denote
  \begin{equation}\label{eq:deff1..f4}
    f_1 \mathbin{=} 1 - a - b - c, \quad
    f_2 \mathbin{=} 1 - a + b + c, \quad
    f_3 \mathbin{=} 1 + a - b + c, \quad
    f_4 \mathbin{=} 1 + a + b - c.
  \end{equation}
  Obviously, $f_1 + f_2 + f_3 + f_4 = 4 > 0$,
  and due to Proposition~\ref{prop:existance-eX-newps},
  $D = f_1 f_2 f_3 f_4 > 0$.

  Now prove that $f_1 f_3 > 0$. Equality $f_1 f_3 = 0$ is impossible. Let $f_1 f_3 < 0$.
  Then $f_2 f_4 < 0$.
  The representation \eqref{eq:def-X-4ser} is unique: the coefficients $\psi_{k,l}$ must be equal
  to $\sigma_\epsilon^{-2} \cov(X_{0,0},\epsilon_{-k,-l})$.
  Thus, coefficients in \eqref{eq:causal_eq} must be defined by \eqref{eq:psi-as-int}.
  Prove that $\psi_{k,l}=0$ for all integers $k>0$ and $l>0$.
  Look into the proof of Lemma~\ref{lem:YW004},
  case (2), for $h_1 = -k$. Eq.~\eqref{eq:1dinte0} still holds true
  and implies $\psi_{k,l}=0$ the same way as in the proof of \textit{sufficiency}.
  The representation \eqref{eq:causal_eq} rewrites as $X_{i,j} = 0$.
  The assumption $f_1 f_3 < 0$ have led to a contradiction.
  Thus, $f_1 f_3 > 0$.

  The inequality $f_1 f_2 > 0$ holds true due to symmetry.
  Inequalities $f_1 f_2 > 0$, $f_1 f_3 > 0$, $f_1 f_2 f_3 f_4> 0$ and $f_1 + f_2 + f_3 + f_4 > 0$
  imply that $f>0$, $f_2>0$, $f_3>0$ and $f_4>0$.

  \textit{The expression for $\psi_{k,l}$.}
  Again, if $f_1>0$, $f_2>0$, $f_3>0$ and $f_4>0$,
  in the proof of Lemma~\ref{lem:YW004}, case (1), inequality \eqref{neq:cond_fpr_Lemma7} was demonstrated. 
  Let $k$ and $l$ be nonnegative integers.
  By \eqref{eq:psi-as-int} and Lemmas \ref{lem:int-oneie} and \ref{lem:int-oneie2},
  \begin{align*}
    \psi_{k,l}
    &=
      \int_{-1/2}^{1/2} e^{- 2 \pi \mathrm{i} l \nu_2}
      \biggl( \int_{-1/2}^{1/2}
      \frac{\exp(-2 \pi \mathrm{i} k \nu_1 )}
      {1 - a e^{2\pi \mathrm{i} \nu_1}
         - b e^{2\pi \mathrm{i} \nu_2} - c e^{2\pi \mathrm{i} (\nu_1+\nu_2)}}
      \, d\nu_1 \biggr) \,  d\nu_2  
    = \\ &=
      \int_{-1/2}^{1/2}
      \frac{ e^{- 2 \pi \mathrm{i} l \nu_2} (a + c e^{2 \pi \mathrm{i} \nu_2})^k }{(1 - b e^{2\pi \mathrm{i} \nu_2})^{k+1}}
      \, d\nu_2
    = \\ &=
      \sum_{n=0}^{\min(k,l)} \binom{k}{n} \binom{l}{n} a^{k-n} b^{l-n} (ab + c)^n .
      \qedhere
  \end{align*}
\end{proof}

\subsection{Autocovariance function and Yule--Walker equations under the causality condition}
\begin{proposition}\label{prop:YW-nscond}
  Let coefficients $a$, $b$ and $c$ be such that $f_1>0$, $f_2>0$, $f_3>0$ and $f_4>0$
  for $f_1,\ldots,f_4$ defined in \eqref{eq:deff1..f4}.
  Let $X$ be a stationary field and $\epsilon$ be a collection of zero-mean equal-variance
  random variables, $\var \epsilon_{i,j} =\sigma_\epsilon^2$,  that satisfy \eqref{eq:difeq1}.
  Then $\cov(X_{i,j}, \epsilon_{k,l}) = 0$ if $k>i$ or $l > j$, and the following Yule--Walker
  equations hold true:
  \begin{align*}
    \gamma_X(0,0) &=  a \gamma_X(1,0) + b \gamma_X(0,1) + c \gamma_X(1,1) + \sigma_\epsilon^2, \\
    \gamma_X(h_1,h_2) &=  a \gamma_X(h_1{-}1,h_2) + b \gamma_X(h_1,h_2{-}1) + c \gamma_X(h_1{-}1,h_2{-}1)
  \end{align*}
  if $\max(h_1,h_2) > 0$.
\end{proposition}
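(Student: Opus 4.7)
The plan is to exploit the explicit causal representation provided by Proposition~\ref{prop:withpsikl-causal}. Under the assumption $f_1,f_2,f_3,f_4>0$, that proposition yields the absolutely convergent expansion
\[
  X_{i,j} = \sum_{k=0}^{\infty}\sum_{l=0}^{\infty}\psi_{k,l}\,\epsilon_{i-k,j-l},
\]
and, by formula \eqref{eq:psikl-causal} evaluated at $k=l=0$, $\psi_{0,0}=1$.

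First I would establish the orthogonality claim by computing
\[
  \cov(X_{i,j},\epsilon_{k,l}) = \sum_{m=0}^{\infty}\sum_{n=0}^{\infty}\psi_{m,n}\,\cov(\epsilon_{i-m,j-n},\epsilon_{k,l}).
\]
Since $\epsilon$ is uncorrelated with common variance $\sigma_\epsilon^2$, only the pair $(m,n)=(i-k,\,j-l)$ contributes, and that pair lies in the summation range only when $k\le i$ and $l\le j$. Hence $\cov(X_{i,j},\epsilon_{k,l})$ vanishes whenever $k>i$ or $l>j$, and equals $\sigma_\epsilon^2\psi_{i-k,j-l}$ otherwise. In particular, $\cov(X_{0,0},\epsilon_{0,0})=\sigma_\epsilon^2\psi_{0,0}=\sigma_\epsilon^2$.

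For the Yule--Walker equations I would apply \eqref{eq:difeq1} at indices $(h_1,h_2)$ inside the first argument of $\cov(X_{h_1,h_2},X_{0,0})=\gamma_X(h_1,h_2)$, using stationarity to identify the three shifted covariances with $\gamma_X(h_1-1,h_2)$, $\gamma_X(h_1,h_2-1)$ and $\gamma_X(h_1-1,h_2-1)$, respectively. This produces
\begin{align*}
  \gamma_X(h_1,h_2) &= a\gamma_X(h_1-1,h_2) + b\gamma_X(h_1,h_2-1) \\
  &\quad + c\gamma_X(h_1-1,h_2-1) + \cov(\epsilon_{h_1,h_2},X_{0,0}).
\end{align*}
By the orthogonality claim with $(i,j)=(0,0)$ and $(k,l)=(h_1,h_2)$, the residual $\cov(\epsilon_{h_1,h_2},X_{0,0})$ is zero whenever $\max(h_1,h_2)>0$, yielding the homogeneous Yule--Walker equation; whereas for $(h_1,h_2)=(0,0)$ the same computation yields $\sigma_\epsilon^2$, producing the inhomogeneous equation.

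There is no substantial obstacle: the causality has already been fully exploited in Proposition~\ref{prop:withpsikl-causal}, and the remainder is a routine covariance calculation. The only point requiring mild care is the justification of exchanging covariance with the infinite double sum, which is standard given the absolute summability \eqref{neq:causal_abscon} of $(\psi_{k,l})$ and the uniform bound on $\var\epsilon_{i,j}$.
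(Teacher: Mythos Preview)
Your proof is correct. For the orthogonality claim and for the equation at $(0,0)$ you proceed essentially as the paper does, using the causal expansion from Proposition~\ref{prop:withpsikl-causal}. Where you differ is in the homogeneous Yule--Walker equation for $\max(h_1,h_2)>0$: the paper appeals to Lemma~\ref{lem:YW004}, which establishes \eqref{eq:YW-step} via a spectral-integral computation, whereas you obtain it directly from the recursion \eqref{eq:difeq1} together with the orthogonality $\cov(\epsilon_{h_1,h_2},X_{0,0})=0$ that you have just proved. Your route is more self-contained here, since it never leaves the covariance level; the detour through Lemma~\ref{lem:YW004} is unnecessary once causality is available. One small point to make explicit: at $(h_1,h_2)=(0,0)$ your expansion yields $\gamma_X(-1,0)$, $\gamma_X(0,-1)$, $\gamma_X(-1,-1)$ on the right-hand side, so matching the stated form uses the evenness $\gamma_X(h_1,h_2)=\gamma_X(-h_1,-h_2)$; the paper sidesteps this by expanding $\cov(X_{i,j},X_{i,j})$ in the second argument rather than the first.
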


\begin{proof}
  Because of causality and representation \eqref{eq:causal_eq},
  \[
    \cov(X_{i,j},\epsilon_{k,l}) = \sum_{s=0}^\infty \sum_{r=0}^\infty \psi_{r,s} \cov(\epsilon_{i-r,j-s} \epsilon_{k,l}) .
  \]
  If $k>i$, then $i-r<k$ in all summands; if $l>j$, then $j-s<l$.
  In ether case, $\cov(\epsilon_{i-r,j-s} \epsilon_{k,l}) = 0$ is all terms. Hence,
  $\cov(X_{i,j},\epsilon_{k,l}) = 0$.

  Eq.~\eqref{eq:YW-step} for $h_1>0$ or $h_2>0$ follows from Lemma~\ref{lem:YW004}, case (1) and case(3), respectively.

  Because of \eqref{eq:difeq1},
  \begin{gather*}
    \cov(X_{i,j}, \epsilon_{i,j}) = \cov(a X_{i-1,j} + b X_{i,j-1} + c X_{i-1,j-1} + \epsilon_{i,j}, \: \epsilon_{i,j}), \\
    \begin{aligned}
      \cov(X_{i,j}, \epsilon_{i,j}) &= a \cov(X_{i-1,j}, \epsilon_{i,j}) + b \cov(X_{i,j-1},\epsilon_{i,j})
       + \mbox{} \\ & \quad +
       c \cov(X_{i-1,j-1},\epsilon_{i-1,j-1}) + \cov(\epsilon_{i,j}, \epsilon_{i,j}),
    \end{aligned} \\
    \cov(X_{i,j}, \epsilon_{i,j}) = \sigma_\epsilon^2. 
  \end{gather*}
  Then
  \begin{gather*}
    \cov(X_{i,j}, X_{i,j}) = \cov(X_{i,j}, \: a X_{i-1,j} + b X_{i,j-1} + c X_{i-1,j-1} + \epsilon_{i,j}), \\
    \begin{aligned}
      \cov(X_{i,j}, X_{i,j}) &= a \cov(X_{i,j},X_{i-1,j}) + b \cov(X_{i,j},X_{i,j-1})
      + \mbox{} \\ & \quad +
      c \cov(X_{i,j} X_{i-1,j-1}) +
        \cov(X_{i,j},\epsilon_{i,j}),
    \end{aligned} \\
    \gamma_X(0, 0) = a \gamma_X(1, 0) + b \gamma_X(0, 1) + c \gamma_X(1, 1) + \sigma_\epsilon^2 .
    \qedhere
  \end{gather*}
\end{proof}

\begin{proposition}\label{prop:covXe-cc}
  Let coefficients $a$, $b$ and $c$ be such that $f_1>0,\:\ldots,\: f_4>0$,
  with $f_1,\ldots,f_4$ defined in \eqref{eq:deff1..f4}.
  Let $X$ be a stationary random field, and $\epsilon$ be
  a collection of zero-mean random variables with equal variance
  $\ME \epsilon_{i,j}^2$
  that satisfy \eqref{eq:difeq1}.
  Then random fields $X$ and $\epsilon$ are jointly stationary
  with cross-autocovariance function
  \begin{gather*}
    \cov(X_{i+h_1,j+h_2}, \epsilon_{i,j}) = 0 \qquad
    \mbox{if $h_1 < 0$ or $h_2 < 0$}, \\
    \cov(X_{i+h_1,j+h_2}, \epsilon_{i,j})
    = \sum_{k=0}^{\min(h_1,h_2)}
      \binom{h_1}{k} \binom{h_2}{k}
      a^{h_1-k}  b^{h_2-k}   (ab + c)^k \sigma_\epsilon^2
  \end{gather*}
  if $h_1\ge 0$ and $h_2 \ge 0$.
\end{proposition}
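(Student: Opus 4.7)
The plan is to reduce everything to the explicit causal representation of $X$ established in Proposition~\ref{prop:withpsikl-causal}, and then exploit the uncorrelatedness of $\epsilon$ to collapse the resulting double sum to a single term.

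First, since the hypothesis $f_1,\ldots,f_4>0$ is exactly the causality condition of Proposition~\ref{prop:withpsikl-causal}, I obtain the representation
\[
  X_{i,j} = \sum_{k=0}^\infty \sum_{l=0}^\infty \psi_{k,l}\,\epsilon_{i-k,j-l},
\]
where $\psi_{k,l}=\sum_{m=0}^{\min(k,l)}\binom{k}{m}\binom{l}{m} a^{k-m} b^{l-m} (ab+c)^m$ and $\sum_{k,l}|\psi_{k,l}|<\infty$. The absolute summability makes the series converge in mean square, which justifies interchanging the sum with the covariance in step two.

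Second, I substitute into the cross-covariance. For arbitrary integers $h_1,h_2$,
\[
  \cov(X_{i+h_1,j+h_2},\epsilon_{i,j})
  = \sum_{k=0}^\infty\sum_{l=0}^\infty \psi_{k,l}\,\cov(\epsilon_{i+h_1-k,j+h_2-l},\epsilon_{i,j}).
\]
Because the $\epsilon_{i,j}$ are uncorrelated with common variance $\sigma_\epsilon^2$, the inner covariance equals $\sigma_\epsilon^2$ when $(k,l)=(h_1,h_2)$ and vanishes otherwise. If $h_1<0$ or $h_2<0$ the pair $(h_1,h_2)$ is not in the index set $\{k,l\geq 0\}$, so the whole sum is zero. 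If $h_1,h_2\geq 0$ only a single term survives and
\[
  \cov(X_{i+h_1,j+h_2},\epsilon_{i,j}) = \psi_{h_1,h_2}\,\sigma_\epsilon^2,
\]
which, after inserting the closed form of $\psi_{h_1,h_2}$ from Proposition~\ref{prop:withpsikl-causal}, is exactly the formula in the statement.

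Third, since the resulting expression depends only on $(h_1,h_2)$ and not on $(i,j)$, and since $X$ is wide-sense stationary (given) while $\epsilon$ is wide-sense stationary by hypothesis (zero mean, constant variance, uncorrelated), the pair $(X,\epsilon)$ is jointly wide-sense stationary. There is no real obstacle here; the only point to be a bit careful about is the mean-square justification for exchanging sum and covariance, which follows from $\sum|\psi_{k,l}|<\infty$ together with the constant-variance property of $\epsilon$, so the series $\sum\psi_{k,l}\epsilon_{i-k,j-l}$ converges in $L^2$ to $X_{i,j}$ and the covariance is continuous in each argument in $L^2$.
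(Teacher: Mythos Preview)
Your proof is correct and follows essentially the same route as the paper: both invoke the causal representation from Proposition~\ref{prop:withpsikl-causal} and read off $\cov(X_{i+h_1,j+h_2},\epsilon_{i,j})=\sigma_\epsilon^2\psi_{h_1,h_2}$, then plug in the closed form for $\psi_{h_1,h_2}$. The only minor difference is in the joint-stationarity step: the paper argues that $\epsilon$ is a finite linear combination of shifts of the stationary field $X$ via \eqref{eq:difeq1a}, whereas you infer it from the cross-covariance depending only on the lag; both arguments are valid.
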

\begin{proof}
  The joint stationarity follows from \eqref{eq:difeq1a} and the stationarity of $X$
  (this was already used in the proof of Proposition~\ref{prop:uniquness-eX}).
  The formula for the autocovariance function follows from
  \eqref{eq:eXs2psi} and Proposition~\ref{prop:withpsikl-causal}.
\end{proof}

\begin{proposition}\label{prop:gh1h2_ads}
  Under conditions of Proposition~\ref{prop:covXe-cc},
  the autocovariance function of the field $X$
  satisfies
  \begin{equation}\label{eq:acovf-X-multiplicative}
    \gamma_X(h_1,h_2) = \frac{|\alpha|^{|h_1|} |\beta|^{|h_2|} \sigma_\epsilon^2}{\sqrt{D}}
    \qquad \mbox{if $h_1 h_2 \le 0$,}
  \end{equation}
  where $D$ is defined in Lemma~\ref{lemma:denominator}, and
  \[
  \alpha = \frac{2 (a + bc)}{1 + a^2 - b^2 - c^2 + \sqrt{D}} , \qquad
  \beta = \frac{2 (ac + b)}{1 - a^2 + b^2 - c^2 + \sqrt{D}} .
  \]
\end{proposition}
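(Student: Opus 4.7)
The plan is to reduce to the quadrant $h_1\ge 0,\,h_2\le 0$ via the even symmetry $\gamma_X(h_1,h_2)=\gamma_X(-h_1,-h_2)$, and then to induct on $h_1$ using the Yule--Walker recursion from Lemma~\ref{lem:YW004}, matching the one-dimensional boundary values obtained in Section~\ref{ss:h1eq0}.

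I would first record some easy consequences of causality. From $f_1f_2+f_3f_4=2(1+a^2-b^2-c^2)$ and $f_1f_3+f_2f_4=2(1-a^2+b^2-c^2)$, both quantities are strictly positive, so the $\sign(\cdot)$ factors in \eqref{eq:def-auni} and \eqref{eq:def-buni} equal $+1$ and the $\alpha,\beta$ of the present proposition coincide with those of Section~\ref{ss:h1eq0}. In particular, \eqref{eq:gXh10} and \eqref{eq:gX0h2} already give the formula on the two coordinate axes. Sums $f_i+f_j$ additionally yield $|a|,|b|,|c|<1$, and \eqref{neq:g00more} gives $|\alpha|,|\beta|<1$.

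The main (and essentially the only) obstacle is the algebraic identity
\[
  \alpha\,(1-b\beta) \;=\; a+c\beta,
\]
which I would verify directly from the definitions of $\alpha$ and $\beta$ together with the two equalities $(1+a^2-b^2-c^2)^2-4(a+bc)^2=D=(1-a^2+b^2-c^2)^2-4(ac+b)^2$ already recorded in Section~\ref{ss:h1eq0}. After clearing denominators one obtains an equality of the form $A_0+A_1\sqrt D=B_0+B_1\sqrt D$ with $A_i,B_i$ polynomials in $a,b,c$; using $(\sqrt D)^2=D$ and comparing the coefficients of $\sqrt D$ and of $1$ reduces the identity to two polynomial equalities in $a,b,c$ that are tedious but routine to check.

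With the identity in hand the induction on $h_1$ is immediate. The base case $h_1=0$ is \eqref{eq:gX0h2}. For the step, causality gives $f_1f_3>0$ and $f_2f_4>0$, so case~(1) of Lemma~\ref{lem:YW004} applies for every $h_1\ge 1$ and every integer $h_2$:
\[
  \gamma_X(h_1,h_2)-b\,\gamma_X(h_1,h_2-1) \;=\; a\,\gamma_X(h_1-1,h_2)+c\,\gamma_X(h_1-1,h_2-1).
\]
Substituting the inductive formula for $\gamma_X(h_1-1,\,\cdot\,)$ turns the right side into $(a+c\beta)\alpha^{h_1-1}\beta^{-h_2}\sigma_\epsilon^2/\sqrt D$; by the key identity the candidate $F(h_2):=\alpha^{h_1}\beta^{-h_2}\sigma_\epsilon^2/\sqrt D$ satisfies the same first-order linear recurrence in $h_2$. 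Hence $G(h_2):=\gamma_X(h_1,h_2)-F(h_2)$ obeys $G(h_2)=b\,G(h_2-1)$ with $G(0)=0$ by \eqref{eq:gXh10}, so iteration forces $G\equiv 0$ on $h_2\le 0$ (trivially if $b=0$; otherwise by $G(h_2-1)=G(h_2)/b$), closing the induction.
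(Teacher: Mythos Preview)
Your argument is correct and follows essentially the same route as the paper: both reduce to the axes via \eqref{eq:gX0h2}--\eqref{eq:gXh10}, invoke case~(1) of Lemma~\ref{lem:YW004} for $h_1\ge 1$, and rest on the same algebraic identity $\alpha(1-b\beta)=a+c\beta$ (equivalently the paper's $\alpha-a-c\beta=b\alpha\beta$). The only organizational difference is that the paper runs a nested double induction and then handles $b=0$ (and $a=b=0$) as separate cases, whereas your reformulation---showing $G(h_2)=\gamma_X(h_1,h_2)-F(h_2)$ satisfies $G(h_2)=bG(h_2-1)$ with $G(0)=0$---dispatches the degenerate case $b=0$ uniformly and is a bit cleaner.
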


\begin{proof}
  First, notice that
  \begin{gather*}
    1 + a^2 - b^2 - c^2 = \frac{(1{-}a{-}b{-}c)(1{-}a{+}b{+}c) + (1{+}a{-}b{+}c)(1{+}a{+}b{-}c)}{2} > 0, \\
    1 - a^2 + b^2 - c^2 = \frac{(1{-}a{-}b{-}c)(1{+}a{-}b{+}c) + (1{-}a{+}b{+}c)(1{+}a{+}b{-}c)}{2} > 0.
  \end{gather*}
  Thus, $\alpha$ and $\beta$ are the same as in \eqref{eq:def-buni} and \eqref{eq:def-auni},
  and equality in \eqref{eq:acovf-X-multiplicative}
  holds true for $h_1 h_2 = 0$.

  Consider three cases.
  \paragraph{Case 1:} $b\neq 0$.
  Eq.~\eqref{eq:acovf-X-multiplicative} hold true for $h_1 = 0$ and all $h_2$.
  Assume it holds for $h_1 = n_1$ and all $h_2$, for $n_1$ a nonnegative integer.
  Eq.~\eqref{eq:acovf-X-multiplicative} hold true for $h_2 = 0$ (and all $h_1$).
  Assume it holds true for $h_2 = -n_2$ and $h_1 = n_1+1$, for $n_2$ a nonnegative integer.
  Now prove that \eqref{eq:acovf-X-multiplicative} holds true for $h_2 = -n_2-1$ and all $h_1 = n_1+1$.
  Apply Lemma~\ref{lem:YW004}, conditions (1), for $h_1 = n_1+1\ge 1$ and $h_2 = -n_2$:
  \[
    \gamma_X(n_1{+}1,\,{-}n_2) = a \gamma_X(n_1,\, -n_2) + b \gamma_X(n_1{+}1,\, -n_2{-}1) + c \gamma_X(n_1,\, -n_2-1).
  \]
  Hence
  \begin{align*}
    \gamma_X(n_1{+}1,{-}n_2{-}1)
    &=
    \frac{\gamma_X(n_1{+}1,\,-n_2) - a \gamma_X(n_1,\, -n_2) - c \gamma_X(n_1,\, -n_2-1)} {b}
    = \\ &=
    \frac{1}{b} \left(\frac{\alpha^{n_1+1} \beta^{n_2} \sigma_\epsilon^2}{\sqrt{D}} -
    \frac{a \alpha^{n_1} \beta^{n_2} \sigma_\epsilon^2}{\sqrt{D}} -
    \frac{c \alpha^{n_1} \beta^{n_2+1} \sigma_\epsilon^2}{\sqrt{D}} \right)
    = \\ &=
    \frac{\alpha^{n_1} \beta^{n_2} \sigma_\epsilon^2 \, (\alpha - a - c \beta)}{b \, \sqrt{D}}
    =
    \frac{\alpha^{n_1+1} \beta^{n_2+1} \sigma_\epsilon^2}{\sqrt{D}};
  \end{align*}
  equality $\alpha - a - c \beta = b \alpha \beta$ can be verified by easy but tedious computations.
  Thus, \eqref{eq:acovf-X-multiplicative} holds true for $h_2 = -n_2-1$ and all $h_1 = n_1+1$.
  By induction, it holds true for $h_1 = n_1+1$ and all $h_2 \le 0$,
  and then for all $h_1\ge 0$ and $h_2 \le 2$.

  Since both sides of \eqref{eq:acovf-X-multiplicative} are even functions in $(h_1,h_2)$,
  that is $\gamma_X(h_1,h_2) = -\gamma_X(-h_1, -h_2)$,
  \eqref{eq:acovf-X-multiplicative} also holds true for $h_1 \le 0$ and $h_2 \ge 0$.
  Thus, \eqref{eq:acovf-X-multiplicative} holds true for all integer $h_1$ and $h_2$
  such that $h_1 h_2 \le 0$.

  \paragraph{Case 2:} $b = 0$ but $a \neq 0$.
  This case is symmetrical to case $a = 0$ and $b\neq 0$, which is covered by case 1.

  \paragraph{Case 3:} $a = b = 0$.  Then $\alpha = \beta = 0$ and
  \[
    \gamma_X(h_1,h_2) = \int_{-1/2}^{1/2} \int_{-1/2}^{1/2}
      \frac{\exp(2 \pi \mathrm{i} (h_1 \nu_1 + h_2 \nu_2))}
      {|1 - c \exp(2 \pi \mathrm{i} (\nu_1 + \nu_2))|^2}
      \, d\nu_1 \, d\nu_2 .
  \]
  Substituting $\nu_1 = \nu_3 -\nu_2$ and taking in account the periodicity, we get
  \begin{align*}
    \gamma_X(h_1,h_2)
      &=
      \int_{\nu_2 = -1/2}^{1/2}  \int_{\nu_3=\nu_2 - \frac12}^{\nu_2 + \frac12}
      \frac{\exp(2 \pi \mathrm{i} (h_1 \nu_3 - h_1 \nu_2 + h_2 \nu_2))}
      {|1 - c \exp(2 \pi \mathrm{i} (\nu_3))|^2}
      \, d\nu_3 \, d\nu_2
      = \\ &=
      \int_{\nu_2 = -1/2}^{1/2}  \int_{\nu_3= - \frac12}^{\frac12}
      \frac{\exp(2 \pi \mathrm{i} (h_1 \nu_3 - h_1 \nu_2 + h_2 \nu_2))}
      {|1 - c \exp(2 \pi \mathrm{i} \nu_3)|^2}
      \, d\nu_3 \, d\nu_2
      = \\ &=
      \int_{-1/2}^{1/2} \exp(2 \pi \mathrm{i} (h_2 - h_1) \nu_2) \, d\nu_2 \,
      \int_{\nu_3=-\frac12}^{\frac12}
      \frac{\exp(2 \pi \mathrm{i} h_1 \nu_3)}
      {1 - 2 c \cos (2 \pi \nu_3) + c^2 }
      \, d\nu_3 .
  \end{align*}
  If $h_1 h_2 <0$, then $h_1 \neq h_2$ and
      $\int_{-1/2}^{1/2} \exp(2 \pi \mathrm{i} (h_2 - h_1) \nu_2) \, d\nu_2 = 0$,
  whence $\gamma_X(h_1, h_2) = 0$, and \eqref{eq:acovf-X-multiplicative}
  holds true.
  As noted before \eqref{eq:acovf-X-multiplicative} holds true if $h_1 h_2 = 0$.
  Thus, \eqref{eq:acovf-X-multiplicative} holds true for all integer $h_1$ and $h_2$ such that
  $h_1 h_2 \le 0$.
\end{proof}
Under conditions of Proposition~\ref{prop:gh1h2_ads},
\begin{equation}\label{eq:g1-1}
  \gamma_X(1,-1) \gamma_X(0,0) = \gamma_X(1,0) \gamma_X(0,1) .
\end{equation}

\subsection{Stability}
\begin{lemma}\label{lem:psideq}
  Let $a$, $b$ and $c$ be such that $f_1,\ldots,f_4$ defined in \eqref{eq:deff1..f4}
  are positive real numbers.
  Let $\psi_{k,l}$ be defined in \eqref{eq:psikl-causal} if $k\ge 0$ and $l\ge 0$,
  and $\psi_{k,l} = 0$ if $k<0$ or $l<0$.
  Then $\psi_{k,l}$ satisfy this equation:
  \begin{equation}\label{eq:psideq}
    \psi_{k,l} - a \psi_{k-1,l} - b \psi_{k,l-1} - c \psi_{k-1,l-1} =
    \begin{cases}
      1 \quad \mbox{if $k=l=0$},\\
      0 \quad \mbox{otherwise}.
    \end{cases}
  \end{equation}.
\end{lemma}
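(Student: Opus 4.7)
The plan is to reduce \eqref{eq:psideq} to the Fourier-coefficient computation already carried out in the sufficiency proof of Proposition~\ref{prop:existance-eX-alt}, rather than verifying the identity combinatorially from the explicit formula \eqref{eq:psikl-causal} directly. First I would observe that under the hypothesis $f_1,\ldots,f_4>0$, Proposition~\ref{prop:existance-eX-newps} gives $D>0$, so the integral representation \eqref{eq:psi-as-int} is well-defined for every pair of integers $k,l$. By Proposition~\ref{prop:withpsikl-causal}, this integral coincides with \eqref{eq:psikl-causal} when $k\ge 0$ and $l\ge 0$, and the argument in the sufficiency part of that same proposition shows, via Lemma~\ref{lem:int-oneie} applied to the inner $\nu_1$- or $\nu_2$-integral, that the integral vanishes when $k<0$ or $l<0$. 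Hence the $\psi_{k,l}$ of the lemma statement agrees with \eqref{eq:psi-as-int} for all integer pairs $(k,l)$.

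The identity \eqref{eq:psideq} then follows immediately by linearity of the Fourier integral. The combination $\psi_{k,l} - a\psi_{k-1,l} - b\psi_{k,l-1} - c\psi_{k-1,l-1}$ is the $(k,l)$-th Fourier coefficient of $g_1(\nu_1,\nu_2)/g_1(\nu_1,\nu_2) = 1$, where $g_1$ is as defined in the proof of Proposition~\ref{prop:existance-eX-newps}. Equivalently, multiplying the integrand of \eqref{eq:psi-as-int} by $g_1$ cancels the denominator and leaves $\iint e^{-2\pi\mathrm{i}(k\nu_1+l\nu_2)}\,d\nu_1\,d\nu_2$, which equals $1$ for $k=l=0$ and $0$ otherwise. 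This reproduces the one-line computation already written out in the sufficiency proof of Proposition~\ref{prop:existance-eX-alt}.

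The only step requiring care is bookkeeping the index shifts, ensuring that each of $\psi_{k-1,l}$, $\psi_{k,l-1}$, $\psi_{k-1,l-1}$ is read off consistently from the integral representation in both the interior cases and the cases $k=0$ or $l=0$ where one of the shifted indices becomes negative; this is precisely where the vanishing half of the first step earns its keep. I do not expect any real obstacle. An alternative route would be the generating-function identity $\sum_{k,l\ge 0}\psi_{k,l}\,x^k y^l = (1-ax-by-cxy)^{-1}$, from which \eqref{eq:psideq} is transparent; but this still requires re-deriving the explicit form \eqref{eq:psikl-causal} by a partial-fraction-type expansion, so it is not materially shorter than the Fourier route.
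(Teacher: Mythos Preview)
Your proposal is correct and follows essentially the same route as the paper: identify the $\psi_{k,l}$ of the lemma with the Fourier coefficients \eqref{eq:psi-as-int} for all integers $k,l$ (using Proposition~\ref{prop:withpsikl-causal} for both the nonnegative case and the vanishing when an index is negative), then read off \eqref{eq:psideq} as the Fourier coefficients of the constant function~$1$. The paper's proof is terser but does exactly this, and the computation you point to in the sufficiency part of Proposition~\ref{prop:existance-eX-alt} is indeed reproduced verbatim.
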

\begin{proof}
  Under conditions $f_1>0,\,\ldots,\,f_4>0$,
  the coefficients $\psi_{k,l}$ satisfy \eqref{eq:psi-as-int}
  for all integer $k$ and $l$.
  Respective transforms of the Fourier coefficients and the integrand in
  \eqref{eq:psi-as-int} yield
  \begin{multline*}
    \psi_{k,l} - a \psi_{k-1,l} - b \psi_{k,l-1} - c \psi_{k-1,l-1}
    = \\ =
      \int_{-1/2}^{1/2} \int_{-1/2}^{1/2}
      \exp(-2 \pi \mathrm{i} (k \nu_1 + l \nu_2))
      \, d\nu_1 \, d\nu_2
    =
    \begin{cases}
      1 \quad \mbox{if $k=l=0$}, \\
      0 \quad \mbox{otherwise}.
    \end{cases}
     \qedhere
  \end{multline*}
\end{proof}
\begin{remark}
  In Lemma~\ref{lem:psideq} the condition on $a$, $b$ and $c$
  is unnecessary.
  Indeed, 
  the coefficients $\psi_{k,l}$ are polynomials in $a$, $b$ and $c$;
  thus, the left-hand side of \eqref{eq:psideq} is also a polynomial in $a$, $b$ and $c$
  and the fact that \eqref{eq:psideq} holds true for small $a$, $b$ and $c$ implies
  that \eqref{eq:psideq} holds true for all real $a$, $b$ and $c$.
\end{remark}

\begin{proposition}
  Let $a$, $b$ and $c$ be such that $f_1,\ldots,f_4$ defined in \eqref{eq:deff1..f4}
  are positive real numbers.
  Consider the recurrence equation
  \begin{equation}\label{eq:xij-dererministic}
    x_{i,j} = a x_{i-1,j} + b x_{i,j-1} + c x_{i-1,j-1} + v_{i,j},
  \end{equation}
  for all positive integers $i$ and $j$,
  where
    \begin{itemize}
        \item  $v_{i,j}$ are known variables, $i>0$ and $j>0$;
        \item  $x_{0,0}$, $x_{i,0}$ and $x_{0,j}$ are preset,
          so setting their values makes initial/boundary conditions;
        \item  $x_{i,j}$ ($i>0$ and $j>0$) are unknown variables.
    \end{itemize}
  Then the explicit formula for the solution is the following:
  \begin{align}
    x_{i,j} &= \psi_{i,j} x_{0,0} + \sum_{k=0}^{i-1} \psi_{k,j} \, (x_{i-k,0} - a x_{i-k-1,0})
    + \mbox{} \nonumber \\ &\quad +
    \sum_{l=0}^{j-1} \psi_{i,l} \, (x_{0,j-l} - b x_{0,j-l-1}) 
    + \sum_{k=0}^{i-1} \sum_{l=0}^{j-1} \psi_{k,l} v_{i-k,j-l},
    \label{eq:xij-explbosol}
  \end{align}
  with $\psi_{k,l}$ defined in \eqref{eq:psikl-causal}.

  If, in addition, the sequences of $x_{i,0}$ and $x_{0,j}$ and the array of $v_{i,j}$
  are bounded, then
  the solution $x$ is also bounded.
\end{proposition}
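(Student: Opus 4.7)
The plan is to define $\tilde x_{i,j}$ as the right-hand side of \eqref{eq:xij-explbosol}, verify that it matches the prescribed boundary data and satisfies the recurrence \eqref{eq:xij-dererministic}, and then conclude by the obvious uniqueness of solutions to \eqref{eq:xij-dererministic} with given initial/boundary values. For the boundary check, at $j = 0$ the two sums over $l$ in \eqref{eq:xij-explbosol} are empty; since \eqref{eq:psikl-causal} gives $\psi_{k,0} = a^k$, the remaining single sum telescopes to $x_{i,0} - a^i x_{0,0}$, which combined with the term $\psi_{i,0}\,x_{0,0} = a^i x_{0,0}$ yields $\tilde x_{i,0} = x_{i,0}$. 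The symmetric computation using $\psi_{0,l} = b^l$ gives $\tilde x_{0,j} = x_{0,j}$, and $\tilde x_{0,0} = \psi_{0,0}\,x_{0,0} = x_{0,0}$.

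For the recurrence, apply the operator $L[y]_{i,j} := y_{i,j} - a y_{i-1,j} - b y_{i,j-1} - c y_{i-1,j-1}$ term by term to $\tilde x$. Lemma~\ref{lem:psideq} immediately gives $L[\psi_{i,j}\,x_{0,0}]_{i,j} = 0$ whenever $(i,j) \neq (0,0)$. Reindexing the fourth (double) sum as $\sum_{m=1}^{i} \sum_{n=1}^{j} \psi_{i-m,\,j-n}\,v_{m,n}$ and applying $L$: for interior indices $1 \le m \le i-1$, $1 \le n \le j-1$ the coefficient of $v_{m,n}$ equals $\psi_{k,l} - a\psi_{k-1,l} - b\psi_{k,l-1} - c\psi_{k-1,l-1}$, which vanishes by Lemma~\ref{lem:psideq}; along the edge $m = i$ the coefficient reduces to $\psi_{0,\,j-n} - b\,\psi_{0,\,j-1-n} = b^{j-n} - b^{j-n} = 0$; along $n = j$ it is $\psi_{i-m,\,0} - a\,\psi_{i-1-m,\,0} = a^{i-m} - a^{i-m} = 0$; and the coefficient of $v_{i,j}$ itself is $\psi_{0,0} = 1$. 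Performing the analogous substitution $m = i-k$ in the second term, the same reasoning shows that interior coefficients vanish by Lemma~\ref{lem:psideq} and the $m = i$ boundary coefficient equals $\psi_{0,j} - b\,\psi_{0,j-1} = 0$ for $j \ge 1$; the third term is symmetric. Summing contributions, $L[\tilde x]_{i,j} = v_{i,j}$ for all $i, j \ge 1$, so by uniqueness $x = \tilde x$.

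For the boundedness claim, set $S = \sum_{k,l=0}^{\infty} |\psi_{k,l}|$, which is finite by Proposition~\ref{prop:withpsikl-causal}, and let $M$ be a common upper bound for $|x_{0,0}|$, $\{|x_{i,0}|\}$, $\{|x_{0,j}|\}$ and $\{|v_{i,j}|\}$. Bounding each of the four sums in \eqref{eq:xij-explbosol} termwise by $M$ times the relevant $|\psi_{\cdot,\cdot}|$ gives $|x_{i,j}| \le \bigl(3 + |a| + |b|\bigr) S M$ uniformly in $(i,j)$. The main obstacle is the bookkeeping in the recurrence verification: the three edges of each summation range, together with the single corner $v_{i,j}$, each require separate inspection using the explicit values $\psi_{k,0} = a^k$ and $\psi_{0,l} = b^l$, and it is easy to miscount the contributions from $T_2$, $T_3$ and $T_4$ if one is not careful.
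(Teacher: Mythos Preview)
Your proof is correct and follows essentially the same strategy as the paper's: verify the boundary data, verify the recurrence via Lemma~\ref{lem:psideq}, conclude by uniqueness, and bound using the absolute summability of $\psi$. The only organizational difference is that the paper first extends $v$ to the boundary by setting $v_{0,0}=0$, $v_{i,0}=x_{i,0}-a x_{i-1,0}$, $v_{0,j}=x_{0,j}-b x_{0,j-1}$, thereby collapsing all four pieces of \eqref{eq:xij-explbosol} into the single double sum $\sum_{k=0}^{i}\sum_{l=0}^{j}\psi_{k,l}\,v_{i-k,j-l}$, so that Lemma~\ref{lem:psideq} applies in one stroke without the separate edge/corner bookkeeping you carry out.
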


\begin{proof}
  First, show that under convention that ``an empty sum is zero'', namely
  $\sum_{i=0}^{-1} \ldots = 0$ and $\sum_{j=0}^{-1} \ldots = 0$,
  the right side of \eqref{eq:xij-explbosol} satisfies the bounding conditions.
  For one or both indices being zero, the coefficients $\psi_{k,l}$ equal
  \[
    \psi_{0,0} = 1, \qquad \psi_{k,0} = a^k, \qquad \psi_{0,l} = b^l
  \]
  for all integer $k\ge 0$ and $l\ge 0$. Hence,
  \begin{align*}
    \psi_{0,0} x_{0,0} &= x_{i,j}, \\
    \psi_{0,j} x_{0,0} &+ \sum_{l=0}^{j-1} \psi_{0,l} (x_{0,j-l} - b x_{0,j-l-1})
    =
    b^l x_{0,0} + \sum_{l=0}^{j-1} b^l (x_{0,j-k} - b x_{0,j-l-1})
    = \\ &=
    b^l x_{0,0} + \sum_{l=0}^{j-1} b^l x_{0,j-l} - \sum_{l=0}^{j-1} b^{l+1} x_{0,j-l-1}
    = \\ &= 
    b^l x_{0,0} + \sum_{l=0}^{j-1} b^l x_{0,j-l} - \sum_{l=1}^{j} b^{l} x_{0,j-l}
    = 
    b^l x_{0,0} + x_{0,j} - b^l x_{0,0} = x_{0,j}
  \end{align*}
  for integer $j>0$, and due to symmetry for all integers $i>0$
  \[
    \psi_{i,0} x_{0,0} + \sum_{k=0}^{i-1} \psi_{k,0}\, (x_{i-k,0} - a x_{i-k-1,0} ) = x_{i,0} .
  \]

  Known variables $v_{i,j}$ are defined for all integer $i>0$ and $j>0$. Extend their domain
  for all integer $i\ge 0$ and $j\ge 0$ by assigning
  \[
    v_{0,0} = 0; \qquad
    v_{i,0} = x_{i,0} - a x_{i-1,0}, \quad i>0; \qquad
    v_{0,j} = x_{0,j} - b x_{0,j-1}, \quad j>0.
  \]
  Then the right-hand side of \eqref{eq:xij-explbosol} is
  \[
    x^{\rm RHS}_{i,j} = \sum_{k=0}^i \sum_{l=0}^j \psi_{k,l} v_{i-k,j-l}, \qquad
    i\ge 0, \quad j\ge 0.
  \]
  In particular, $x^{\rm RHS}_{i,j} = x_{i,j}$ if either $i=0$ or $j=0$.

  Now prove that $x^{\rm RHS}$ satisfies \eqref{eq:xij-dererministic}.
  To that and, let $\psi_{k,l} = 0$ if $k<0$ or $l<0$.
  Then, for $i$ and $j$ positive integers,
  \begin{multline*}
    x^{\rm RHS}_{i,j} - a x^{\rm RHS}_{i-1,j} - b x^{\rm RHS}_{i,j-1} - c x^{\rm RHS}_{i-1,j-1}
    = \\
    \begin{aligned}
      &=
      \sum_{k=0}^i \sum_{l=0}^j \psi_{k,l} v_{i-k,j-l}
      - a \sum_{k=0}^{i-1} \sum_{l=0}^j \psi_{k,l} v_{i-1-k,j-l}
      - \mbox{} \\ & \quad -
      b \sum_{k=0}^i \sum_{l=0}^{j-1} \psi_{k,l} v_{i-k,j-1-l}
      - c \sum_{k=0}^{i-1} \sum_{l=0}^{j-1} \psi_{k,l} v_{i-1-k,j-1-l}
      = \\ &=
      \sum_{k=0}^i \sum_{l=0}^j \psi_{k,l} v_{i-k,j-l}
      - a \sum_{k=1}^i \sum_{l=0}^j \psi_{k-1,l} v_{i-k,j-l}
      - \mbox{} \\ & \quad -
      b \sum_{k=0}^i \sum_{l=1}^j \psi_{k,l-1} v_{i-k,j-l}
      - c \sum_{k=1}^i \sum_{l=1}^j \psi_{k-1,l-1} v_{i-k,j-l}
      = \\ &=
      \sum_{k=0}^i \sum_{l=0}^j (\psi_{k,l} - a \psi_{k-1,l} - b \psi_{k,l-1} - c \psi_{k-1,l-1}) v_{i-k,j-l}
      =
      v_{i,j}\, ;
    \end{aligned}
  \end{multline*}
  here we used Lemma~\ref{lem:psideq}.

  Finally, equality $x_{i,j} = x^{\rm RHS}_{i,j}$ for all integer $i\ge 0$ and $j\ge 0$
  can be proved by induction.

  If initial/boundary values and $v_{i,j}$ are bounded,
  \[
    \sup_{i>0} |x_{i,0}|< \infty, \qquad
    \sup_{j>0} |x_{0,j}|< \infty, \quad \mbox{and} \quad
    \sup_{i,j>00} |v_{i,j}| < \infty,
  \]
  then
  \begin{align*}
    |x_{i,j}| &= \left|\sum_{k=0}^i \sum_{k=0}^j \psi_{k,l} v_{i-k,j-l}\right|
    \le \sum_{k=0}^i \sum_{j=0}^l |\psi_{i,j}| \max_{0\le r \le 1} \max{0\le s \le 1} |v_{r,s}|
    \le \\ &\le
    \sum_{k=0}^i \sum_{j=0}^l |\psi_{i,j}|
    \times \begin{aligned}[t] \max\Bigl(
      &|x_{0,0}|, \; (1+|a|) \max_{1\le r \le i} |x_{r,0}|, \\
      &(1+|b|) \max_{1\le s \le i} |x_{0,s}|, \;
      \max_{1\le r \le i} \max_{1\le s \le i} |v_{i,j}| \Bigr) \le
    \end{aligned}
    \\ &\le
    \sum_{k=0}^{\infty} \sum_{j=0}^{\infty} |\psi_{i,j}|
    \times \begin{aligned}[t] \max\Bigl(
      &|x_{0,0}|, \; (1+|a|) \sup_{r \ge 1} |x_{r,0}|, \\
      &(1+|b|) \sup_{s \ge 1} |x_{0,s}|, \; 
      \sup_{r, s \ge 1} |v_{i,j}| \Bigr) < \infty .
    \end{aligned}
  \end{align*}
  Thus, the solution $x$ is bounded.  Here we used \eqref{neq:causal_abscon}.
\end{proof}

\section{Symmetry, nonidentifiability, and special cases}
\label{sec:symmetry}
In this section we consider the question how parameters change when the
random field $X$ is flipped, and whether two or more sets of parameters
correspond to the same distribution (or the same autocovariance function) of $X$.
Here parameters are 
the coefficients $a$, $b$ and $c$ and the variance of
the error term $\sigma_\epsilon^2$.
Usually, two sets sets of parameters correspond to the same autocovariance function.
In special cases, up to four 
sets sets of parameters correspond to the same autocovariance function.
We start with special cases.

\subsection{Special cases}
\subsubsection{Symmetric case: $a b + c = 0$.}
\label{sss:symmetric_case}

\begin{proposition}
  Let 
  a stationary field $X$ and
  a collection of uncorrelated zero-mean equal-variance random variables $\epsilon$
  be a solution to \eqref{eq:difeq1}.
  The autocovariance function $\gamma_X$ is even with respect to each variable,
  $\gamma_X(h_1, h_2) = \gamma_X(-h_1, h_2) = \gamma_X(h_1, -h_2)$
  if and only if $ab + c = 0$.
\end{proposition}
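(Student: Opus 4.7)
The plan is to transfer evenness of the autocovariance function to evenness of the spectral density, and then read off the condition from the explicit decomposition of the denominator $g(\nu_1,\nu_2)$ already computed in Subsection~\ref{ss:h1eq0}.

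First, since a stationary solution $X$ exists, Proposition~\ref{prop:existance-eX-newps} gives $D>0$, and Corollary~\ref{cor:hassectrdensity} yields $f_X(\nu_1,\nu_2) = \sigma_\epsilon^2/g(\nu_1,\nu_2)$, which by Lemma~\ref{lemma:denominator} is continuous and strictly positive on $[-\tfrac12,\tfrac12]^2$. Next, note that $\gamma_X(h_1,h_2) = \gamma_X(-h_1,-h_2)$ holds for every real-valued stationary field, so the condition ``$\gamma_X$ is even in each of its arguments'' is equivalent to the single condition $\gamma_X(h_1,h_2) = \gamma_X(-h_1,h_2)$ for all $h_1, h_2$. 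Substituting $\nu_1 \mapsto -\nu_1$ in \eqref{eq:def-sd-1} gives
\[
  \gamma_X(-h_1,h_2)
  = \int_{-1/2}^{1/2}\!\int_{-1/2}^{1/2} e^{2\pi\mathrm{i}(h_1\nu_1+h_2\nu_2)} f_X(-\nu_1,\nu_2)\,d\nu_1\,d\nu_2,
\]
so by uniqueness of Fourier coefficients the required identity of autocovariances is equivalent to $f_X(\nu_1,\nu_2) = f_X(-\nu_1,\nu_2)$ almost everywhere, hence (by continuity) everywhere, which in turn is equivalent to $g(\nu_1,\nu_2) = g(-\nu_1,\nu_2)$ identically.

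Now I use the decomposition already established in \eqref{eq:ABC-denom}:
\[
  g(\nu_1,\nu_2) = A + B\cos(2\pi\nu_1) + C\sin(2\pi\nu_1),
\]
where $A$ and $B$ depend on $\nu_2$ only and $C = 2(ab+c)\sin(2\pi\nu_2)$. Since the cosine term is even in $\nu_1$ and the sine term is odd,
\[
  g(\nu_1,\nu_2) - g(-\nu_1,\nu_2) = 2C\sin(2\pi\nu_1) = 4(ab+c)\sin(2\pi\nu_1)\sin(2\pi\nu_2),
\]
and this expression vanishes identically on $[-\tfrac12,\tfrac12]^2$ if and only if $ab+c=0$. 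Combining the two equivalences proves the proposition in both directions.

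There is no real obstacle: the only substantive input is the explicit form of $g$ in \eqref{eq:ABC-denom}, which was already produced. The mildly delicate point is the passage from equality of Fourier coefficients to pointwise equality of $f_X$, which is handled at once by the continuity of $g$ on the closed torus.
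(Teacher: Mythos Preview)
Your proof is correct and follows essentially the same route as the paper: both reduce the question to evenness of $f_X$ (equivalently $g$) in $\nu_1$, invoke the decomposition $g=A+B\cos(2\pi\nu_1)+C\sin(2\pi\nu_1)$ from \eqref{eq:ABC-denom}, and conclude that the sine coefficient $C=2(ab+c)\sin(2\pi\nu_2)$ must vanish identically. Your version is slightly more direct, going straight to $f_X(\nu_1,\nu_2)=f_X(-\nu_1,\nu_2)$ via uniqueness of Fourier coefficients rather than the paper's intermediate step of arguing that the inner integral over $\nu_1$ must be real-valued in $\nu_2$.
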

\begin{proof}
  Eq.~\eqref{eq:def-sd-1} can be rewritten as
\begin{equation*}
  \gamma_X(h_1,\,h_2)
  =
  \int_{-1/2}^{1/2} e^{2 \pi {\mathrm{i}} h_2 \nu_2}
  \int_{-1/2}^{1/2} e^{2 \pi {\mathrm{i}} h_1 \nu_1}
  f_X(\nu_1,\nu_2) \, d\nu_1 \, d\nu_2 .
\end{equation*}
The even symmetry with respect to $h_2$, $\gamma_X(h_1,h_2) = \gamma_X(h_1, -h_2)$,
is equivalent to the inner integral being a real function,
\begin{align*}
  & \int_{-1/2}^{1/2} e^{2 \pi {\mathrm{i}} h_1 \nu_1}
  f_X(\nu_1,\nu_2) \, d\nu_1
  = \\ &=
  \int_{-1/2}^{1/2}
  \frac {\exp(2 \pi \, {\mathrm{i}} \, h_1 \nu_1) \sigma_\epsilon^2}
  {|1 - a e^{2 \pi {\mathrm{i}} \nu_1}
      - b e^{2 \pi {\mathrm{i}} \nu_2} - c e^{2 \pi {\mathrm{i}} (\nu_1 + \nu_2)} |^2}
  \, d\nu_1
  = \\ &=
  \int_{-1/2}^{1/2}
  \frac {\exp(2 \pi \, {\mathrm{i}} \, h_1 \nu_1) \sigma_\epsilon^2}
  {A + B \cos(2 \pi \nu_1) + C \sin(2 \pi \nu_2)}
  \in \mathbb{R} \mbox{\quad for almost all $\nu_2\in\left[-\frac12, \frac12\right]$}
  ,
\end{align*}
with $A$, $B$ and $C$ defined in \eqref{eq:ABC-denom} in Section~\ref{ss:h1eq0}.
In turn, this is equivalent to the function Fourier-transformed
  being an even function,
\[
  \frac{\sigma_\epsilon^2}
  {A + B \cos(2 \pi \nu_1) + C \sin(2 \pi \nu_1)}
  =
  \frac {\sigma_\epsilon^2}
  {A + B \cos(-2 \pi \nu_1) + C \sin(-2 \pi \nu_1)}
\]
for almost all $\nu_1$ and $\nu_2\in\left[-\frac12, \frac12\right]$,
which holds true if and only if
$C = 2 (ab + c) \sin(2 \pi \nu_2) = 0$ for all $\nu_2$,
or $ab + c = 0$.
\end{proof}

Thus, we consider the case $c = - ab$.
The following proposition establishes conditions on $a$ and $b$.

\begin{proposition}\label{prop:scsym-acf}
  Let $c = -ab$. The stationary field $X$
  and a collection of uncorrelated zero-mean variables with equal variance $\sigma_\epsilon^2$
  which satisfy \eqref{eq:difeq1}
  exist if and only if $|a| \neq 1$ and $|b| \neq 1$.
  In that case, the autocovariance function of $X$ is equal
  \begin{equation}\label{eq:gamma-symmetric}
    \gamma_X(h_1,h_2) = \frac{a^{\pm h_1} b^{\pm h_2} \sigma_\epsilon^2}{|1 - a^2| \times |1 - b^2|},
  \end{equation}
  where the signs ``$\pm$'' are chosen so that $|a^{\pm h_2}|<1$ and $|b^{\pm h_2}|<1$.

  The field $X$ is causal with respect to $\epsilon$ is and only if $|a|<1$ and $|b|<1$.
\end{proposition}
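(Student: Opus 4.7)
The plan is to exploit the factorization that occurs when $c = -ab$: the AR polynomial in the denominator of the spectral density splits as a product of two one-dimensional AR(1) polynomials, so everything reduces to the well-understood one-dimensional case.

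First I would substitute $c = -ab$ into the definition of $D$ and group factors:
\begin{align*}
  D &= (1-a-b+ab)(1-a+b-ab)(1+a-b-ab)(1+a+b+ab) \\
    &= (1-a)(1-b)\cdot(1-a)(1+b)\cdot(1+a)(1-b)\cdot(1+a)(1+b)
    = (1-a^2)^2(1-b^2)^2.
\end{align*}
Hence $D>0$ if and only if $|a|\neq 1$ and $|b|\neq 1$, and the existence part follows immediately from Proposition~\ref{prop:existance-eX-alt}.

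For the autocovariance, I would note the key algebraic factorization
\[
  1 - a e^{2\pi\mathrm{i}\nu_1} - b e^{2\pi\mathrm{i}\nu_2} + ab\, e^{2\pi\mathrm{i}(\nu_1+\nu_2)}
  = (1 - a e^{2\pi\mathrm{i}\nu_1})(1 - b e^{2\pi\mathrm{i}\nu_2}).
\]
Taking squared modulus gives $g(\nu_1,\nu_2) = |1-a e^{2\pi\mathrm{i}\nu_1}|^2 \, |1-b e^{2\pi\mathrm{i}\nu_2}|^2$, so by Corollary~\ref{cor:hassectrdensity} the spectral density $f_X$ factors as a product of two one-dimensional densities. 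By Fubini the integral \eqref{eq:acovf-via-integral-expl} splits:
\[
  \gamma_X(h_1,h_2) = \sigma_\epsilon^2 \cdot I_a(h_1) \cdot I_b(h_2),
  \quad I_a(h) = \int_{-1/2}^{1/2} \frac{e^{2\pi\mathrm{i} h\nu}}{|1-a e^{2\pi\mathrm{i}\nu}|^2}\, d\nu.
\]
The one-dimensional integral $I_a(h)$ can be evaluated by Lemma~\ref{lemma:integration} (or, equivalently, by geometric-series expansion of $1/(1-a e^{2\pi\mathrm{i}\nu})$ when $|a|<1$, together with the substitution $a\mapsto 1/a$ and a conjugation argument when $|a|>1$). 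Both cases collapse into the single formula $I_a(h) = a^{\pm h}/|1-a^2|$, with the sign of the exponent chosen so that $|a^{\pm h}|\le 1$. Multiplying through yields \eqref{eq:gamma-symmetric}.

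For causality, I would apply Proposition~\ref{prop:withpsikl-causal} and substitute $c=-ab$ into $f_1,\dots,f_4$ from \eqref{eq:deff1..f4}, giving the four products $(1\mp a)(1\mp b)$ (with all sign combinations). All four are simultaneously positive iff $1-a, 1+a, 1-b, 1+b$ are positive, i.e.\ $|a|<1$ and $|b|<1$; if $|a|>1$ or $|b|>1$, two of the four products must have opposite signs. The only real obstacle is bookkeeping the signs in $I_a(h)$ when $|a|>1$, which requires a careful change of variable rather than any new idea.
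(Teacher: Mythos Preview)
Your proposal is correct and follows essentially the same route as the paper: factor $D=(1-a^2)^2(1-b^2)^2$ to get the existence criterion, factor the spectral density via $(1-ae^{2\pi\mathrm{i}\nu_1})(1-be^{2\pi\mathrm{i}\nu_2})$ and evaluate the resulting one-dimensional integrals with Lemma~\ref{lemma:integration}, and substitute $c=-ab$ into the causality inequalities of Proposition~\ref{prop:withpsikl-causal}. The only cosmetic difference is that the paper cites Proposition~\ref{prop:existance-eX-newps} rather than Proposition~\ref{prop:existance-eX-alt} for existence, and it phrases the causality argument via ``same sign plus positive sum'' rather than your direct sign analysis.
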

\begin{proof}
  Apply Proposition~\ref{prop:existance-eX-newps}.
  Since $c=-ab$,
  \begin{align*}
    D &= (1 - a - b + a b) (1 - a + b - a b) (1 + a - b - a b) (1 + a + b + a b)
      = \\ &=
         (1 - a)^2 (1 - b)^2 (1 + b)^2 (1 + a)^2 .
  \end{align*}
  Thus, $D\ge 0$, and the necessary and sufficient condition $D>0$
  for the desired process to exist is satisfied if and only if
  $(1-a) (1-b) (1+b) (1+a) \neq 0$,
  which is equivalent to $|a|\neq 1$ and $|b| \neq 1$.

  The spectral density \eqref{eq:sdensX} splits into two factors
  \[
    f_X(\nu_1,\nu_2) = \frac{1}{|1 - a e^{2 \pi \mathrm{i} \nu_1}|^2} \times
    \frac{\sigma_\epsilon^2}{|1 - b e^{2 \pi \mathrm{i} \nu_2}|^2} .
  \]
  Thus, the autocovariance function is
  \[
    \gamma_X(h_1,\,h_2) =
    \int_{-1/2}^{1/2} 
      \frac{\exp(2 \pi h_1 \nu_1)}{|1 - a e^{2\pi \mathrm{i} \nu_1}|^2} \, d \nu_1 \times
    \int_{-1/2}^{1/2} 
      \frac{\exp(2 \pi h_2 \nu_2)}{|1 - b e^{2\pi \mathrm{i} \nu_2}|^2} \, d \nu_2 \, \sigma_\epsilon^2.
  \]
  With Lemma~\ref{lemma:integration},
  \begin{align*}
    \int_{-1/2}^{1/2} 
      \frac{\exp(2 \pi h_1 \nu_1)}{|1 - a e^{2\pi \mathrm{i} \nu_1}|^2} \, d \nu_1
    &=
    \int_{-1/2}^{1/2} 
      \frac{\exp(2 \pi h_1 \nu_1)}{1 + a^2 - 2 a \cos (2\pi  \nu_1)} \, d \nu_1
    = \\ &=
    \frac{\alpha^{|h_1|}}{\sqrt{(1+a^2)^2 - 4a^2}}
    =
    \frac{\alpha^{|h_1|}}{|1 - a^2|}
  \end{align*}
  with
  \[
    \alpha = \frac{2 a}{1 + a^2 + |1 - a^2|} = \begin{cases}
      a & \mbox{if $|a|<1$}, \\
      a^{-1} & \mbox{if $|a|>1$} .
    \end{cases}
  \]
  Thus,
  \[
    \int_{-1/2}^{1/2} 
      \frac{\exp(2 \pi h_1 \nu_1)}{|1 - a e^{2\pi \mathrm{i} \nu_1}|^2} \, d \nu_1
    =
    \frac{a^{\pm h_1}}{|1 - a^2|} .
  \]
  Similarly,
  \[
    \int_{-1/2}^{1/2} 
      \frac{\exp(2 \pi h_2 \nu_2)}{|1 - b e^{2\pi \mathrm{i} \nu_2}|^2} \, d \nu_2
    =
    \frac{b^{\pm h_2}}{|1 - b^2|} ,
  \]
  and \eqref{eq:gamma-symmetric} holds true.

  The causality conditions from Proposition~\ref{prop:withpsikl-causal}
  rewrite as follows:
  \begin{align*}
    (1 - a)(1 - b) &> 0, &  (1 - a)(1 + b) &> 0, \\
    (1 + a)(1 - b) &> 0, &  (1 + a)(1 + b) &> 0.
  \end{align*}
  They mean that $1-a$, $1-b$, $1+a$ and $1+b$ should be (nonzero and) of the same sign.
  As sum of these factors $(1-a)+(1-b)+(1+a)+(1+b)=4$ is positive,
  the causality condition is equivalent to 
  \[
    1-a>0, \qquad 1-b>0, \qquad 1+a>0, \quad \mbox{and} \quad 1+b>0,
  \]
  which in turn is equivalent to $|a|<1$ and $|b|<1$.
\end{proof}

\subsubsection{Case $a = -b c$: uncorrelated observations along transect line}
Another special case is where the coefficients in \eqref{eq:difeq1} satisfy relation
$a = - b c$.
\begin{proposition}
  Let $a = -bc$. The stationary field $X$
  and a collection of uncorrelated zero-mean variables with equal variance $\sigma_\epsilon^2$
  which satisfy \eqref{eq:difeq1}
  exist if and only if $|b| \neq 1$ and $|c| \neq 1$.
  The autocovariance function of $X$ satisfies relations
  \[
    \gamma_X(h_1,0) = 0, \quad h_1\neq 0\, ; 
    \qquad
    \gamma_X(0, h_2) = \frac{b^{\pm h_2} \sigma_\epsilon^2}
    {|1-b^2| \times |1-c^2|}
  \]
  where the sign ``$\pm$'' is chosen so that $|b^{\pm h_2}|<1$.

  The field $X$ is causal with respect to $\epsilon$ is and only if $|b|<1$ and $|c|<1$.
\end{proposition}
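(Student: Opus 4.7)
The plan is to imitate the proof of Proposition~\ref{prop:scsym-acf}: first factor $D$, then exploit a one-dimensional cancellation in the double integral~\eqref{eq:acovf-via-integral-expl}. Substituting $a=-bc$ into the four factors of $D$ from Lemma~\ref{lemma:denominator} gives
\[
  f_1=(1-b)(1-c),\quad f_2=(1+b)(1+c),\quad f_3=(1-b)(1+c),\quad f_4=(1+b)(1-c),
\]
so $D=(1-b^2)^2(1-c^2)^2$. By Proposition~\ref{prop:existance-eX-newps} the stationary solution exists iff $D>0$, i.e.\ iff $|b|\neq 1$ and $|c|\neq 1$. Causality via Proposition~\ref{prop:withpsikl-causal} requires all four $f_i>0$; $f_1 f_3>0$ forces $1-c$ and $1+c$ to share a sign, so $|c|<1$, and symmetrically $|b|<1$; the converse is immediate.

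For the autocovariance, the key observation is that $a=-bc$ is exactly the condition under which the marginal obtained by integrating out $\nu_2$ becomes constant in $\nu_1$. Grouping the polynomial inside the modulus in \eqref{eq:sdensX} as an affine function of $v=e^{2\pi\mathrm{i}\nu_2}$,
\[
  g_1(\nu_1,\nu_2) = \alpha - \beta v, \quad
  \alpha = 1 - a e^{2\pi\mathrm{i}\nu_1},\quad
  \beta = b + c e^{2\pi\mathrm{i}\nu_1},
\]
gives $g = |\alpha|^2 + |\beta|^2 - 2|\alpha||\beta|\cos(2\pi\nu_2 + \arg(\bar\alpha\beta))$ and
\[
  |\alpha|^2 - |\beta|^2 = 1 + a^2 - b^2 - c^2 - 2(a+bc)\cos(2\pi\nu_1).
\]
Under $a+bc=0$ the $\nu_1$-dependence vanishes, leaving the constant $(1-b^2)(1-c^2)$. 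Lemma~\ref{lemma:integration}, applied after a periodic shift in $\nu_2$, yields
\[
  \int_{-1/2}^{1/2}\frac{d\nu_2}{g(\nu_1,\nu_2)} \;=\; \frac{1}{\bigl||\alpha|^2-|\beta|^2\bigr|} \;=\; \frac{1}{|1-b^2|\,|1-c^2|},
\]
\emph{independently of $\nu_1$}. Inserting into \eqref{eq:acovf-via-integral-expl} and integrating $e^{2\pi\mathrm{i}h_1\nu_1}$ against the constant gives $\gamma_X(h_1,0)=0$ for $h_1\neq 0$ and $\gamma_X(0,0)=\sigma_\epsilon^2/(|1-b^2|\,|1-c^2|)$.

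For $\gamma_X(0,h_2)$ the order of integration is reversed: write $g_1 = \alpha' - \beta' e^{2\pi\mathrm{i}\nu_1}$ with $\alpha' = 1 - be^{2\pi\mathrm{i}\nu_2}$ and $\beta' = a + ce^{2\pi\mathrm{i}\nu_2} = c(e^{2\pi\mathrm{i}\nu_2}-b)$; the identity $|e^{2\pi\mathrm{i}\nu_2}-b|=|1-be^{2\pi\mathrm{i}\nu_2}|$ gives $|\beta'|=|c|\,|\alpha'|$, and so Lemma~\ref{lemma:integration} produces
\[
  \int_{-1/2}^{1/2}\frac{d\nu_1}{g(\nu_1,\nu_2)} \;=\; \frac{1}{|1-c^2|\,|1-be^{2\pi\mathrm{i}\nu_2}|^2}.
\]
The remaining $\nu_2$-integration of $e^{2\pi\mathrm{i}h_2\nu_2}$ against this kernel is the univariate evaluation already carried out in the proof of Proposition~\ref{prop:scsym-acf}, yielding $b^{\pm h_2}\sigma_\epsilon^2/(|1-b^2|\,|1-c^2|)$ with the sign chosen so that $|b^{\pm h_2}|<1$.

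The main obstacle is spotting the correct orientation: the algebraic identity $a+bc=0$ is precisely what cancels the $\cos$ term in $|\alpha|^2-|\beta|^2$, and this is the analytic counterpart to the statistical statement that the observations along each horizontal transect are uncorrelated. Once this cancellation is isolated, both the vanishing of $\gamma_X(h_1,0)$ and the closed form for $\gamma_X(0,h_2)$ reduce to the one-dimensional formulas already developed in Section~\ref{ss:h1eq0} and in the proof of Proposition~\ref{prop:scsym-acf}.
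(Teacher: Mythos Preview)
Your proof is correct and covers all parts of the statement. The route you take differs from the paper's own argument, so a brief comparison is in order.

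The paper does not re-evaluate the double integral \eqref{eq:acovf-via-integral-expl} from scratch. Instead it invokes the general formulas \eqref{eq:gX0h2}, \eqref{eq:gXh10}, \eqref{eq:def-auni}, \eqref{eq:def-buni} already derived in Section~\ref{ss:h1eq0}: substituting $a=-bc$ into \eqref{eq:def-auni} gives $\alpha=0$ immediately from the numerator $2(a+bc)$, and then a few lines of algebraic simplification of \eqref{eq:def-buni} produce the stated form of $\beta$. The existence and causality parts are handled essentially as you do.

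Your approach is more self-contained: you redo the inner integration directly, using the decomposition $g_1=\alpha-\beta e^{2\pi\mathrm{i}\nu_2}$ (and the companion $g_1=\alpha'-\beta' e^{2\pi\mathrm{i}\nu_1}$) together with the identity $|e^{2\pi\mathrm{i}\nu_2}-b|=|1-be^{2\pi\mathrm{i}\nu_2}|$. This makes the mechanism behind $\gamma_X(h_1,0)=0$ transparent --- the $\cos(2\pi\nu_1)$ term in $|\alpha|^2-|\beta|^2$ is killed precisely by $a+bc=0$, so the marginal in $\nu_1$ is constant --- whereas in the paper this fact is hidden inside the vanishing of the general parameter $\alpha$. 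The trade-off is that the paper's route is shorter because it leans on machinery already built, while yours is a cleaner standalone argument for this special case.
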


\begin{proof}
  Since $a=-ac$,
  \begin{align*}
    D &= (1 + bc - b - c) (1 + bc + b + c) (1 - bc - b + c) (1 - bc + b - c)
      = \\ &=
         (1 - b)^2 (1 - c)^2 (1 + b)^2 (1 + c)^2 .
  \end{align*}
  Thus, $D\ge 0$, and the necessary and sufficient condition
  of existence of a stationary solution, namely $D>0$,
  is satisfied if and only if
  $(1-b) (1-c) (1+b) (1+c) \neq 0$,
  which is equivalent to $|b|\neq 1$ and $|c| \neq 1$.

  For the autocovariance function, we use results of Section~\ref{ss:h1eq0}.
  With notation \eqref{eq:def-buni} and \eqref{eq:def-auni},
  \begin{gather*}
    \sqrt{D} = |1 - b^2| \times |1 - c^2|, \qquad
    \alpha = 0, \\
    \begin{aligned}
      \beta &= 
    \frac{2 (-b^2 c + b)}{1 - b^2 c^2 + b^2 - c^2 + \sign(1 - b^2 c^2 + b^2 - c^2) \sqrt{D}}
      = \\ &=
    \frac{2 (1 - c^2) b}{(1+b^2) (1-c^2)  + \sign(1 - c^2) |1 - b^2| \times |1 - c^2|}
      = \\ &= 
    \frac{2 (1 - c^2) b}{(1+b^2) (1-c^2)  + |1 - b^2| \, (1 - c^2)}
      = \\ &=
    \frac{2 b}{(1+b^2)  + |1 - b^2| }
    =
    \begin{cases}
      b & \mbox{if $|b|<1$,} \\
      b^{-1} & \mbox{if $|b|>1$.}
    \end{cases}
    \end{aligned}
  \end{gather*}
  The desired formulas for the autocovariance function follow from \eqref{eq:gX0h2} and \eqref{eq:gXh10}.

  The causality conditions, which are obtained in Proposition~\ref{prop:withpsikl-causal},
  can be rewritten as follows:
  \begin{align*}
    (1 - b)(1 - c) &> 0, &  (1 + b)(1 + c) &> 0, \\
    (1 - b)(1 + c) &> 0, &  (1 + b)(1 - c) &> 0.
  \end{align*}
  These conditions are equivalent to $|b|<1$ and $|c|<1$.
  The proof is similar to one in Proposition~\ref{prop:scsym-acf}.
\end{proof}

The values of the autocovariance function for $a = -0.1$, $b=0.5$,
$c = 0.2$, and $\sigma_\epsilon^2 = 0.72$ are shown in Table~\ref{tab:acfsc2}.
The parameters are chosen in such a way that $\var X_{i,j} = 1$.
\begin{table}
  \caption{The autocovariance function $\gamma_X(h_1,h_2)$
           for $a=-0.1$, $b=0.5$, $c=0.2$, and $\sigma_\epsilon^2=0.72$}
  \begin{center}
    \begin{tabular}{lllllll}
      & \multicolumn{6}{l}{\textbf{Values of }\boldmath$\gamma_{X}(h_1,h_2)$} \\
      \hline
      \boldmath$h_2$ & \boldmath$h_1{=}{-}2$ & \boldmath$h_1{=}{-}1$ &
      \boldmath$h_1{=}0$ & \boldmath$h_1{=}1$ & \boldmath$h_1{=}2$ & \boldmath$h_1{=}3$ \\
      \hline
      \hphantom{\textbf{--}}\textbf{3} & \hphantom{--}0 & 0 & \textbf{0.125} &
        0.1125 & \hphantom{--}0.0225 & --0.00225 \\
      \hphantom{\textbf{--}}\textbf{2} & \hphantom{--}0 & 0 & \textbf{0.25} &
        0.15   & \hphantom{--}0.0075 & --0.003 \\
      \hphantom{\textbf{--}}\textbf{1} & \hphantom{--}0 & 0 & \textbf{0.5} &
        0.15   & --0.015 & \hphantom{--}0.0015 \\
      \hphantom{\textbf{--}}\textbf{0} & \hphantom{--}\textbf{0} & \textbf{0} & \textbf{1} &
          \textbf{0} & \hphantom{--}\textbf{0} & \hphantom{--}\textbf{0} \\
      \textbf{--1} & --0.015 & 0.15 & \textbf{0.5} &
        0 & \hphantom{--}0 & \hphantom{--}0 \\
      \textbf{--2} & \hphantom{--}0.0075 & 0.15 & \textbf{0.25} &
        0 & \hphantom{--}0 & \hphantom{--}0 \\
      \textbf{--3} & \hphantom{--}0.0225 & 0.1125 & \textbf{0.125} &  
        0 & \hphantom{--}0 & \hphantom{--}0 \\
      \hline
    \end{tabular}
  \end{center}
  \label{tab:acfsc2}
\end{table}

\subsection{Nonidentifiability. Symmetry}
Let $X$ be a stationary solution to eq.~\eqref{eq:difeq1}.
Consider the relation between parameters $a$, $b$, $c$ and $\sigma_\epsilon^2$ and
the distribution of the random field $X$ --- more specifically, we study the multiplicity
of this relation.
The situation is complicated by the fact that the distribution of $X$ is not determined uniquely by the
parameters $a$, $b$, $c$ and $\sigma_\epsilon^2$; it also depends on the distribution of the error field
$\epsilon$ besides the scaling coefficient $\sigma_\epsilon$.

\begin{definition}
  Let us have a statistical model $(\Omega, \mathcal{F},\; \mathsf{P}_{\theta,\phi}, \: (\theta,\phi)\in\Psi)$,
  that is a family of probability measures $\mathsf{P}_{\theta,\psi}$ on a common measurable space
  $(\Omega, \mathcal{F})$ is indexed by parameter $(\theta,\phi)\in\Psi$.
  The parameter $\theta$ is called \textit{identifiable} if
  $\mathsf{P}_{\theta_1,\psi_1} = \mathsf{P}_{\theta_2,\psi_2}$ implies $\theta_1 = \theta_2$.
\end{definition}
In the model considered, the parameter of interest $\theta$ comprises
the coefficients and the error variance, $\theta = (a, b, c, \sigma_\epsilon^2) \in \Theta$,
$\sigma_\epsilon>0$,
\[
  \Theta = \{ (a, b, c, v) : 
 (1{-}a{-}b{-}c)(1{-}a{+}b{+}c)(1{+}a{-}b{+}c)(1{+}a{+}b{-}c)>0 \mbox{~and~} v>0 \}.
\]
The nuisance ``non-parametric'' parameter $\phi$ describes the distribution of
the normalized error field $\sigma_\epsilon^{-1} \epsilon$.
In the Gaussian case, $\sigma_\epsilon^{-1} \epsilon$ is a collection of independent random variables all of
standard normal distribution.
In general case, $\sigma_\epsilon^{-1} \epsilon$ is a collection of uncorrelated zero-mean unit-variance random variables.

It is much simpler to consider the autocovariance function $\gamma_X(h_1,h_2)$ than the distribution of $X$,
as $\gamma_X(h_1,h_2)$ is uniquely determined by $\theta$ due to \eqref{eq:acovf-via-integral-expl}.
In the Gaussian case, the distribution of zero-mean stationary field $X$ is uniquely determined
by its autocovariance function; and obviously the distribution uniquely determines the autocovariance function.
In general case, if parameters $\theta_1$ and $\theta_2$ determine the same autocovariance function,
than for any ``distribution of a collection of uncorrelated zero-mean unit-variance variables $\sigma_\epsilon^{-1} \epsilon$''
described by parameter $\phi_1$ there exists a corresponding distribution described by parameter $\phi_2$
such that $P_{\theta_1,\phi_1} = P_{\theta_2,\phi_2}$.
Thus, it is enough to find out which parameters $\theta$ yield the same autocovariance function $\gamma_X(h_1,h_2)$.

In this section we will abuse notation by denoting the random field $X$ as $X_{i,j}$.
Similarly, $X_{-i,-j}$ will be a random field $X^{(2)}$ defined so that $X^{(2)}_{i,j} = X_{-i,-j}$.

By substitution of indices, \eqref{eq:difeq1} can be rewritten as
\begin{gather*}
  X_{-i,-j} = - \frac{b}{c} X_{1-i,-j} - \frac{a}{c} X_{-i,1-j} + \frac{1}{c} X_{1-i,1-j} - \frac{1}{c} \epsilon_{1-i,1-j} , \\
  X_{-i,j} = \frac{1}{a} X_{1-i,j} - \frac{c}{a} X_{-1,j-1} - \frac{b}{a} X_{1-i,j-1} - \frac{1}{a} \epsilon_{1-i,j}, \\
  X_{i,-j} = -\frac{c}{b} X_{i-1,-j} + \frac{1}{b} X_{i,1-j} - \frac{a}{b} X_{i-1,1-j} - \frac{1}{b} \epsilon_{i,1-j}.
\end{gather*}
Thus, random fields $X_{-i,-j}$, $X_{-i,j}$ and $X_{i,-j}$ satisfy equations of the structure of \eqref{eq:difeq1},
but with different coefficients and different error term.  The correspondence of parameters is presented in Table~\ref{tab:cocoef}.

\begin{table}
  \caption{Correspondence of terms of the recurrence equation for fields $X_{i,j}$, $X_{-i,-j}$, $X_{-i,j}$ and $X_{i,-j}$}
  \begin{center}
    \begin{tabular}{lllllll}
      \hline
      \textbf{Line} & \textbf{Field of} & \multicolumn{3}{l}{\textbf{Coefficients}} & \multicolumn{2}{l}{\textbf{Error field}} \\
      \textbf{no., \boldmath$m$} & \textbf{interest} & & & & \textbf{variance} & \textbf{values} \\
      \hline
      1 & $X_{i,j}$   & $ a  $ & $ b  $ & $ c  $ & $            \sigma_\epsilon^2$ & $\epsilon_{i,j}$ \\
      2 & $X_{-i,-j}$ & $-b/c$ & $-a/c$ & $ 1/c$ & $c^{-2}      \sigma_\epsilon^2$ & $-c^{-1} \epsilon_{1-i,1-j}$ \\
      3 & $X_{-i,j}$  & $ 1/a$ & $-c/a$ & $-b/a$ & $a^{-2}      \sigma_\epsilon^2$ & $-a^{-1} \epsilon_{1-i,j}$ \\
      4 & $X_{i,j-1}$ & $-c/b$ & $ 1/b$ & $-a/b$ & $b^{-2}      \sigma_\epsilon^2$ & $-b^{-1} \epsilon_{i,1-j}$ \\
      \hline
    \end{tabular}
  \end{center}
  \label{tab:cocoef}
\end{table}
However the random fields $X_{i,j}$ and $X_{-i,-j}$ have the same autocovariance function,
$\gamma_{X_{i,j}}(h_1,h_2) = \gamma_{X_{-i,-j}}(h_1,h_2)$.
Hence, two sets of parameters $\theta_1 = (a, b, c, \sigma_\epsilon^2)$ and $\theta_2 = (-b/c, -a/c, 1/c, c^{-2}\sigma_\epsilon^2)$
determine the same autocovariance function of the field $X$.

The autocovariance function of random fields $X_{-i,j}$ and $X_{i,-j}$ is flip-symmetric to the autocovariance function
of the field $X_{i,j}$:
\[
  \gamma_{X_{-i,j}}(h_1,h_2) = \gamma_{X_{i,-j}}(h_1,h_2) = \gamma_{X_{i,j}}(-h_1, h_2) =  \gamma_{X_{i,j}}(h_1, -h_2).
\]
In the symmetric case $a b + c = 0$, where the autocovariance function $\gamma_{X_{i,j}}(h_1, h_2)$
is even in each of the arguments, the autocovariance functions of the random fields $X_{i,j}$, $X_{-i,-j}$,
$X_{-i,j}$ and $X_{i,-j}$ are all equal.
Hence, four sets of parameters presented in Table~\ref{tab:cocoefsym} determine the same autocovariance function
of the random fields $X_{i,j}$.
Hence, four sets of parameters $\theta_1 = (a, b, -bc, \allowbreak \sigma_\epsilon^2)$,
$\theta_2 = (a^{-1}!, b^{-1}\!, -a^{-1}b^{-1}\!, \allowbreak a^{-2}b^{-2}\sigma_\epsilon^2)$,
$\theta_3 = (a^{-1}\!, b, -a^{-1}b, \allowbreak a^{-2}\sigma_\epsilon^2)$ and
$\theta_4 = (a, b^{-1}\!, \allowbreak -ab^{-1}\!, \allowbreak b^{-2}\sigma_\epsilon^2)$
determine the same autocovariance function of the random field $X$.
\begin{table}
  \caption{Four sets of parameters $\theta=(a,b,c,\sigma_\epsilon^2)$ that determine the same 
  autocovariance function $\gamma_X(h_1,h_2)$}
  \begin{center}
    \begin{tabular}{llll}
      \hline
       $ a     $ & $ b      $ & $ c=-ab$           &    $              \sigma_\epsilon^2$ \\
       $ a^{-1}$ & $ b^{-1} $ & $ -a^{-1} b^{-1} $ &    $a^{-2} b^{-2} \sigma_\epsilon^2$ \\
       $ a^{-1}$ & $ b      $ & $-a^{-1} b$        &    $a^{-2}        \sigma_\epsilon^2$ \\
       $ a     $ & $ b^{-1} $ & $-a b^{-1}$        &    $b^{-2}        \sigma_\epsilon^2$ \\
      \hline
    \end{tabular}
  \end{center}
  \label{tab:cocoefsym}
\end{table}

Now we prove that there no other spurious cases where different sets of parameters
determine the same autocovariance function.

\begin{lemma}\label{lemma:1of4}
  Let $D>0$, with $D$ defined in Lemma~\ref{lemma:denominator}.
  Then of four sets of parameters listed in Table~\ref{tab:cocoef},
  exactly one set satisfy the causality conditions specified in Proposition~\ref{prop:withpsikl-causal}.
\end{lemma}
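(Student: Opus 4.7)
The plan is to compute, for each row $m = 1, 2, 3, 4$ of Table~\ref{tab:cocoef}, the four quantities $f_i^{(m)}$ obtained by substituting the $m$-th row's coefficients into \eqref{eq:deff1..f4}, and to express them in terms of the original $f_1, f_2, f_3, f_4$. A routine calculation will produce the pattern that each of rows~2, 3, 4 flips the signs of a specific pair of $f_i$'s, up to division by $c$, $a$, $b$: namely, row~2 yields $(-f_1, f_2, f_3, -f_4)/c$, row~3 yields $(-f_1, -f_2, f_3, f_4)/a$, and row~4 yields $(-f_1, f_2, -f_3, f_4)/b$, while row~1 is the identity.

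Second, I would use the sum identity $f_1 + f_2 + f_3 + f_4 = 4 > 0$ together with $D = f_1 f_2 f_3 f_4 > 0$ to classify the admissible sign vectors of $(f_1, f_2, f_3, f_4)$: positivity of $D$ forces an even number of negative $f_i$, and positivity of the sum rules out all four being negative. Thus the sign vector has exactly 0 or 2 minuses, for a total of $1 + \binom{4}{2} = 7$ allowed patterns.

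Third, I would match each pattern to the unique row making all four $f_i^{(m)}$ positive, which by Proposition~\ref{prop:withpsikl-causal} is equivalent to that row being causal. The all-positive pattern matches row~1 only, since rows~2--4 each flip at least one sign for either choice of coefficient. For each of the six two-negative patterns, exactly one row (with a specific sign of its coefficient, determined by the overall factor $1/a$, $1/b$, or $1/c$) inverts the negative pair while leaving the other two positive. The required sign of the coefficient is automatically forced by the sign pattern via the identities $f_1+f_4 = 2-2c$, $f_1+f_2 = 2-2a$, and $f_1+f_3 = 2-2b$: for instance, if $f_1 < 0$ and $f_4 < 0$, then $f_1+f_4 < 0$ forces $c > 1$, matching row~2 with $c > 0$, and an analogous argument in every other case ensures that the denominator is nonzero and of the right sign.

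The main obstacle is bookkeeping across the seven cases: for each sign pattern one must verify both that the chosen row yields causality and that the other three rows do not. A conceptual way to organize this is to observe that the flip operations corresponding to rows~1--4 combined with the choice of sign of the denominator generate the subgroup $\mathbb{Z}_2^3$ of even-parity sign flips on four positions; this group has eight elements, of which seven correspond to the seven admissible sign patterns (the eighth, ``flip all,'' being excluded since it would produce all-negative $f_i$, forbidden by $\sum f_i = 4$). The bijection between (row, sign of denominator) pairs and allowed sign patterns then yields the uniqueness directly.
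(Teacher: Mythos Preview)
Your proposal is correct and follows essentially the same route as the paper: the paper's proof consists precisely of the seven-case search over the sign patterns of $(f_1,f_2,f_3,f_4)$ that you describe, with the outcome recorded in Table~\ref{tab:7cases4}. Your computation of the transformed $f_i^{(m)}$ for each row, the sign-pattern classification via $\sum f_i = 4$ and $D>0$, and the use of the pair-sum identities $f_1+f_4=2-2c$, $f_1+f_2=2-2a$, $f_1+f_3=2-2b$ to pin down the sign of the relevant denominator all match what the paper relies on (though the paper omits the details). Your group-theoretic packaging via the $\mathbb{Z}_2^3$ of even sign-flips is a tidy way to organize the uniqueness half that the paper leaves implicit; just be sure when you write it up to note that for fixed $(a,b,c)$ each row~2--4 realizes only one of its two possible flips (the one dictated by $\operatorname{sign} c$, $\operatorname{sign} a$, $\operatorname{sign} b$), so the four rows give four distinct group elements, exactly one of which sends the current sign pattern to all-positive.
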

The proof involves the search of 7 cases of different signs of $f_1,\ldots,f_4$ defined in \eqref{eq:deff1..f4}, and evaluating inequalities.
For brevity, we do not present the full proof; however, in Table~\ref{tab:7cases4} we list for what signs of $f_1,\ldots,f_4$
which parameterization is causal.
\begin{table}
  \caption{Relation between the signs of $f_1,\ldots,f_4$ and line in Table~\ref{tab:cocoef} where parameters
  satisfy the causality condition}
  \begin{center}
    \begin{tabular}{llll}
      \textbf{Signs of \boldmath$f_1,\ldots, f_4$} &
      \textbf{The} &
      \textbf{Which} &
      \textbf{Outcome} \\
      & \textbf{signs} &
      \textbf{parameter-} & \\
      & \textbf{imply} &
      \textbf{ization} & \\
      & \textbf{that} &
      \textbf{is causal, \boldmath$m$} & \\
      \hline
      $f_1>0$, $f_2>0$, $f_3>0$, $f_4>0$ & & 1 &
      $X_{i,j}$ is causal w.r.t.~$\epsilon_{i,j}$ \\
      \hline
      $f_1<0$, $f_2>0$, $f_3>0$, $f_4<0$ & $c>0$ & 2 &
      $X_{-i,-j}$ is causal
    \\$f_1>0$, $f_2<0$, $f_3<0$, $f_4>0$ & $c<0$ &  & w.r.t.~$\epsilon_{1-i,1-j}$\\
      \hline
      $f_1<0$, $f_2<0$, $f_3>0$, $f_4>0$ & $a>0$ & 3 &
      $X_{-i,j}$ is causal 
      \\
      $f_1>0$, $f_2>0$, $f_3<0$, $f_4<0$ & $a<0$ &  &w.r.t.~$\epsilon_{1-i,j}$ \\
      \hline
      $f_1<0$, $f_2>0$, $f_3<0$, $f_4>0$ & $b>0$ & 4 &
      $X_{i,-j}$ is causal
      \\
      $f_1>0$, $f_2<0$, $f_3>0$, $f_4<0$ & $b<0$ &  & w.r.t.~$\epsilon_{i,1-j}$\\
      \hline
    \end{tabular}
  \end{center}
  \label{tab:7cases4}
\end{table}

\begin{lemma}\label{lemma:covtotheta_causal}
  In the causal case, that is under causality conditions stated in Proposition~\ref{prop:withpsikl-causal},
  the autocovariance function uniquely determines coefficients $a$, $b$ and $c$
  and error variance $\sigma_\epsilon^2$.
\end{lemma}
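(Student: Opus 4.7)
The plan is to recover the parameters $a$, $b$, $c$ and $\sigma_\epsilon^2$ from $\gamma_X$ as explicit functions by means of the Yule--Walker equations of Proposition~\ref{prop:YW-nscond}, thereby showing that the map from parameters to autocovariance function is injective on the causal region.

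First I would write out the three Yule--Walker equations at the lags $(1,0)$, $(0,1)$ and $(1,1)$:
\begin{align*}
    \gamma_X(1,0) &= a\,\gamma_X(0,0) + b\,\gamma_X(1,-1) + c\,\gamma_X(0,-1), \\
    \gamma_X(0,1) &= a\,\gamma_X(-1,1) + b\,\gamma_X(0,0) + c\,\gamma_X(-1,0), \\
    \gamma_X(1,1) &= a\,\gamma_X(0,1) + b\,\gamma_X(1,0) + c\,\gamma_X(0,0).
\end{align*}
After invoking even symmetry $\gamma_X(h_1,h_2) = \gamma_X(-h_1,-h_2)$, the coefficient matrix of this $3\times 3$ linear system in the unknown column $(a,b,c)^{\top}$ is exactly the covariance matrix $M$ of the random vector $(X_{i-1,j},\,X_{i,j-1},\,X_{i-1,j-1})$. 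Any two parameter sets in the causal region that yield the same $\gamma_X$ must therefore satisfy the same linear system, with the same right-hand side and the same coefficient matrix.

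Second, I would prove that $M$ is invertible. Under the causality conditions we have $D>0$, so by Corollary~\ref{cor:hassectrdensity} and Lemma~\ref{lemma:denominator} the field $X$ admits the continuous spectral density $\sigma_\epsilon^2 / g(\nu_1,\nu_2)$, which is strictly positive on the entire torus $\left(-\tfrac12,\tfrac12\right]^2$. Consequently, for any nonzero $(\alpha,\beta,\gamma)\in\mathbb{R}^3$ the quantity $\var(\alpha X_{i-1,j} + \beta X_{i,j-1} + \gamma X_{i-1,j-1})$ equals the integral of a nontrivial trigonometric polynomial in $(\nu_1,\nu_2)$ squared against a strictly positive density, hence is strictly positive. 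Thus $M$ is positive definite, and $(a,b,c)$ is uniquely determined by $\gamma_X$ through $(a,b,c)^{\top} = M^{-1}(\gamma_X(1,0),\gamma_X(0,1),\gamma_X(1,1))^{\top}$.

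Third, with $(a,b,c)$ now read off from $\gamma_X$, the zero-lag Yule--Walker equation from Proposition~\ref{prop:YW-nscond}, namely
\[
    \sigma_\epsilon^2 = \gamma_X(0,0) - a\,\gamma_X(1,0) - b\,\gamma_X(0,1) - c\,\gamma_X(1,1),
\]
recovers $\sigma_\epsilon^2$ and completes the identification. I expect the only non-routine step to be the invertibility of $M$; the spectral-density positivity argument sketched above is considerably shorter than the alternative of algebraically evaluating $\det M$ from the closed-form expressions of Section~\ref{ss:h1eq0} and Proposition~\ref{prop:gh1h2_ads}.
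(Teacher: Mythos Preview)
Your argument is correct. Both proofs solve the Yule--Walker system for $(a,b,c)$ and then read off $\sigma_\epsilon^2$ from the zero-lag equation, but the two handle the solvability of the linear system differently.

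The paper first invokes the identity \eqref{eq:g1-1}, i.e.\ $\gamma_X(1,-1)\gamma_X(0,0)=\gamma_X(1,0)\gamma_X(0,1)$ from Proposition~\ref{prop:gh1h2_ads}, to eliminate $\gamma_X(1,-1)$ from the system. After this substitution the equations decouple enough to write $a$, $b$, $c$ as explicit rational functions of only $\gamma_X(0,0)$, $\gamma_X(1,0)$, $\gamma_X(0,1)$, $\gamma_X(1,1)$, with \eqref{neq:g00more} guaranteeing that the denominators $\gamma_X(0,0)^2-\gamma_X(0,1)^2$ and $\gamma_X(0,0)^2-\gamma_X(1,0)^2$ are nonzero. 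Your route keeps $\gamma_X(1,-1)$ in the coefficient matrix and instead shows the full $3\times 3$ covariance matrix $M$ of $(X_{i-1,j},X_{i,j-1},X_{i-1,j-1})$ is positive definite because the spectral density $\sigma_\epsilon^2/g$ is strictly positive on the torus. This spectral argument is shorter and independent of Proposition~\ref{prop:gh1h2_ads}; the paper's computation, on the other hand, yields closed-form expressions in only four lags, which is what one would want when building a moment estimator.
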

\begin{proof}
  Using Yule--Walker equations, \eqref{neq:g00more} and \eqref{eq:g1-1},
  we can express the parameters $a$, $b$, $c$ and $\sigma_\epsilon^2$
  in terms of $\gamma_X(0,0)$, $\gamma_X(1,0)$, $\gamma_X(0,1)$ and $\gamma_X(1,1)$.
The explicit formulas are
\begin{gather*}
  a = \frac{\gamma_X(1,0)\gamma_X(0,0)-\gamma_X(0,1)\gamma_X(1,1)}
      {\gamma_X(0,0)^2 - \gamma_X(0,1)^2}, \\
  b = \frac{\gamma_X(0,1)\gamma_X(0,0) - \gamma_X(1,0)\gamma_X(1,1)}
      {\gamma_X(0,0)^2 - \gamma_X(1,0)^2}, \\
  c = \frac{\gamma_X(1,1) - a \gamma_X(0,1) - b \gamma_X(1,0)}{\gamma_X(0,0)}, \\
  \sigma_\epsilon^2 = \gamma_X(0,0) \sqrt{D},
\end{gather*}
where $D$ is defined in Lemma~\ref{lemma:denominator}.
\end{proof}

The following proposition provides a necessary condition for two sets of parameters
to determine the same autocovariance function of the field $X$.
\begin{proposition}\label{prop:necessary-equalacf}
  Let stationary fields $X^{(1)}$ and $X^{(2)}$ satisfy \eqref{eq:difeq1}-like equation
  with parameters $\theta_1 = (a_1,b_1,c_1,\sigma_{\epsilon,1}^2)$ and
  $\theta_2 = (a_2,b_2,c_2,\sigma_{\epsilon,2}^2)$:
  \begin{gather*}
    X^{(1)}_{i,j} = a_1 X^{(1)}_{i-1,j} + b_1 X^{(1)}_{i,j-1} + c_1 X^{(1)}_{i-1,j-1} + \epsilon^{(1)}_{i,j}, \\
    X^{(2)}_{i,j} = a_2 X^{(2)}_{i-1,j} + b_2 X^{(2)}_{i,j-1} + c_2 X^{(2)}_{i-1,j-1} + \epsilon^{(2)}_{i,j}, \\
    \var \epsilon^{(1)}_{i,j} = \sigma_{\epsilon,1}^2, \qquad
    \var \epsilon^{(2)}_{i,j} = \sigma_{\epsilon,2}^2.
  \end{gather*}
  If random fields $X^{(1)}$ and $X^{(2)}$ have the same autocovariance function, then the parameters $\theta_1$ and $\theta_2$
  relate to each other as quadruples of parameters in Table~\ref{tab:cocoef}.
\end{proposition}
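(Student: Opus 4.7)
My plan is to reduce both parameter vectors to their causal representatives and then invoke uniqueness (Lemma~\ref{lemma:covtotheta_causal}). For each $i\in\{1,2\}$, Lemma~\ref{lemma:1of4} supplies a unique causal element $\tilde\theta_i$ in the orbit of $\theta_i$ under the four transformations listed in Table~\ref{tab:cocoef}. By construction $\tilde\theta_i$ is the parameter vector of one of the four reflected fields $X^{(i)}_{\pm i,\pm j}$, so its autocovariance equals either $\gamma(h_1,h_2)$ or $\gamma(-h_1,h_2)$, where $\gamma:=\gamma_{X^{(1)}}=\gamma_{X^{(2)}}$.

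If $\gamma_{\tilde X^{(1)}}=\gamma_{\tilde X^{(2)}}$, Lemma~\ref{lemma:covtotheta_causal} immediately yields $\tilde\theta_1=\tilde\theta_2$. Since $\theta_i$ lies in the orbit of $\tilde\theta_i$, and the two orbits share this common element, they coincide; hence $\theta_2$ is obtained from $\theta_1$ by one of the four transformations of Table~\ref{tab:cocoef}, which is the desired conclusion.

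The main obstacle is the ``mixed'' case in which $\gamma_{\tilde X^{(1)}}=\gamma$ while $\gamma_{\tilde X^{(2)}}=\gamma(-\cdot,\cdot)$. I would exclude it by applying the causal identity~\eqref{eq:g1-1} to each causal representative: $\tilde\theta_1$ yields $\gamma(1,-1)\gamma(0,0)=\gamma(1,0)\gamma(0,1)$, and $\tilde\theta_2$, whose autocovariance is $\gamma(-\cdot,\cdot)$, yields $\gamma(1,1)\gamma(0,0)=\gamma(1,0)\gamma(0,1)$; combining gives $\gamma(1,1)=\gamma(1,-1)$. Using Proposition~\ref{prop:gh1h2_ads} and the causal Yule--Walker equation $\gamma(1,1)=a_1\gamma(0,1)+b_1\gamma(1,0)+c_1\gamma(0,0)$, I would then verify that this equality forces $a_1b_1+c_1=0$, placing us in the symmetric case of Section~\ref{sss:symmetric_case}; but then $\gamma$ is even in each argument, so $\gamma=\gamma(-\cdot,\cdot)$ and the two causal autocovariances agree, contradicting the hypothesis of the mixed case. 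Degenerate subcases where one of $a_1$, $b_1$, $c_1$ vanishes and some rows of Table~\ref{tab:cocoef} become undefined require separate bookkeeping but no new idea.
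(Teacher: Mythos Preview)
Your approach is essentially the paper's: reduce each $\theta_i$ to its causal representative via Lemma~\ref{lemma:1of4}, then use Lemma~\ref{lemma:covtotheta_causal} once the two causal autocovariances are shown to agree. The only difference is how you reconcile the potential flip between $\gamma_{\tilde X^{(1)}}$ and $\gamma_{\tilde X^{(2)}}$.

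The paper handles this uniformly rather than by contradiction: since any flip fixes the axes, the two causal autocovariances agree for $h_1=0$ or $h_2=0$; Proposition~\ref{prop:gh1h2_ads} then forces agreement on the whole region $h_1h_2\le 0$, and combining this with the flip relation yields equality everywhere. This avoids the computation you defer (``I would then verify that this equality forces $a_1b_1+c_1=0$''). Your route is still fine---that computation is exactly the inequality part of Proposition~\ref{prop:3cases} and follows cleanly from the formulas in Lemma~\ref{lemma:covtotheta_causal}---but note that the Yule--Walker equation you invoke must use the \emph{causal} parameters $\tilde a_1,\tilde b_1,\tilde c_1$, not the original $a_1,b_1,c_1$; what you actually obtain is $\tilde a_1\tilde b_1+\tilde c_1=0$, which is what makes $\gamma$ even. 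Also, your ``contradiction'' is really a reduction: in the mixed case $\gamma$ turns out to be even, so $\gamma_{\tilde X^{(1)}}=\gamma_{\tilde X^{(2)}}$ after all and you fall back to the equal case. Finally, the orbit argument is legitimate because $T_1,\dots,T_4$ form the Klein four-group where defined, and the degenerate-coefficient bookkeeping you flag is indeed routine.
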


  \begin{proof}
    Denote $T_m : \Theta_m \to \Theta$ the operator that transforms the quadruple of parameters in the fist line of Table~\ref{tab:cocoef}
    into one in the $m$th line, $m=1, 2, 3, 4$.  Thus,
    \begin{gather*}
      T_2(a,b,c,\sigma_\epsilon^2) = \textstyle\left(-\frac{b}{c}, -\frac{a}{c}, \frac{1}{c}, c^{-2} \sigma_\epsilon^2\right), \\
      T_3(a,b,c,\sigma_\epsilon^2) = \textstyle\left(\frac{1}{a}, -\frac{c}{a}, -\frac{b}{a}, a^{-2} \sigma_\epsilon^2\right), \\
      T_4(a,b,c,\sigma_\epsilon^2) = \textstyle\left(-\frac{c}{b}, \frac{1}{b}, -\frac{a}{b}, b^{-2} \sigma_\epsilon^2\right),
    \end{gather*}
    and $T_1$ is the identity operator, $T_1(\theta) = \theta$. Here $\Theta_m \subset \Theta$ is the domain where the operator $T_m$
    is well-defined, e.g., $\Theta_2 = \{(a, b,c, \sigma_\epsilon^2) \in \Theta : \allowbreak c\neq 0\}$.
    We have to prove that
    $\theta_1 = T_m \theta_0$ and $\theta_2 = T_n \theta_0$ for some $m, n=1,\ldots,4$ and $\theta_0 \in \Theta$.

    According to Lemma~\ref{lemma:1of4}, one of parameterization $T_1 \theta_1, \ldots, T_4 \theta_1$
    satisfies the conditions for causality; let it be $T_m \theta_1 = \theta_3$. Denote also $X^{(3)}$ and $\epsilon^{(3)}$
    the random field defined by such parameterization, and the respective white noise; what this means is listed in Table~\ref{tab:cocoef}.
    For example, if $m=1$, that $X^{(3)} = X^{(1)}$, and if $m=2$, then $X^{(3)}_{i,j} = X^{(1)}_{-i,-j}$.

    The random fields $X^{(1)}$ and $X^{(3)}$ are either equal or flip or turn symmetric to each other,
    $X^{(3)}_{i,j} = X^{(1)}_{\pm i, \pm j}$.
    Hence, their autocovariance functions are either equal or one-variable symmetric to each other: either
    \[ \gamma_{X^{(3)}} (h_1, h_2) = \gamma_{X^{(1)}} (h_1, h_2) \quad \mbox{for all $h_1$ and $h_2$}, \]
    or
    \[ \gamma_{X^{(3)}} (h_1, h_2) = \gamma_{X^{(1)}} (-h_1, h_2) =\gamma_{X^{(1)}} (h_1, -h_2) \quad \mbox{for all $h_1$ and $h_2$}. \]

    The random field $X^{(3)}$ is causal w.r.t.\ white noise $\epsilon^{(3)}$. Hence, due to Proposition~\ref{prop:gh1h2_ads},
    \begin{equation}\label{eq:X3h1h2-ads}
      \gamma_{X^{(3)}} (h_1, h_2) = \frac{\gamma_{X^{(3)}}(h_1, 0) \gamma_{X^{(3)}}(0, h_2)} {\gamma_{X^{(3)}}(0, 0)} 
      \quad \mbox{if $h_1 h_2 < 0$.}
    \end{equation}

    We do the same with the field $X^{(2)}$. There is $n$ and a field $X^{(4)}$ that satisfy a \eqref{eq:difeq1}-like
    equation with parameters $\theta_4 = T_n \theta_2$; these parameters satisfy the conditions for causality,
    and either
    \begin{gather*}
     \gamma_{X^{(4)}} (h_1, h_2) = \gamma_{X^{(2)}} (h_1, h_2) \quad \mbox{for all $h_1$ and $h_2$, or} \\
     \gamma_{X^{(4)}} (h_1, h_2) = \gamma_{X^{(2)}} (-h_1, h_2) =\gamma_{X^{(1)}} (h_1, -h_2) \quad \mbox{for all $h_1$ and $h_2$}, 
    \end{gather*}
    and also
    \begin{equation}\label{eq:X4h1h2-ads}
      \gamma_{X^{(4)}} (h_1, h_2) = \frac{\gamma_{X^{(4)}}(h_1, 0) \gamma_{X^{(4)}}(0, h_2)} {\gamma_{X^{(4)}}(0, 0)} 
      \quad \mbox{if $h_1 h_2 < 0$.}
    \end{equation}

    Similar relations hold for autocovariance functions of $X^{(3)}$ and $X^{(4)}$: either
    \begin{gather*}
     \gamma_{X^{(4)}} (h_1, h_2) = \gamma_{X^{(3)}} (h_1, h_2) \quad \mbox{for all $h_1$ and $h_2$, or} \\
     \gamma_{X^{(4)}} (h_1, h_2) = \gamma_{X^{(3)}} (-h_1, h_2) =\gamma_{X^{(1)}} (h_1, -h_2) \quad \mbox{for all $h_1$ and $h_2$}, 
    \end{gather*}
    This implies $\gamma_{X^{(4)}} (h_1, h_2) = \gamma_{X^{(3)}} (h_1, h_2)$ if  $h_1 =0$ or $h_2=0$, and because of
    \eqref{eq:X3h1h2-ads} and \eqref{eq:X4h1h2-ads},
    \[
      \gamma_{X^{(4)}} (h_1, h_2) = \gamma_{X^{(3)}} (h_1, h_2) \quad \mbox{if $h_1 h_2 < 0$.}
    \]
    All above implies that random fields $X^{(3)}$ and $X^{(4)}$ have the same autocovariance function.
    These fields are causal w.r.t.\ respective white noises. According to Lemma~\ref{lemma:covtotheta_causal},
    their parameters are equal, $\theta_3 = \theta_4$.

    The operators $T_1,\ldots,T_4$ are self-inverse; $T_k^2$ is the identity operator.
    Thus, $T_m \theta_1 = \theta_3 = \theta_4 = T_n \theta_2$ implies $\theta_1 = T_n \theta_3$
    and $\theta_2 = T_m \theta_4$. Thus, parameters $\theta_1$ and $\theta_2$ relate
    to each other as the $m$th and $n$th rows of Table~\ref{tab:cocoef}.
  \end{proof}

  The following corollary combines necessary and sufficient conditions for parameters to
  determine the same autocovariance function; it also takes into account where the parameters make sense. 
  \begin{corollary}
    The parameter set $\Theta$ splits into two-element, four-element and one-element classes 
    of parameters that define the same autocovariance function $\gamma_X(h_1,h_2)$ as follows:
    \begin{enumerate}
      \item
        In generic case $-ab \neq c \neq 0$, the parameter $(a,b,c,\sigma_\epsilon^2)$ belongs to a two-element class.
        It determines the same autocovariance function as the parameter $(-b/c, -a/c, 1/c, c^{-2} \sigma_\epsilon^2)$.
      \item
        In symmetric case $-ab = c \neq 0$, the parameter $(a,b,-bc,\sigma_\epsilon^2)$ belongs to a four-element class.
        All four sets of parameters listed in Table~\ref{tab:cocoefsym} determine the same autocovariance function.
      \item
        In symmetric case $a\neq 0$, $b=c=0$, the parameter $(a,0,0,\sigma_\epsilon^2)$ belongs to a two-element class.
        It determines the same autocovariance function as the parameter $(a^{-1}, 0, 0, a^{-2} \sigma_\epsilon^2)$.

        Similarly, if $b\neq 0$, then the parameter $(0,b,0,\sigma_\epsilon^2)$ belongs to two-element class
        and determines the same autocovariance function as the parameter $(0,b^{-1},0,b^{-2}\sigma_\epsilon^2)$.
      \item
        For $-ab \neq c = 0$, the parameter $(a, b, 0, \sigma_\epsilon^2)$ makes a class of its own.
        Likewise, for $a=b=c=0$, the parameter $(0, 0,0, \sigma_\epsilon^2)$ makes a class of its own.
        These are exceptional cases where the autocovariance function uniquely determines the parameters.
    \end{enumerate}
  \end{corollary}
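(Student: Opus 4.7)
The plan is to combine the necessary condition of Proposition~\ref{prop:necessary-equalacf} with a sufficiency-type analysis of which of the operators $T_1,\ldots,T_4$ from Table~\ref{tab:cocoef} (a) are well-defined at the given $\theta=(a,b,c,\sigma_\epsilon^2)$, (b) send $\theta$ into $\Theta$, and (c) preserve the autocovariance function $\gamma_X$. The operators $T_2$, $T_3$, $T_4$ require $c\neq 0$, $a\neq 0$, $b\neq 0$, respectively; when defined, their images automatically lie in $\Theta$, because $T_m\theta$ is the parameter set of a reindexing of the stationary field $X$, which is itself stationary. The identity $T_1$ and the central reflection $T_2$ always preserve $\gamma_X$, since the fields $X_{i,j}$ and $X_{-i,-j}$ share an autocovariance function; $T_3$ and $T_4$ produce instead the flip-symmetric autocovariance $\gamma_X(-h_1,h_2)$, so they preserve $\gamma_X$ exactly when it is even in each argument, i.e., when $ab+c=0$, by the first proposition of Section~\ref{sss:symmetric_case}.

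With this in hand the four cases of the corollary are routine. In case (1), $c\neq 0$ and $ab+c\neq 0$, only $T_1$ and $T_2$ preserve $\gamma_X$; they are distinct, because $T_1\theta=T_2\theta$ would force $c^2=1$ together with $a=-b/c$, which makes either $f_1$ (if $c=1$, then $a+b=0$) or $f_2$ (if $c=-1$, then $a=b$) vanish, contradicting $D>0$. In case (2), $c=-ab\neq 0$ forces $a,b,c$ all nonzero, so all four operators are admissible and all preserve $\gamma_X$; the four rows of Table~\ref{tab:cocoefsym} are distinct because $D=(1-a^2)^2(1-b^2)^2>0$ forces $a^2\neq 1$ and $b^2\neq 1$. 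In case (3) with $b=c=0$ and $a\neq 0$, we have $ab+c=0$, so $T_3$ (but neither $T_2$ nor $T_4$) is available and preserves $\gamma_X$; distinctness $a\neq a^{-1}$ amounts to $a^2\neq 1$, which follows from $D=(1-a^2)^2>0$. The variant $a=c=0,\,b\neq 0$ is analogous with $T_4$ in place of $T_3$. In case (4), either $c=0$ with $ab\neq 0$, in which case $T_2$ is undefined and $ab+c\neq 0$ rules out $T_3,T_4$, giving a singleton class, or else $a=b=c=0$ and all of $T_2,T_3,T_4$ are undefined.

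The main difficulty is organisational rather than computational: cleanly separating the necessity direction (supplied by Proposition~\ref{prop:necessary-equalacf}) from the sufficiency direction (which $T_m$ preserve $\gamma_X$), and handling the edge cases $a=0$, $b=0$, $c=0$ where some operators degenerate. Each individual distinctness check reduces to an elementary sign verification using the factorisations of $D$ already computed in Section~\ref{sss:symmetric_case} and in the subsection on the case $a=-bc$.
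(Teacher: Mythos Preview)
Your proposal is correct and follows exactly the route the paper intends: the corollary is stated without proof, to be read as a direct consequence of Proposition~\ref{prop:necessary-equalacf} (necessity) together with the discussion surrounding Tables~\ref{tab:cocoef} and~\ref{tab:cocoefsym} (sufficiency), and you have spelled out precisely those details. The case analysis, the use of the even-in-each-variable criterion $ab+c=0$ to decide when $T_3,T_4$ preserve $\gamma_X$, and the distinctness checks via the factorisations of $D$ are all sound; the only point you leave implicit is that $\{T_1,T_2,T_3,T_4\}$ acts as the Klein four-group where defined, so that the orbit through the causal representative $\theta_0$ of Proposition~\ref{prop:necessary-equalacf} coincides with the orbit through $\theta$ itself---this is straightforward to verify and does not affect the argument.
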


  Next proposition shows how the asymmetric case is split into two major subcases.
  \begin{proposition}\label{prop:3cases}
    Let $X$ be a stationary field and $\epsilon$ be a collection of zero-mean equal-variance random variables
    that satisfy \eqref{eq:difeq1}.
    \begin{enumerate}
        \item
    If $ab + c = 0$, then
    \[
      \gamma_X(h_1, h_2) = \frac{\gamma_X(h_1, 0) \gamma_X(0,h_2)} {\gamma_X(0,0)} 
        \quad \mbox{for all $h_1$ and $h_2$}.
    \]
      \item
    If $ab + c \neq 0$ and $1+c^2 > a^2+b^2$, then
        \begin{gather*}
            \gamma_X(1, -1) = \frac{\gamma_X(1, 0) \gamma_X(0,1)} {\gamma_X(0,0)} \neq \gamma_X(1, 1), \\
      \gamma_X(h_1, h_2) = \frac{\gamma_X(h_1, 0) \gamma_X(0,h_2)} {\gamma_X(0,0)} 
        \quad \mbox{for all $h_1$ and $h_2$ such that $h_1 h_2 \le 0$}.
        \end{gather*}
      \item
    If $ab + c \neq 0$ and $1+c^2 < a^2+b^2$, then
        \begin{gather*}
            \gamma_X(1, -1) \neq \frac{\gamma_X(1, 0) \gamma_X(0,1)} {\gamma_X(0,0)} = \gamma_X(1, 1), \\
      \gamma_X(h_1, h_2) = \frac{\gamma_X(h_1, 0) \gamma_X(0,h_2)} {\gamma_X(0,0)} 
        \quad \mbox{for all $h_1$ and $h_2$ such that $h_1 h_2 \ge 0$}.
        \end{gather*}
    \end{enumerate}
  \end{proposition}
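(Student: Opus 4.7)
The three cases split along the sign of $ab+c$ and the sign of $1+c^2-a^2-b^2$, and I would handle them in order of increasing difficulty.

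Case 1, $ab+c=0$, follows immediately from Proposition~\ref{prop:scsym-acf}: its closed-form expression $\gamma_X(h_1,h_2) = a^{\pm h_1} b^{\pm h_2}\sigma_\epsilon^2/(|1-a^2|\cdot|1-b^2|)$ is already a product of a function of $h_1$ and a function of $h_2$, so specializing to $h_1=0$ or $h_2=0$ expresses the marginals and $\gamma_X(0,0)$, and the claimed identity $\gamma_X(h_1,h_2) = \gamma_X(h_1,0)\gamma_X(0,h_2)/\gamma_X(0,0)$ is a direct algebraic consequence.

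For cases 2 and 3, the starting point is the identity $f_1 f_4 + f_2 f_3 = 2(1+c^2-a^2-b^2)$. Combined with $D = f_1 f_2 f_3 f_4 > 0$, the sign hypothesis on $1+c^2-a^2-b^2$ pins down the common sign of $f_1 f_4$ and $f_2 f_3$: both positive in case 2, both negative in case 3. Consulting Table~\ref{tab:7cases4}, case 2 means that the causal parameterization is one of $m\in\{1,2\}$ (so either $X_{i,j}$ itself or its point reflection $X_{-i,-j}$ is causal with respect to the appropriately transformed white noise), while case 3 means $m\in\{3,4\}$ (an axis-reflected field is causal). Applying Proposition~\ref{prop:gh1h2_ads} to the causal field yields the multiplicative formula for its autocovariance on the region $h_1 h_2 \le 0$; pulling back to $\gamma_X$ via $\gamma_X(h_1,h_2) = \gamma_X(-h_1,-h_2)$ and the evenness of $h\mapsto\gamma_X(h,0)$ and $h\mapsto\gamma_X(0,h)$ preserves ``$h_1 h_2 \le 0$'' in case 2 but flips it into ``$h_1 h_2 \ge 0$'' in case 3. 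In particular one obtains the equality at $(1,-1)$ claimed in case 2 and the equality at $(1,1)$ claimed in case 3.

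For the strict inequalities, I would argue by contradiction. Suppose the remaining equality also held, i.e.\ $\gamma_X(1,1) = \gamma_X(1,0)\gamma_X(0,1)/\gamma_X(0,0)$ in case 2, or $\gamma_X(1,-1) = \gamma_X(1,0)\gamma_X(0,1)/\gamma_X(0,0)$ in case 3. Combined with the multiplicativity already established on the corresponding half of $\mathbb Z^2$, the Yule--Walker recurrence \eqref{eq:YW-step} (applicable on an appropriate half-plane of indices by Lemma~\ref{lem:YW004}) propagates the identity inductively to every $(h_1,h_2)\in\mathbb Z^2$. Taking Fourier transforms, this means the spectral density $f_X(\nu_1,\nu_2)$ factors as $f^{(1)}(\nu_1)f^{(2)}(\nu_2)$. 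Writing $z_j = e^{2\pi\mathrm{i}\nu_j}$ and expanding $|1 - a z_1 - b z_2 - c z_1 z_2|^2$ as a Laurent polynomial in $z_1$ whose coefficients are functions of $z_2$, requiring the coefficients of $z_1^{+1}$ and $z_1^{-1}$ to be scalar (in $z_2$) multiples of the $z_1^0$-coefficient, and then matching the $z_2^{\pm 1}$ coefficients, forces $c=-ab$, contradicting $ab+c\ne 0$. The main obstacle is exactly this last algebraic step: the factorization is of the modulus $|g_1|^2$ rather than of $g_1$ itself, so one must be careful that matching coefficients of the bilinear trigonometric polynomial indeed reduces to the single equation $ab+c=0$.
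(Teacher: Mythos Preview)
Your treatment of the equalities --- Case~1 via Proposition~\ref{prop:scsym-acf}, Cases~2 and~3 via the sign analysis of $f_1 f_4 + f_2 f_3 = 2(1+c^2-a^2-b^2)$, Table~\ref{tab:7cases4}, and pullback of Proposition~\ref{prop:gh1h2_ads} from the causal reparametrization --- is exactly the paper's argument.

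For the strict inequalities you take a genuinely different route. The paper stays with the causal field $X^{(3)}$ it has already constructed: the contradictory equality on $\gamma_X$ transfers to $\gamma_{X^{(3)}}(1,1) = \gamma_{X^{(3)}}(1,0)\,\gamma_{X^{(3)}}(0,1)/\gamma_{X^{(3)}}(0,0)$, and then the explicit Yule--Walker inversion formulas of Lemma~\ref{lemma:covtotheta_causal} read off $c_3 = -a_3 b_3$ from just those four numbers; a direct check of the four transformations $T_m$ in Table~\ref{tab:cocoef} shows each one carries $c_3 = -a_3 b_3$ back to $c = -ab$. Your plan instead propagates the multiplicative identity to all of $\mathbb Z^2$ via \eqref{eq:YW-step} and extracts $c=-ab$ from the resulting spectral factorization. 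The endpoint algebra is actually cleaner than you fear: comparing the coefficients of $z_1 z_2$ and of $z_1 z_2^{-1}$ in $|1 - a z_1 - b z_2 - c z_1 z_2|^2$ against those in $K|1-\alpha z_1|^2|1-\beta z_2|^2$ gives $-c = K\alpha\beta = ab$ immediately. What does need more care than you indicate is the propagation itself: which clause of Lemma~\ref{lem:YW004} applies depends on the sub-case $m\in\{1,2,3,4\}$, and for $m\neq 1$ the recurrence \eqref{eq:YW-step} runs the wrong way, so you must solve it for $\gamma_X(h_1-1,h_2-1)$, $\gamma_X(h_1-1,h_2)$, or $\gamma_X(h_1,h_2-1)$ by dividing through by $c$, $a$, or $b$ respectively (nonzero in the relevant sub-case by Table~\ref{tab:7cases4}) and induct in the opposite direction. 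This is all doable, but it reinstates exactly the four-way case split that the paper's approach sidesteps by working on the causal side and invoking Lemma~\ref{lemma:covtotheta_causal} once.
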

  
  \begin{proof}
    The symmetric case $ab + c = 0$ has been studied in Section~\ref{sss:symmetric_case}.
    The desired equality follows from Proposition~\ref{prop:scsym-acf}.

    Let us borrow some notation from the proof of Proposition~\ref{prop:necessary-equalacf},
    with $X^{(1)} = X$.
    Let $\theta_3 = (a_3, b_3, c_3, \sigma_{\epsilon,3}^2) = T_m (a,b,c,\sigma_\epsilon^2)$
    be a set of parameters obtained from Lemma~\ref{lemma:1of4} that satisfy the causality conditions
    and let $X^{(3)}$ be a stationary random field defined by this parameterization;
    $X^{(3)}$ is causal w.r.t.\ the respective white noise.
    The sign of
    \[
      1 + c^2 - a^2 - b^2 = \frac{f_1 f_4 + f_2 f_3}{2}
    \]
    determines the relation between the autocovariance functions of fields $X$ and $X^{(3)}$.
    If $1+c^2 > a^2 + b^2$, then either $X^{(3)} =  X$ or $X^{(3)}_{i,j} = X_{-i,-j}$;
    in either case the fields $X$ and $X^{(3)}$ have the same autocovariance function.
    Otherwise, if $1+c^2 < a^2 + b^2$, then either $X^{(3)}_{i,j} = X_{-i,j}$
    or $X^{(3)}_{i,j} = X_{i,-j}$; in both cases the autocovariance functions are
    one-variable symmetric, $\gamma_{X^{(3)}}(h_1,h_2) =  \gamma_{X}(h_1,-h_2)$.

    As $X^{(3)}$ is causal, due to Proposition~\ref{prop:gh1h2_ads}
    \[
      \gamma_{X^{(3)}}(h_1,h_2) = \frac{\gamma_{X^{(3)}}(h_1,0)\gamma_{X^{(3)}}(0,h_2)}
      {\gamma_{X^{(3)}}(0,0)}
      \quad \mbox{if $h_1 h_2 \le 0$} .
    \]
    Hence, and from the relation between $\gamma_{X}$ and $\gamma_{X^{(3)}}$
    all the desired equalities follow.

    Now prove the inequalities. Assume that the would-be inequality is actually an equality, 
    that is $\gamma_X(1,1) = \gamma_X(1, 0) \gamma_X(0,1) / \gamma_X(0,0)$ in case $1+c^2>a^2 + b^2$
    or $\gamma_X(1,-1) = \gamma_X(1, 0) \gamma_X(0,1) / \gamma_X(0,0)$ in case $1+c^2<a^2 + b^2$.
    Then the autocovariance function of the field $X^{(3)}$ satisfies
    \[
      \gamma_{X^{(3)}}(1, 1) = \frac{\gamma_{X^{(3)}}(1, 0) \gamma_{X^{(3)}}(0,1)} {\gamma_{X^{(3)}}(0,0)}.
    \]
    The field $X^{(3)}$ is causal, and formulas in Lemma~\ref{lemma:covtotheta_causal}
    yield
    \[
      a_3 = \frac{\gamma_{X^{(3)}}(1,0)}{\gamma_{X^{(3)}}(0,0)}, \qquad
      b_3 = \frac{\gamma_{X^{(3)}}(0,1)}{\gamma_{X^{(3)}}(0,0)}, \qquad
      c_3 = -\frac{\gamma_{X^{(3)}}(1,0)\gamma_{X^{(3)}}(0,1)}{\gamma_{X^{(3)}}(0,0)^2}
          = - a_3 b_3 .
    \]
    Examining 4 cases, we can verify a similar relation for the original parameterization, $c = -a b$.
  \end{proof}

  \section{Pure nondeterminism}
  \label{sec:pnd}
  \subsection{Sufficient condition}
    We borrow the definition of a pure nondeterministic random field from \cite{Tjostheim:1978}.
    For the field on a planar lattice, the definition rewrites as follows,
    in term of subspaces of the Hilbert space of finite-variance random variables $L^2(\Omega, \mathcal{F}, \mathsf{P})$.
    \begin{definition}
        Let $X$ be a field of random variables of finite variances.
        $X$ is called \textit{purely nondeterministic} if and only if
        \begin{align}
          \closedspan_{i,j} X_{i,j} = \closedspan_{i,j} \Bigl(
            & \closedspan\{X_{r,s} : r\le i\} \cap
              (\closedspan\{X_{r,s} : r<i\})^\bot
            \cap \mbox{} \nonumber \\ &\quad \cap
              \closedspan\{X_{r,s} : s \le j\} \cap
              (\closedspan\{X_{r,s} : s < j\})^\bot \Bigr) .
          \label{eq:defpndT}      
        \end{align}
     \end{definition}
     Here $\closedspan_{i,j} X_{i,j}$ is the smallest (closed) subspace of $L^2(\Omega, \mathcal{F}, \mathsf{P})$
     that contains all observations of the field $X$, while $\closedspan\{X_{r,s} : r\le i\}$
     is a similar minimal subspace which contains all observations $X_{r,s}$ of the field $X$
     with first index $r\le i$. The outer $\closedspan$ in the right-hand size of \eqref{eq:defpndT}
     is the smallest subspace that contains all subspaces used as an argument.

     Pure nondeterminism is a sufficient condition for a stationary field to representable in form 
     \eqref{eq:causal_eq}, where $\epsilon$ is some collection of uncorrelated variables $\epsilon$,
     and coefficients $\psi_{k,l}$ satisfy $\sum_{k=0}^\infty \sum_{l=0}^\infty \psi_{k,l}^2 < \infty$.
     This follows from \cite[Theorem~2.1]{Tjostheim:1978}, while the necessity is easy to verify.

     \begin{proposition}
       Let $X$ be a stationary field and $\epsilon$ be a collection of
       uncorrelated zero-mean unit-variance random variables that satisfy \eqref{eq:difeq1}.
       If $1 + c^2 > a^2 + b^2$, then $X$ is purely nondeterministic.
     \end{proposition}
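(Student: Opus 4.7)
The plan is to split the argument according to whether the causality condition holds. I first note the identity $2(1+c^2-a^2-b^2)=f_1 f_4 + f_2 f_3$, which was already used in the proof of Proposition~\ref{prop:3cases}; combined with $D=f_1 f_2 f_3 f_4>0$, it shows that the hypothesis $1+c^2>a^2+b^2$ corresponds precisely to rows~1 and~2 of Table~\ref{tab:7cases4}.

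\textit{Causal case.} Assume $f_1,f_2,f_3,f_4>0$, so by Proposition~\ref{prop:withpsikl-causal} the field $X$ admits the one-sided representation $X_{i,j}=\sum_{k,l\ge 0}\psi_{k,l}\epsilon_{i-k,j-l}$ with $\psi_{0,0}=1$. I would show that the inner subspace on the right of \eqref{eq:defpndT} equals the one-dimensional space $\closedspan\{\epsilon_{i,j}\}$. Causality gives $\closedspan\{X_{r,s}:r\le i\}\subseteq\closedspan\{\epsilon_{r,s}:r\le i\}$, while rearranging \eqref{eq:difeq1} as $\epsilon_{r,s}=X_{r,s}-aX_{r-1,s}-bX_{r,s-1}-cX_{r-1,s-1}$ gives the reverse inclusion. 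Because the $\epsilon_{r,s}$ are uncorrelated, the $L^2$-complement of $\closedspan\{X_{r,s}:r\le i-1\}$ inside $\closedspan\{X_{r,s}:r\le i\}$ is exactly $\closedspan\{\epsilon_{i,s}:s\in\mathbb{Z}\}$, and symmetrically for the $j$-strip. Intersecting the row- and column-innovation subspaces leaves only $\closedspan\{\epsilon_{i,j}\}$. Taking $\closedspan_{i,j}$ recovers $\closedspan\{\epsilon_{i,j}:i,j\in\mathbb{Z}\}$, which equals $\closedspan_{i,j}X_{i,j}$ by the mutual $L^2$-representability of the two fields.

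\textit{Non-causal case with $1+c^2>a^2+b^2$.} Here the sign pattern must be row~2 of Table~\ref{tab:7cases4}, and Table~\ref{tab:cocoef} shows that the reflected field $\tilde X_{i,j}:=X_{-i,-j}$ satisfies a \eqref{eq:difeq1}-type equation with new coefficients $(-b/c,-a/c,1/c)$ driven by the white noise $-c^{-1}\epsilon_{1-i,1-j}$, and that these new coefficients satisfy the causality conditions. Applying the causal case to $\tilde X$ shows that $\tilde X$ is purely nondeterministic. I would then transfer this back to $X$: since $\gamma_X(h_1,h_2)=\gamma_X(-h_1,-h_2)$, the fields $X$ and $\tilde X$ share their autocovariance function, so the assignment $X_{i,j}\mapsto\tilde X_{i,j}$ extends to a bijective $L^2$-isometry between $\closedspan_{i,j}X_{i,j}$ and $\closedspan_{i,j}\tilde X_{i,j}$. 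This isometry preserves closed spans and orthogonal complements, so the equality \eqref{eq:defpndT} for $\tilde X$ pulls back to the analogous equality for $X$.

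The main obstacle I expect is the causal case: identifying the double-intersection in \eqref{eq:defpndT} with $\closedspan\{\epsilon_{i,j}\}$. In particular, characterizing the ``row-$i$ innovation subspace'' as $\closedspan\{\epsilon_{i,s}:s\in\mathbb{Z}\}$ uses both causality (so that the half-plane span of $X$ coincides with the corresponding half-plane span of $\epsilon$) and the uncorrelatedness of the noise (so that removing rows corresponds to an orthogonal decomposition). The reduction in the non-causal case is otherwise routine once one accepts that pure nondeterminism is a second-order property preserved under covariance-preserving isometries of the generated Hilbert space.
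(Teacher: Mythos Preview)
Your proposal is correct. The causal case matches the paper's argument exactly: identify the half-plane spans of $X$ with the corresponding half-plane spans of $\epsilon$, show the inner intersection in \eqref{eq:defpndT} reduces to $\colspan\{\epsilon_{i,j}\}$, and take the outer closed span.

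For the non-causal subcase your route differs from the paper's. You reflect the field to $\tilde X_{i,j}=X_{-i,-j}$, apply the causal case to $\tilde X$, and then pull the equality \eqref{eq:defpndT} back along the covariance-preserving isometry $X_{i,j}\mapsto\tilde X_{i,j}$. The paper instead stays with $X$: it observes that in the two sign patterns $f_1<0,\,f_2>0,\,f_3>0,\,f_4<0$ or $f_1>0,\,f_2<0,\,f_3<0,\,f_4>0$ one has $c\neq 0$, defines
\[
  \tilde\epsilon_{i,j}=X_{i,j}+\tfrac{b}{c}X_{i-1,j}+\tfrac{a}{c}X_{i,j-1}-\tfrac{1}{c}X_{i-1,j-1},
\]
checks (via spectral density) that $\tilde\epsilon$ is a white noise, and verifies that $X$ itself satisfies the causality conditions of Proposition~\ref{prop:withpsikl-causal} with coefficients $(-b/c,-a/c,1/c)$ and driving noise $\tilde\epsilon$. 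Then the causal-case argument applies directly to $X$ without any reflection or isometry. Your approach trades the spectral-density verification of $\tilde\epsilon$ for an abstract Hilbert-space isometry argument; the paper's approach is a bit more direct since it never leaves the field $X$, while yours makes transparent that pure nondeterminism is a second-order property invariant under autocovariance-preserving relabelings.
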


     \begin{proof}
       We use notation $D$ from Lemma~\ref{lemma:denominator} and notation
       $f_1,\ldots,f_4$ from \eqref{eq:deff1..f4}.

       The existence of a stationary solution to \eqref{eq:deff1..f4} imply that $D>0$.
       As shown in the proof of Proposition~\ref{prop:3cases}, $D>0$ and $1+c^2 > a^2 + b^2$
       holds true if three cases, $f_1>0$, $f_2>0$, $f_3>0$, $f_4>0$
       or $f_1<0$, $f_2>0$, $f_3>0$, $f_4<0$ or $f_1>0$, $f_2<0$, $f_3<0$, $f_3<0$.

       If $f_1>0$, $f_2>0$, $f_3>0$, $f_4>0$,
       then the field $X$ is causal with respect to white noise $\epsilon$.
       Equations \eqref{eq:difeq1} and \eqref{eq:causal_eq} imply that
       \[
         \closedspan\{X_{r,s} : r\le i\} = \closedspan{\epsilon_{r,s} : r\le i}, \quad\;
         \closedspan\{X_{r,s} : s\le j\} = \closedspan{\epsilon_{r,s} : s\le j},
       \]
       whence
       \begin{multline*}
             \closedspan\{X_{r,s} : r\le i\} \cap
              (\closedspan\{X_{r,s} : r<i\})^\bot
            \cap \mbox{} \\ \cap 
            \closedspan\{X_{r,s} : s \le j\} \cap
              (\closedspan\{X_{r,s} : s < j\})^\bot
              = \colspan \epsilon_{i,j} .
       \end{multline*}
       Thus,
       \begin{multline*}
          \closedspan_{i,j} \Bigl(
             \closedspan\{X_{r,s} : r\le i\} \cap
              (\closedspan\{X_{r,s} : r<i\})^\bot
            \cap \mbox{} \\ \cap
              \closedspan\{X_{r,s} : s \le j\} \cap
              (\closedspan\{X_{r,s} : s < j\})^\bot \Bigr) .
          =
          \closedspan_{i,j} \epsilon_{i,j} = \closedspan_{i,j} X_{i,j},
       \end{multline*}
       and \eqref{eq:defpndT} holds true.

       Now consider two other cases, where $-f_1$, $f_2$, $f_3$ and $-f_4$ are nonzero and of the same case.
       In these cases $c\neq 0$, and
       \[
       \tilde \epsilon_{i,j} = X_{i,j} + \frac{b}{c} X_{i-1,j} + \frac{a}{c} X_{i,j-1} - \frac{1}{c} X_{i-1,j-1}
       \]
       is also a collection of uncorrelated zero-mean equal-variance random variables
       (to verify this, one can compute the spectral density of $\tilde\epsilon$).
       Causality condition in Proposition~\ref{prop:withpsikl-causal} can be  easily verified;
       the stationary field $X$ is causal w.r.t.\ white noise $\tilde\epsilon$.
       The field $X$ is causal in these cases likewise.
     \end{proof}

  \subsection{Counterexample to Tj{\o}stheim}
  It would be tempting to use the sufficient condition for pure nondeterminism
  from \cite[Theorem 3.1]{Tjostheim:1978}; however, that condition is not correct.
  We construct a counterexample.

  Let $0<|\theta| < 1$.
  Let $X$ be a stationary field that satisfies the equation
  \[
    X_{i-1,j} = \theta X_{i,j-1} + \epsilon_{i,j},
  \]
  where $\epsilon$ is a collection of independent random variables
  with standard normal distribution, $\epsilon_{i,j} \sim \mathcal{N}(0,1)$.

  The spectral density of the field $X$ is
  \[
    f_X(\nu_1,\nu_2) = \frac{1}
    {|e^{2\pi \mathrm{i} \nu_1} - \theta e^{2 \pi \mathrm{i} \nu_2}
    |^2} \, .
  \]
  The denominator $|e^{2\pi \mathrm{i} \nu_1} - \theta e^{2 \pi \mathrm{i} \nu_2}|^2$
  attains only positive values and is a continuous function.
  Thus, the field $X$ satisfies the sufficient condition stated
  in \cite[Theorem~3.1]{Tjostheim:1978}.

  In the filed $X$ the diagonals $\{X_{i,j}, \; i+j=n\}$
  are independent for different $n$.
  The random variables on each diagonal are jointly distributed as
  values of a centered Gaussian AR(1) process with the coefficient $\theta$.

  Let
  \[
    \tilde \epsilon_{i,j} = X_{i,j-1} - \theta X_{i-1,j}.
  \]
  Then $\tilde\epsilon$ is a collection of uncorrelated zero-mean unit-variance variables.
  (Since $\tilde\epsilon$ is a Gaussian field, it is a collection independent variables with distribution
  $\mathcal{N}(0,1)$.)

  The field $X$ can be represented as 
  \[
    X_{i,j} = \sum_{k=0}^\infty \theta^k \epsilon_{i+k+1,j-k} = \sum_{k=0}^\infty \theta^k \tilde\epsilon_{i-k, j+k+1} .
  \]
  Hence,
  \[
    \tilde\epsilon_{i,j} = 
    - \theta \epsilon_{i,j} + (1 - \theta^2) \sum_{k=0}^\infty \epsilon_{i+k+1,j-k-1} 
  \]
  These representations imply that
  \[
    \closedspan\{X_{r,s} : r \le i\} = \closedspan\{\tilde\epsilon_{r,s} : r \le i\}, \qquad
    \closedspan\{X_{r,s} : s \le j\} = \closedspan\{\epsilon_{r,s} : s \le j\} .
  \]
  Hence
       \begin{multline*}
             \closedspan\{X_{r,s} : r\le i\} \cap
              (\closedspan\{X_{r,s} : r<i\})^\bot
            \cap \mbox{} \\ \cap 
            \closedspan\{X_{r,s} : s \le j\} \cap
              (\closedspan\{X_{r,s} : s < j\})^\bot
              = \closedspan_s \tilde\epsilon_{i,s} \cap \closedspan_r \epsilon_{r,j} .
       \end{multline*}
  Now prove that $\closedspan_s \tilde\epsilon_{i,s} \cap \closedspan_r \epsilon_{r,j}$
  is a trivial subspace. Let $\zeta \in \closedspan_s \tilde\epsilon_{i,s} \cap \closedspan_r \epsilon_{r,j}$,
  \[
    \zeta = \sum_{s=-\infty}^\infty k_s \tilde\epsilon_{i,s} = \sum_{r=-\infty}^\infty c_r \epsilon_{r,j} .
  \]
  Here the coefficients satisfy $\sum_{s=-\infty}^\infty  k_s^2 = \sum_{r=-\infty}^\infty c_r^2 < \infty$;
  the series converge in mean squares.
  The covariance between $\tilde\epsilon_{i,s}$ and $\epsilon_{r,j}$ is
  \[
    \ME\tilde\epsilon_{i,s} \epsilon_{r,j} = \begin{cases}
      -\theta & \mbox{if $i=r$ and $s=j$,} \\
      \theta^{r-i-1} (1 - \theta^2) & \mbox{if $r-i = s-j > 0$,} \\
      0 & \mbox{otherwise.}
    \end{cases}
  \]
  Anyway, $\ME\tilde\epsilon_{i,s} \epsilon_{r,j} = 0$ if $r-i \neq s-j$, 
  and $|\ME\tilde\epsilon_{i,s} \epsilon_{r,j}| < 1$ if $r-i = s-j$.

  Compute $\ME \tilde\epsilon_{i,j-n} \zeta$ and $\ME \zeta \epsilon_{i-n,j}$ using two different expressions:
  \begin{gather*}
    \sum_{s=-\infty}^\infty k_s \ME \tilde\epsilon_{i,j-n} \tilde\epsilon_{i,s} =\ME \tilde\epsilon_{i,j-n} \zeta =
    \sum_{r=-\infty}^\infty c_r \ME \tilde\epsilon_{i,j-n} \epsilon_{r,j}, \\
    \sum_{s=-\infty}^\infty k_s \ME \tilde\epsilon_{i,s} \epsilon_{i-n,j}=\ME \zeta \epsilon_{i-n,j} =
    \sum_{r=-\infty}^\infty c_r \ME \epsilon_{r,j} \epsilon_{i-n,j}.
  \end{gather*}
  In the series, the only nonzero term might be where $s=j-n$ and $r=i-n$. Thus,
  \begin{equation}\label{eq:2eqs-2}
    k_{j-n} = c_{i-n} \ME \tilde\epsilon_{i,j-n} \epsilon_{i-n,j}, \quad
    k_{j-n} \ME \tilde\epsilon_{i,j-n} \epsilon_{i-n,j} = c_{i-n} .
  \end{equation}
  As $|\ME \tilde\epsilon_{i,j-n} \epsilon_{i-n,j}| < 1$, \eqref{eq:2eqs-2} imply $k_{j-n} = c_{i-n} = 0$.
  As this holds true for all integer $n$, $k_s = c_r = 0$ for all $s$ and $r$,
  and $\zeta = 0$ almost surely.
  Thus,
       \begin{multline*}
             \closedspan\{X_{r,s} : r\le i\} \cap
              (\closedspan\{X_{r,s} : r<i\})^\bot
            \cap \mbox{} \\ \cap 
            \closedspan\{X_{r,s} : s \le j\} \cap
              (\closedspan\{X_{r,s} : s < j\})^\bot
              = \{0\},
       \end{multline*}
  and the right-hand side of \eqref{eq:defpndT} is the trivial subspace.
  Thus, the random field $X$ is not purely nondeterministic.

\section{Conclusion}
\label{sec:conclusion}
We considered AR(1) model on a plane.
We found conditions (in terms of the regression coefficients)
under which the autoregressive equation has a stationary solution $X$.
As for the autocovariance function of the stationary solution $X$,
we presented a simple formula for it at some points, and proved
Yule--Walker equations.
These allow to compute the autocovariance function recursively at all points.

We found conditions under which the stationary solution to
the autoregressive equation satisfies the causality condition with respect
to the underlying white noise.
These conditions also appear to be sufficient conditions for
stability of the deterministic problem of solving a recursive
equations in a quadrant, with preset values on the border of the quadrant.

We described sets of parameters (the coefficients and the variance of
the underlying white noise) where different parameters determine the same autocovariance function of the stationary solution.

We found sufficient conditions for the stationary solution $X$ to be a pure nondeterministic random field. This condition is related to the causality condition  with respect to some (non-fixed) white noise, which is called innovations; in particular, the innovations need not coincide with the white noise in the autoregressive equation.

The causality condition and pure-nondeterministic property seem to be too restrictive because the coordinate-wise order between coordinates of
$\epsilon$ and $X$ in the representation \eqref{eq:causal_eq} is a partial order. More general representation with lexical order,
which is a total order,
is suggested in \cite[Section~6]{Whittle:1954}:
\[
  X_{i,j} = \sum_{k=1}^\infty \sum_{l=-\infty}^\infty \psi_{k,l} X_{i-k,j-l} + \sum_{l=0}^\infty \psi_{0,l} X_{i,j-l} .
\]
Further discussion can be found in \cite{Kallianpur:1981,Tjostheim:1983}.

With exception for the identifiability topic, we did not consider the estimation at all. 

Most results of this paper are necessary and sufficient conditions for some properties of the stationary field $X$. However, for pure nondeterminism of $X$ and for stability of the deterministic equation, we obtained only sufficient conditions. Obtaining necessary conditions is an interesting open problem.

\section{Appendix: Auxiliary results}
\label{sec:appendix}
\subsection{Existence of a random field with given spectral density}
Next lemma is used on the proof of Proposition~\ref{prop:existance-eX-newps}.
It was a modification of the similar result stated for stochastic processes \cite{ShumSt}.
\begin{lemma}\label{lem:existence-field-dessdens}
  Let $f(\nu_1, \nu_2)$ be an even integrable function $\left[-\frac12,\frac12\right]^2
  \to [0,\infty]$,
  that is
  \begin{gather*}
    f(\nu_1,\nu_2) = f(-\nu_1, -\nu_2) \ge 0  \quad \mbox{for all $\nu1,\nu_2\mathbin{\in} [-\frac12, \frac12]$}, \\
    \iint_{[-1/2,\,1/2]^2} f(\nu_1, \nu_2) \, d\nu_1 \, d\nu_2 < \infty .
  \end{gather*}
  Then there exists a Gaussian stationary field $\{X_{i,j}, \; i,j\mathbin{\in}\mathbb{Z}\}$
  on some (specially constructed) probability space that has spectral density $f(\nu_1, \nu_2)$.
\end{lemma}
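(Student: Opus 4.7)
The plan is to define a candidate covariance function as the Fourier transform of $f$, verify it is real, symmetric and nonnegative definite, and then invoke Kolmogorov's extension theorem to obtain a mean-zero Gaussian field on $\mathbb{Z}^2$ with exactly this covariance structure.

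First I would set
\[
  \gamma(h_1, h_2) = \int_{-1/2}^{1/2} \int_{-1/2}^{1/2} e^{2\pi \mathrm{i}(h_1 \nu_1 + h_2 \nu_2)} f(\nu_1, \nu_2) \, d\nu_1 \, d\nu_2
\]
for all $(h_1,h_2) \in \mathbb{Z}^2$. The integrability of $f$ gives $|\gamma(h_1,h_2)| \le \iint f < \infty$, so $\gamma$ is well defined and bounded. The evenness assumption $f(\nu_1,\nu_2) = f(-\nu_1,-\nu_2)$ implies, by the change of variable $(\nu_1,\nu_2) \mapsto (-\nu_1,-\nu_2)$, that $\gamma(h_1,h_2) \in \mathbb{R}$ and $\gamma(h_1,h_2) = \gamma(-h_1,-h_2)$, which is exactly the symmetry a covariance function of a real stationary field must satisfy.

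Next I would verify nonnegative definiteness. For any finite set of indices $(i_1,j_1),\ldots,(i_n,j_n) \in \mathbb{Z}^2$ and any complex scalars $c_1,\ldots,c_n$,
\[
  \sum_{k=1}^n \sum_{l=1}^n c_k \bar c_l \, \gamma(i_k - i_l, j_k - j_l)
  = \int_{-1/2}^{1/2} \int_{-1/2}^{1/2} f(\nu_1,\nu_2) \left| \sum_{k=1}^n c_k e^{2\pi \mathrm{i}(i_k \nu_1 + j_k \nu_2)} \right|^2 d\nu_1 \, d\nu_2 \ge 0,
\]
where I used $f \ge 0$ and the factoring of the double sum. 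Thus $\gamma$ is a (real, symmetric) nonnegative definite kernel on $\mathbb{Z}^2 \times \mathbb{Z}^2$ depending only on differences.

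Now I would invoke Kolmogorov's extension theorem: for any nonnegative definite function $\gamma$ on a countable index set there exists a centered Gaussian process whose covariance equals $\gamma$. Concretely, for each finite $\Lambda \subset \mathbb{Z}^2$ the matrix $[\gamma((i,j)-(k,l))]_{(i,j),(k,l) \in \Lambda}$ is a valid covariance matrix of a centered Gaussian vector, and these finite-dimensional distributions are consistent, so on $(\mathbb{R}^{\mathbb{Z}^2}, \mathcal{B}^{\otimes \mathbb{Z}^2})$ equipped with the Kolmogorov-extended measure there is a Gaussian field $\{X_{i,j}\}$ with $\ME X_{i,j} = 0$ and $\ME X_{i,j} X_{k,l} = \gamma(i-k, j-l)$. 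Stationarity in the strict sense follows since translations preserve the finite-dimensional Gaussian distributions, and by construction $\gamma$ is the inverse Fourier transform of $f$, so $f$ is the spectral density of $X$. The only nontrivial step is the nonnegative definiteness argument above; the rest is essentially a citation of Kolmogorov's theorem.
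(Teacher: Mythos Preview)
Your proof is correct but follows a genuinely different route from the paper. The paper constructs the field explicitly as a Wiener-type integral: it takes two independent Brownian sheets $W_1,W_2$ on $[0,1]^2$, extends $f$ periodically, and sets
\[
  X_{i,j}=\iint_{[0,1]^2}\cos(2\pi(i\nu_1+j\nu_2))\sqrt{f(\nu_1,\nu_2)}\,d^2W_1
         +\iint_{[0,1]^2}\sin(2\pi(i\nu_1+j\nu_2))\sqrt{f(\nu_1,\nu_2)}\,d^2W_2,
\]
then reads off the covariance from the isometry of the stochastic integral. Your argument instead defines the candidate covariance $\gamma$ as the Fourier transform of $f$, checks realness, symmetry and nonnegative definiteness directly, and appeals to Kolmogorov's extension theorem.

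Your approach is arguably lighter: it avoids having to set up Brownian sheets and planar Wiener integrals, and the nonnegative-definiteness calculation is a one-liner. The paper's construction, on the other hand, gives a concrete spectral representation of $X$ on a single explicit probability space, which can be convenient if one later wants joint constructions (e.g.\ building $X$ and $\epsilon$ simultaneously from the same white noise). For the purpose of this lemma as stated, either argument is entirely adequate; the only point worth noting in your write-up is that the last sentence (``$f$ is the spectral density of $X$'') is immediate from the paper's definition \eqref{eq:def-sd-1}, so no appeal to uniqueness of the spectral measure is needed.
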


\begin{proof}
  Let us construct a probability space with
  two Brownian fields on a plane $\{W_k(t,s),\; t,s\mathbin{\in}[0,1]\}$
  (a zero-mean Gaussian field with covariance function $\cov(W_k(t_1,s_1),\, W_k(t_2,s_2)) = \min(t_1,t_2) \min(s_1,s_2)$
  \quad $k=1,\, 2$).

  Extend the function $X$ by periodicity, $f(\nu_1, \nu_2) = f(\nu_1-1, \nu_2)$ if $\frac12 < \nu_1 \le 1$,
  and then $f(\nu_1, \nu_2-1) = f(\nu_1-1, \nu_2)$ if $\frac12 < \nu_2 \le 1$.

  Denote
  \begin{align*}
    X_{i,j} &= \iint_{[0,1]^2} \cos(2\pi(i \nu_1 + j \nu_2))
    \sqrt{f(\nu_1,\nu_2)} \, d^2 W_1(\nu_1, \nu_2) + \mbox{} \\ &+
               \iint_{[0,1]^2} \sin(2\pi(i \nu_1 + j \nu_2))
    \sqrt{f(\nu_1,\nu_2)} \, d^2 W_2(\nu_1, \nu_2).
  \end{align*}
  The constructed field is zero-mean Gaussian as the integral of nonstochastic kernel with respect Gaussian field.
  The autocovariance function is as expected,
  \begin{align*}
    \cov(X_{i,j}, X_{i+h_1,j+h_2})
      &=
      \iint_{[0,1]^2} \cos(2 \pi (h_1 \nu_1 + h_2 \nu_2)) f(\nu_1, \nu_2) \, d\nu_1 \, d\nu_2
      = \\ &=
      \iint_{[-1/2,1/2]^2} \exp(2 \pi \mathrm{i} (h_1 \nu_1 + h_2 \nu_2)) f(\nu_1, \nu_2) \, d\nu_1 \, d\nu_2 .
  \end{align*}
\end{proof}

\subsection{Summation}
The next two propositions are used implicitly when we use double-series notation.
\begin{proposition}\label{prop:absolute-double-series}
  Let $\{\xi_{i,j},\;  i, j\in\mathbb{Z}\}$ be a collection of random variables
  bounded in mean squares, $\sup_{i,j} \ME \xi_{i,j}^2 < \infty$.
  Let $\{\xi_{i,j},\;  i, j\in\mathbb{Z}\}$ be a collection of real numbers such that
  ${\sum\sum}_{i,j=-\infty}^\infty |a_{i,j}| < \infty$.
  Then the double series ${\sum\sum}_{i,j=-\infty}^\infty a_{i,j} \xi_{i,j}$
  converges in mean squares and almost surely. The limit it the same for both types of convergence
  (up to equal-almost-surely equivalence);
  it also does not depend on the order the terms of the double series are added.
\end{proposition}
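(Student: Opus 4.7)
The plan is to treat convergence in mean squares and almost-sure convergence separately, and then reconcile the two limits by passing to a common subsequence. Throughout, let $M = \sup_{i,j}(\ME \xi_{i,j}^2)^{1/2}$, which is finite by hypothesis, and $A = \sum_{i,j} |a_{i,j}| < \infty$. I will also treat the double sum as a net of partial sums indexed by finite subsets of $\mathbb{Z}^2$ ordered by inclusion; this way both existence of the limit and its independence of the order of summation drop out of the same argument.

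First I would establish mean-squares convergence. For two finite sets $F_1, F_2 \subset \mathbb{Z}^2$, the $L^2$-triangle inequality combined with $\|\xi_{i,j}\|_{L^2}\le M$ gives
\[
  \Bigl\| \sum_{(i,j)\in F_1} a_{i,j}\xi_{i,j} - \sum_{(i,j)\in F_2} a_{i,j}\xi_{i,j} \Bigr\|_{L^2}
  \;\le\; M \sum_{(i,j)\in F_1 \triangle F_2} |a_{i,j}|.
\]
Since $\sum_{i,j}|a_{i,j}|<\infty$, the right-hand side is small whenever $F_1$ and $F_2$ both contain a sufficiently large fixed finite set, so the net is Cauchy and converges in $L^2$ to a single element $S$. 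The partial sums along any enumeration of $\mathbb{Z}^2$ are a subnet and hence converge in $L^2$ to the same $S$, giving order-independence of the $L^2$-limit.

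Next I would obtain almost-sure absolute convergence by applying Tonelli's theorem to the non-negative double series $\sum_{i,j}|a_{i,j}||\xi_{i,j}|$:
\[
  \ME \sum_{i,j}|a_{i,j}||\xi_{i,j}|
  \;=\; \sum_{i,j}|a_{i,j}|\,\ME|\xi_{i,j}|
  \;\le\; M A \;<\; \infty,
\]
using Lyapunov's inequality $\ME|\xi_{i,j}|\le (\ME\xi_{i,j}^2)^{1/2}\le M$. Hence $\sum_{i,j}|a_{i,j}||\xi_{i,j}|<\infty$ almost surely, so for a.e.\ $\omega$ the series $\sum_{i,j} a_{i,j}\xi_{i,j}(\omega)$ converges absolutely, and by the rearrangement theorem for absolutely convergent series it converges unconditionally to some $S'(\omega)$ that does not depend on the order of summation.

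Finally I would identify $S=S'$ almost surely. Fixing any enumeration of $\mathbb{Z}^2$, the sequential partial sums converge in $L^2$ to $S$ and almost surely to $S'$; extracting a subsequence that converges almost surely to the $L^2$-limit $S$ forces $S=S'$ a.s. I do not expect any genuine obstacle here: the whole proposition is a routine synthesis of the $L^2$-triangle inequality and Tonelli's theorem, and the only subtlety worth care is being precise about the meaning of ``double series over $\mathbb{Z}^2$'', which is handled cleanly by the net formulation used in the first step.
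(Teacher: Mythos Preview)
Your argument is correct and complete. Note, however, that the paper does not actually prove this proposition: it is stated in the appendix as a standard auxiliary fact (alongside the companion Proposition on square-summable coefficients) and is used implicitly throughout without proof. Your route---Cauchy net in $L^2$ via the triangle inequality for mean-square convergence, Tonelli plus $\ME|\xi_{i,j}|\le(\ME\xi_{i,j}^2)^{1/2}$ for almost-sure absolute convergence, and identification of the two limits via an a.s.-convergent subsequence---is the standard one and would serve perfectly well as the missing proof.
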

\begin{remark}\mbox{}
  \begin{enumerate}
    \item
      Under conditions of Proposition~\ref{prop:absolute-double-series},
      the double series ${\sum\sum}_{i,j=-\infty}^\infty a_{i,j} \xi_{i,j}$
      converges in mean and in probability, as well --- to the same limit.
    \item
      The iterated series $\sum_{i=-\infty}^\infty \left(\sum_{j=-\infty}^\infty a_{i,j} \xi_{i,j}\right)$
      converges to the same limit.
    \item
      $\Prob\left({\sum\sum}_{i,j=-\infty}^\infty |a_{i,j} \xi_{i,j}|<\infty\right) = 1$,
      and whenever ${\sum\sum}_{i,j=-\infty}^\infty |a_{i,j} \xi_{i,j}|<\infty$,
      the double series ${\sum\sum}_{i,j=-\infty}^\infty a_{i,j} \xi_{i,j}$ converges, and its limit
      does not depend on the order its terms are added.
  \end{enumerate}
\end{remark}

\begin{proposition}\label{prop:unconditional-double-series}
  Let $\{\epsilon_{i,j},\;  i, j\in\mathbb{Z}\}$ be a collection of random uncorrelated variables
  with zero mean and same variance $\var \epsilon_{i,j} = \sigma_\epsilon^2 < \infty$.
  Let $\{\xi_{i,j},\;  i, j\in\mathbb{Z}\}$ be a collection of real numbers such that
  ${\sum\sum}_{i,j=-\infty}^\infty a_{i,j}^2 < \infty$.
  Then the double series ${\sum\sum}_{i,j=-\infty}^\infty a_{i,j} \xi_{i,j}$
  converges in mean squares.
  The limit does not depend on the order the terms of the double series are added:
  it is the same up to equal-almost-surely equivalence.
\end{proposition}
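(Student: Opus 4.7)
The plan is to recognize this as the standard statement that the map from square-summable sequences to the $L^2$ closure of the span of $\{\epsilon_{i,j}\}$ is a Hilbert-space isometry. Because the $\epsilon_{i,j}$ are uncorrelated with common variance $\sigma_\epsilon^2$, for any finite subset $F \subset \mathbb{Z}^2$ the partial sum $S_F = \sum_{(i,j) \in F} a_{i,j} \epsilon_{i,j}$ satisfies $\ME S_F^2 = \sigma_\epsilon^2 \sum_{(i,j) \in F} a_{i,j}^2$, and more generally for finite sets $F, G$ one has $\ME (S_F - S_G)^2 = \sigma_\epsilon^2 \sum_{(i,j) \in F \triangle G} a_{i,j}^2$. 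This is the only identity I need.

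First I would fix an enumeration, for example $F_n = \{(i,j) : |i|, |j| \le n\}$, and show that $\{S_{F_n}\}$ is Cauchy in $L^2$: given $\varepsilon > 0$, choose $N$ so large that $\sum_{(i,j) \notin F_N} a_{i,j}^2 < \varepsilon/\sigma_\epsilon^2$, which is possible because $\sum_{i,j} a_{i,j}^2 < \infty$; then for $m, n \ge N$ one has $F_m \triangle F_n \subset \mathbb{Z}^2 \setminus F_N$, so $\ME (S_{F_n} - S_{F_m})^2 \le \varepsilon$. Completeness of $L^2$ gives a limit $S \in L^2(\Omega)$, and this proves mean-square convergence of the double series in this particular enumeration.

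Next I would verify order-independence. Let $\{F_n'\}$ be any other increasing sequence of finite subsets with $\bigcup_n F_n' = \mathbb{Z}^2$, and let $S'$ be the corresponding $L^2$-limit, which exists by the same Cauchy argument. For any fixed $N$, eventually both $F_n$ and $F_n'$ contain $F_N$, so by the identity above $\ME (S_{F_n} - S_{F_n'})^2 = \sigma_\epsilon^2 \sum_{(i,j) \in F_n \triangle F_n'} a_{i,j}^2 \le \sigma_\epsilon^2 \sum_{(i,j) \notin F_N} a_{i,j}^2$, which can be made arbitrarily small. Passing to the $L^2$-limit gives $\ME(S - S')^2 = 0$, i.e.\ $S = S'$ almost surely. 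Since any ordering of the terms of the double series produces an increasing sequence of finite subsets exhausting $\mathbb{Z}^2$, the limit is the same for all orderings up to almost-sure equality.

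There is no serious obstacle here: the argument is a direct application of the Cauchy criterion in $L^2$ combined with the orthogonality identity. The only point requiring a little care is to spell out precisely what ``the double series converges'' means (i.e.\ along which exhausting sequences of finite index sets), which the order-independence step then disposes of uniformly.
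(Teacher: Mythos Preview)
The paper states this proposition in the appendix as an auxiliary fact and gives no proof at all, so there is nothing to compare your argument against. Your proof is correct and is the standard one: orthogonality of the $\epsilon_{i,j}$ gives the identity $\ME(S_F-S_G)^2=\sigma_\epsilon^2\sum_{(i,j)\in F\triangle G}a_{i,j}^2$, which turns the Cauchy criterion in $\ell^2(\mathbb{Z}^2)$ into the Cauchy criterion in $L^2(\Omega)$, and the same identity disposes of order-independence. (Incidentally, the statement as printed has a typo---the coefficients are called both $\xi_{i,j}$ and $a_{i,j}$, and the summands should be $a_{i,j}\epsilon_{i,j}$---but you interpreted it correctly.)
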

\pagebreak
\begin{remark}\mbox{}
  \begin{enumerate}
    \item
      Under conditions of Proposition~\ref{prop:unconditional-double-series},
      the double series ${\sum\sum}_{i,j=-\infty}^\infty a_{i,j} \xi_{i,j}$
      converges in mean and in probability, as well.
    \item
      The iterated series $\sum_{i=-\infty}^\infty \left(\sum_{j=-\infty}^\infty a_{i,j} \xi_{i,j}\right)$
      converges in mean squares to the same limit.
  \end{enumerate}
\end{remark}

Next lemma is used in Proposition~\ref{prop:withpsikl-causal}.
\begin{lemma}\label{lemma:suf-four-summ}
  Let $f\in C(\mathbb{R}^2)$ be a biperiodic function of two variables with continuous mixed derivative:
  \begin{gather*}
    f(x,y) = f(x+1,y) = f(x, y+1) \quad \mbox{for all $x,y\in\mathbb{R}$},
    \\
    f\in C(\mathbb{R}^2), \quad
    \frac{\partial f}{\partial x}\in C(\mathbb{R}^2), \quad
    \frac{\partial f}{\partial y}\in C(\mathbb{R}^2), \quad
    \frac{\partial^2 f}{\partial x \, \partial y}
    \in C(\mathbb{R}^2) .
  \end{gather*}
  Then Fourier coefficients of the function $f$ are summable:
  \[
    \sum_{k=-\infty}^\infty \sum_{l=-\infty}^\infty
    \left| \int_0^1 \int_0^1 \exp(2 \pi \mathrm{i} \, (k x + l y) ) \, f(x,y) \, dx \, dy \right|
    < \infty .
  \]
\end{lemma}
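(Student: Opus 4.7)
\medskip

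\noindent\textbf{Proof proposal for Lemma~\ref{lemma:suf-four-summ}.}
The plan is to estimate the Fourier coefficients
\[
  c_{k,l} = \int_0^1 \int_0^1 \exp(2 \pi \mathrm{i}\,(kx+ly))\, f(x,y)\, dx\, dy
\]
by integration by parts (using periodicity to kill the boundary terms), then combine Parseval's identity with the Cauchy--Schwarz inequality. The sum $\sum_{k,l} |c_{k,l}|$ is split into four regions according to whether $k$ and $l$ vanish; this is necessary because integration by parts in a given variable only gains a factor $1/k$ or $1/l$ when the corresponding frequency is nonzero.

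For the main region $k\neq 0$, $l\neq 0$, I would integrate by parts once in $x$. Since $f(0,y)=f(1,y)$ by periodicity, the boundary term vanishes and
\[
  c_{k,l} = \frac{-1}{2 \pi \mathrm{i} k}
  \int_0^1 \int_0^1 e^{2 \pi \mathrm{i}(kx+ly)}\, \frac{\partial f}{\partial x}(x,y)\, dx\, dy .
\]
Integrating by parts once more in $y$ and again using periodicity gives
$c_{k,l} = -\frac{1}{4 \pi^2 k l}\, a_{k,l}$, where $a_{k,l}$ is the Fourier coefficient of $\partial^2 f / (\partial x\, \partial y)$. Since $\partial^2 f/(\partial x\, \partial y)$ is continuous on the torus and hence in $L^2([0,1]^2)$, Parseval's identity gives $\sum_{k,l} |a_{k,l}|^2 < \infty$. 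By Cauchy--Schwarz,
\[
  \sum_{k\neq 0} \sum_{l\neq 0} |c_{k,l}|
  \le \frac{1}{4\pi^2}
  \biggl(\sum_{k\neq 0}\sum_{l\neq 0} \frac{1}{k^2 l^2}\biggr)^{1/2}
  \biggl(\sum_{k,l} |a_{k,l}|^2\biggr)^{1/2} < \infty .
\]

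For the strip $k=0$, $l\neq 0$, set $g(y) = \int_0^1 f(x,y)\,dx$. Then $g$ is $1$-periodic and, by continuity of $\partial f/\partial y$, continuously differentiable with $g'(y) = \int_0^1 (\partial f/\partial y)(x,y)\,dx$. One integration by parts yields $c_{0,l} = -\frac{1}{2\pi \mathrm{i} l}\, \widehat{g'}(l)$; since $g' \in L^2([0,1])$, Parseval and Cauchy--Schwarz give $\sum_{l\neq 0} |c_{0,l}|<\infty$ exactly as above. The strip $k\neq 0$, $l=0$ is symmetric, and the single term $c_{0,0}$ is finite since $f$ is continuous on a compact set. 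Summing the four bounds completes the proof.

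The only subtlety is recognizing that one cannot expect the pointwise bound $|c_{k,l}| \le C/|kl|$ alone to give absolute summability (the sum $\sum 1/|kl|$ diverges), so one must retain the Parseval-estimated factor $a_{k,l}$ and finish with Cauchy--Schwarz rather than a crude sup bound; the same reasoning is what forces the separate treatment of the axes $k=0$ and $l=0$.
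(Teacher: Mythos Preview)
Your argument is correct. Both you and the paper begin with the same integration-by-parts identities
\[
  c_{k,l} = \frac{-1}{4\pi^2 kl}\,a_{k,l},\qquad
  a_{k,l} = \int_0^1\!\!\int_0^1 e^{2\pi\mathrm{i}(kx+ly)}\,\frac{\partial^2 f}{\partial x\,\partial y}\,dx\,dy
  \quad (k,l\neq 0),
\]
and the analogous one-variable versions on the axes. The paper then replaces $|a_{k,l}|$ by the sup-norm of the mixed derivative, obtaining $|c_{k,l}|\le C/(4\pi^2|kl|)$, and asserts that this ``implies the summability''; but of course $\sum_{k\neq 0}\sum_{l\neq 0}|kl|^{-1}$ diverges, so that step is a gap in the paper's write-up. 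Your route---keeping the Fourier coefficient $a_{k,l}$, invoking Parseval for $\partial^2 f/(\partial x\,\partial y)\in L^2$, and closing with Cauchy--Schwarz against $\sum |kl|^{-2}<\infty$---is exactly the standard repair, and your explicit remark that the sup bound alone is insufficient is on point. The separate handling of the axes $k=0$ and $l=0$ via the averaged functions $g(y)=\int_0^1 f(x,y)\,dx$ is also clean; the paper treats these strips with the same sup-norm shortcut, which again gives only $|c_{0,l}|\le C/|l|$ and needs the identical Parseval/Cauchy--Schwarz fix you supply.
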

\begin{proof}
  Denote the Fourier coefficients $c_{k,l}$:
  \[
    c_{k,l} =
    \int_0^1 \int_0^1 \exp(2 \pi \mathrm{i} \, (k x + l y) ) \, f(x,y) \, dx \, dy .
  \]
  The coefficients $c_{k,l}$ are also equal
  \begin{gather*}
    c_{k,l} = \frac{\mathrm{i}}{2\pi k} \int_0^t \int_0^1 \exp(2 \pi \mathrm{i} \, (k x + l y) ) \, f'_1(x,y) \, dx \, dy
    \qquad\mbox{if $k\neq 0$}; \\
    c_{k,l} = \frac{\mathrm{i}}{2\pi l} \int_0^t \int_0^1 \exp(2 \pi \mathrm{i} \, (k x + l y) ) \, f'_2(x,y) \, dx \, dy
    \qquad\mbox{if $l\neq 0$}; \\
    c_{k,l} = \frac{-1}{4 \pi^2 k l} \int_0^t \int_0^1 \exp(2 \pi \mathrm{i} \, (k x + l y) ) \, f''_{12}(x,y) \, dx \, dy
    \qquad\mbox{if $k\neq 0$ and $l\neq 0$},
  \end{gather*}
  where $f'_1(x,y)$, $f'_2(x,y)$, and $f''_{12}(x,y)$
  are partial and mixed derivatives;
  here the periodicity of the function $f$ is used.
  The coefficients $c_{k,l}$ allow bounding:
  \begin{gather*}
    c_{k,0} \le \frac{1}{2\pi |k|} \max_{x,y} \left| \frac{\partial f(x,y)}{\partial x} \right| \qquad\mbox{if $k\neq 0$}; \\
    c_{0,l} \le \frac{1}{2\pi |l|} \max_{x,y} \left| \frac{\partial f(x,y)}{\partial y} \right| \qquad\mbox{if $l\neq 0$}; \\
    c_{k,l} \le \frac{1}{4\pi^2 |k l|} \max_{x,y} \left| \frac{\partial^2 f(x,y)}{\partial y \partial x} \right| \qquad\mbox{if $k\neq 0$ and $l\neq 0$},
  \end{gather*}
  which implies the summability.
\end{proof}

\subsection{Integration}
\begin{lemma}\label{lemma:integration}
  If $A>0$, $B$ and $C$ are real numbers such that $A^2 >  B^2 + C^2$,
  then
  \begin{equation}\label{eq:li-eq1}
    \int_{-1/2}^{1/2} \frac{d t}
    {A + B \cos(2 \pi t) + C \sin(2 \pi t)}
    = \frac{1}{\sqrt{A^2 - B^2 - C^2}} .
  \end{equation}

  If $A$ and $B$ are real numbers, $A > |B|$, and $n$ is integer, then
  \begin{equation}\label{eq:li-eq2}
    \int_{-1/2}^{1/2} \frac{\exp(2 \pi \mathrm{i} n t)}
    {A + B \cos (2 \pi t)} \, dt
    = \frac{\alpha^{|n|}}{\sqrt{A^2 - B^2}},
  \end{equation}
  where
  \[
    \alpha = \frac{-B}{A + \sqrt{A^2 - B^2}}
    = \begin{cases}
      0 & \mbox{if $B=0$}, \\
      (-A + \sqrt{A^2 - B^2}) / B & \mbox{if $B\neq 0$} .
    \end{cases}
  \]
\end{lemma}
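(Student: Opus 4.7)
}

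The plan is to reduce the first formula to the second via a trigonometric phase shift, and to establish the second by contour integration on the unit circle.

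\emph{Reduction of \eqref{eq:li-eq1} to \eqref{eq:li-eq2} with $n=0$.} Write
\[
  B\cos(2\pi t) + C\sin(2\pi t) = \sqrt{B^2+C^2}\,\cos(2\pi(t - t_0))
\]
for a suitable $t_0\in\mathbb{R}$ (taking $t_0=0$ if $B=C=0$). Since the integrand on the left of \eqref{eq:li-eq1} is $1$-periodic in $t$, a change of variables $t\mapsto t+t_0$ does not alter the integral over $[-\tfrac12,\tfrac12]$. Thus \eqref{eq:li-eq1} becomes $\int_{-1/2}^{1/2} (A+\sqrt{B^2+C^2}\cos(2\pi t))^{-1}\,dt$, and the hypothesis $A^2>B^2+C^2$ guarantees $A>|\sqrt{B^2+C^2}|$, so \eqref{eq:li-eq2} with $n=0$ yields $1/\sqrt{A^2-B^2-C^2}$, as required.

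\emph{Proof of \eqref{eq:li-eq2}.} The case $B=0$ is immediate (the integral vanishes for $n\neq 0$ and equals $1/A$ for $n=0$, matching $\alpha=0$). Assume $B\neq 0$ and $n\ge 0$ (the case $n<0$ follows from the symmetry $t\mapsto -t$, which shows that the integral is unchanged when $n$ is replaced by $-n$). Substitute $z=e^{2\pi\mathrm{i} t}$, so that $dt = dz/(2\pi\mathrm{i} z)$ and $\cos(2\pi t) = (z+z^{-1})/2$:
\[
  \int_{-1/2}^{1/2} \frac{e^{2\pi\mathrm{i} n t}}{A+B\cos(2\pi t)}\,dt
  = \frac{1}{\pi\mathrm{i}}\oint_{|z|=1} \frac{z^n}{B z^2 + 2A z + B}\,dz.
\]
The denominator factors as $B(z-z_1)(z-z_2)$, where $z_1 z_2 = 1$ and $z_1+z_2 = -2A/B$; solving,
\[
  z_1 = \frac{-A+\sqrt{A^2-B^2}}{B}, \qquad z_2 = \frac{-A-\sqrt{A^2-B^2}}{B}.
\]
The assumption $A>|B|>0$ gives $A-\sqrt{A^2-B^2}<|B|$, so $|z_1|<1$ while $|z_2|=1/|z_1|>1$. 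Hence for $n\ge 0$ only $z_1$ contributes a pole inside the unit disk, and the residue theorem gives
\[
  \frac{1}{\pi\mathrm{i}}\oint_{|z|=1}\frac{z^n}{B(z-z_1)(z-z_2)}\,dz
  = \frac{2 z_1^n}{B(z_1-z_2)} = \frac{z_1^n}{\sqrt{A^2-B^2}},
\]
using $B(z_1-z_2) = 2\sqrt{A^2-B^2}$. Finally, rationalizing gives $z_1 = -B/(A+\sqrt{A^2-B^2}) = \alpha$, and combining with the $n<0$ case finishes the computation.

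\emph{Anticipated obstacle.} The arguments are essentially standard, but some care is required with bookkeeping: verifying that exactly one root $z_1$ lies strictly inside the unit circle under the hypothesis $A>|B|$ (rather than merely $A\ge|B|$, where the poles would land on the contour), and treating negative $n$ without separately computing the residue at $z=0$ (handled cleanly by the symmetry $t\mapsto -t$). The sign normalization of $\alpha$ also needs to be checked against the two equivalent expressions given in the statement.
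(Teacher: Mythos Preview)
Your proof is correct, but it proceeds along a genuinely different route from the paper's own argument. For \eqref{eq:li-eq1} the paper simply writes down the explicit antiderivative
\[
  \frac{1}{\pi \sqrt{A^2 - B^2 - C^2}}\,
  \arctan\!\left(\frac{(A-B)\tan(\pi t) + C}{\sqrt{A^2-B^2-C^2}}\right)
\]
and reads off the value of the definite integral, whereas you reduce \eqref{eq:li-eq1} to the $n=0$ case of \eqref{eq:li-eq2} by amplitude--phase rewriting and periodicity. For \eqref{eq:li-eq2} the paper does not use contour integration at all: instead it recognizes $(A+B\cos 2\pi t)^{-1}$ as the spectral density of the AR(1) process $X_k=\alpha X_{k-1}+\epsilon_k$ with innovation variance $(1+\alpha^2)/A$, and then reads off the integral as the well-known autocovariance $\alpha^{|n|}\var X_0$. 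Your residue computation is more self-contained and avoids both the need to guess the antiderivative and the appeal to the AR(1) autocovariance formula; the paper's argument, on the other hand, is shorter and thematically fitting, since the lemma is being used precisely to compute autocovariances of an autoregressive field. The bookkeeping you flag (location of the roots relative to the unit circle, the $n<0$ case via $t\mapsto -t$, and the identification $z_1=\alpha$) is all handled correctly.
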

In \eqref{eq:li-eq2}, if $\alpha=0$ and $n=0$, then $\alpha^{|n|} = 1$ by convention.
\begin{proof}
  That is easy to check that the antiderivative in \eqref{eq:li-eq1} is
  \begin{multline*}
    \int\frac{dt}{A + B \cos(2\pi t) + C \sin(2\pi t)}
    = \\ =
    \frac{1}{\pi \sqrt{A^2 - B^2 - C^2}} \,
    \arctan\!\left( \frac{(A - B) \tan(\pi t) + C}{\sqrt{A^2 - B^2 - C^2}} \right) + \mathrm{const},
  \end{multline*}
  whence \eqref{eq:li-eq1} follows. (Notice that $A-B > 0$.)

  In \eqref{eq:li-eq2}, the function
  \[
    \frac{1}{A + B \cos(2 \pi t)} = \frac{1 + \alpha^2}{A \, (1 - 2 \alpha \cos (2\pi t) + \alpha^2)}
    = \frac{1 + \alpha^2}{A} \, \frac{1}{|1 - \alpha \exp(2 \pi \mathrm{i} t)|^2}
  \]
  is the spectral density of AR(1) stationary autoregressive process $X_k = \alpha X_{k-1} + \epsilon_k$
  with white-noise variance $\var \epsilon_k = (1 + \alpha^2) / A$. (Conditions imply that $|\alpha| < 1$.)
  The integral is the autocovariance function of the process. It equals
  \begin{equation*}
    \int_{-1/2}^{1/2} \frac{\exp(2 \pi \mathrm{i} n t)}
    {A + \cos (2 \pi t)} \, dt
    = \cov(X_{k+n}, X_k) = \frac{1 + \alpha^2}{A} \, \frac{\alpha^{|n|}}{1 - \alpha^2}
    = \frac{\alpha^{|n|}}{\sqrt{A^2 - B^2}} \, ;
  \end{equation*}
  here equality
  \[
    \frac{1+\alpha^2}{1-\alpha^2} = \frac{A}{\sqrt{A^2 - B^2}}
  \]
  is used.
\end{proof}

\begin{lemma}\label{lem:int-oneie}
  If $A$ and $B$ are complex numbers, $|A| \neq |B|$,
  and $n$ is an integer number, then
  \begin{equation}\label{eq:intetyui}
    \int_{-1/2}^{1/2}
    \frac{e^{2\pi{\mathrm i} n \nu}}{A - B e^{2\pi {\mathrm{i}} \nu}} \, d\nu
    =
    \begin{cases}
      A^{n-1} B^{-n} & \mbox{if $n\le 0$ and $|A| > |B|$,} \\
      0              & \mbox{if $n\le 0$ and $|A| < |B|$,} \\
      0              & \mbox{if $n\ge 1$ and $|A| > |B|$,} \\
      -A^{n-1} B^{-n} & \mbox{if $n\ge 1$ and $|A| < |B|$.} \\
    \end{cases}
  \end{equation}
\end{lemma}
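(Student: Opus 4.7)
My plan is to expand the integrand as a geometric series and exploit orthogonality of the exponentials $\{e^{2\pi\mathrm{i} k\nu}\}_{k\in\mathbb{Z}}$ on $[-\tfrac12,\tfrac12]$. Since $|A|\neq|B|$, neither $A$ nor $B$ can be zero (otherwise $|A|=|B|=0$ would fail anyway, but in particular the relevant factorization is always available), so I can split into the two subcases $|A|>|B|$ and $|A|<|B|$.

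In the subcase $|A|>|B|$, I would factor out $A$ and write
\[
\frac{1}{A - B e^{2\pi\mathrm{i}\nu}}
= \frac{1}{A}\sum_{k=0}^{\infty} \left(\frac{B}{A}\right)^{k} e^{2\pi\mathrm{i} k \nu}.
\]
Because $|B/A|<1$, the series converges uniformly in $\nu\in[-\tfrac12,\tfrac12]$, so termwise integration is legitimate. Multiplying by $e^{2\pi\mathrm{i} n\nu}$ and integrating, only the term with $k=-n$ survives, which requires $k\ge 0$, i.e.\ $n\le 0$; in that case the integral equals $A^{-1}(B/A)^{-n}=A^{n-1}B^{-n}$, and otherwise it is $0$.

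In the subcase $|A|<|B|$, I would instead factor out $-Be^{2\pi\mathrm{i}\nu}$, obtaining
\[
\frac{1}{A - B e^{2\pi\mathrm{i}\nu}}
= -\frac{e^{-2\pi\mathrm{i}\nu}}{B}\sum_{k=0}^{\infty}\left(\frac{A}{B}\right)^{k} e^{-2\pi\mathrm{i} k\nu}
= -\sum_{k=0}^{\infty}\frac{A^{k}}{B^{k+1}}\, e^{-2\pi\mathrm{i}(k+1)\nu},
\]
which converges uniformly since $|A/B|<1$. After multiplying by $e^{2\pi\mathrm{i} n\nu}$ and integrating, only the term with $k+1=n$ can contribute, requiring $n\ge 1$; in that case the value is $-A^{n-1}B^{-n}$, and otherwise it is $0$.

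There is no real obstacle here: the only point needing a brief justification is the interchange of the sum and the integral, which is covered by the uniform convergence of the geometric series on the unit circle (guaranteed by $|B/A|<1$ or $|A/B|<1$). The four cases in the statement then line up directly with the two factorizations combined with the sign of $n$.
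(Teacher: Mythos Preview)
Your proof is correct and takes a genuinely different route from the paper's. The paper substitutes $z=e^{2\pi\mathrm{i}\nu}$ to rewrite the integral as a contour integral around the unit circle,
\[
  \int_{-1/2}^{1/2}\frac{e^{2\pi\mathrm{i}\nu}}{A-B e^{2\pi\mathrm{i}\nu}}\,d\nu
  = \frac{1}{2\pi\mathrm{i}}\oint \frac{1}{A-Bz}\,dz,
\]
applies the residue formula for $n=1$, and then says the remaining cases ``can be reduced to the case $n=1$ recursively'' without spelling out the recursion. Your geometric-series expansion is more elementary---no complex analysis or residues are needed---and handles all integers $n$ uniformly through the orthogonality relations $\int_{-1/2}^{1/2} e^{2\pi\mathrm{i} m\nu}\,d\nu=\delta_{m,0}$. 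The paper's approach, by contrast, explains conceptually why the answer depends on which side of the unit circle the pole $A/B$ lies.

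One minor wording slip: the assertion that $|A|\neq|B|$ implies ``neither $A$ nor $B$ can be zero'' is false (take $A=1$, $B=0$). What is true is that they cannot both vanish, and in each of your two subcases the factor you pull out ($A$ when $|A|>|B|$, or $B$ when $|A|<|B|$) is indeed nonzero, so the relevant factorization is always available and the argument goes through unchanged.
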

\begin{proof}
  For $n=1$, the integral can be rewritten as the contour integral,
  \[
    \int_{-1/2}^{1/2}
    \frac{e^{2\pi{\mathrm i} \nu}}{A - B e^{2\pi {\mathrm{i}} \nu}} \, d\nu
    =
    \frac{1}{2\pi \mathrm{i}} \oint_{z=\exp(2 \pi \mathrm{i} \nu)}
    \frac{1}{A - B z} \, d z
  \]
  and the residue formula is applicable.
  Other cases can be reduced to the case $n=1$ recursively.
\end{proof}

\begin{lemma}\label{lem:int-oneie2}
  Let $a$, $b$ and $c$ be real numbers such that
  $1-a-b-c > 0$, {} $1-a+b+c > 0$, {} $1+a-b+c>0$ and $1+a+b-c > 0$.
  Let $n_1$ and $n_2$ be nonnegative integers.
  Then
  \[
    \int_{-1/2}^{1/2}
    \frac{(a + c e^{2 \pi \mathrm{i} \nu})^{n_1} e^{-2\pi \mathrm{i} n_2 \nu}}
    {(1 - b e^{2\pi\mathrm{i} \nu})^{n_1+1}}
    \, d \nu =
    \sum_{k=0}^{\min(n_1,n_2)}
    \binom{n_1}{k} \binom{n_2}{k} a^{n_1-k} b^{n_2-k} (ab+c)^k.
  \]
\end{lemma}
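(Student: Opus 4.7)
The plan is to convert the real integral into a contour integral, identify it with the coefficient of $z^{n_2}$ in a power series, and then extract that coefficient via a well-chosen algebraic manipulation.

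First, I would substitute $z = e^{2\pi\mathrm{i}\nu}$, so $d\nu = dz/(2\pi\mathrm{i} z)$, and rewrite the integral as
\[
  \frac{1}{2\pi\mathrm{i}} \oint_{|z|=1}
  \frac{(a + c z)^{n_1}}{(1 - b z)^{n_1 + 1} \, z^{n_2+1}} \, dz.
\]
The four causality inequalities $f_k > 0$ ($k=1,\ldots,4$) from \eqref{eq:deff1..f4} imply, by adding them pairwise, that $|a| < 1$, $|b| < 1$, $|c| < 1$; in particular $|b| < 1$, so the pole $z = 1/b$ of the integrand lies strictly outside the unit disk (or is absent when $b = 0$). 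The only singularity inside $|z| \le 1$ is $z = 0$, of order $n_2 + 1$. Hence the integral equals $[z^{n_2}] \bigl( (a+cz)^{n_1} (1-bz)^{-n_1-1} \bigr)$, the coefficient of $z^{n_2}$ in the Taylor expansion at the origin.

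The key algebraic step is the identity
\[
  \frac{a + c z}{1 - b z} \;=\; a \;+\; \frac{(a b + c)\, z}{1 - b z},
\]
which I would verify by clearing denominators. Raising this to the $n_1$-th power and multiplying by $(1-bz)^{-1}$ gives
\[
  \frac{(a + c z)^{n_1}}{(1 - b z)^{n_1+1}}
  \;=\; \sum_{k=0}^{n_1} \binom{n_1}{k} a^{n_1 - k} (a b + c)^k \, \frac{z^k}{(1 - b z)^{k+1}}.
\]
Now I would expand each $(1 - b z)^{-k-1} = \sum_{m \ge 0} \binom{k+m}{m} b^m z^m$ (valid since $|bz|$ is small near the origin) and extract the coefficient of $z^{n_2}$ by setting $m = n_2 - k$. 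Only terms with $0 \le k \le \min(n_1, n_2)$ survive, giving
\[
  [z^{n_2}] \frac{(a + c z)^{n_1}}{(1 - b z)^{n_1+1}}
  \;=\; \sum_{k=0}^{\min(n_1, n_2)} \binom{n_1}{k} \binom{n_2}{k} a^{n_1 - k} b^{n_2 - k} (a b + c)^k,
\]
using the simplification $\binom{k + (n_2 - k)}{n_2 - k} = \binom{n_2}{k}$. This matches the claimed formula.

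There is no real obstacle; the only thing to watch is checking $|b| < 1$ from the causality hypotheses to justify both the absence of interior poles at $z = 1/b$ and the convergence of the geometric series used to extract coefficients. The algebraic rewriting $(a+cz)/(1-bz) = a + (ab+c)z/(1-bz)$ is the trick that makes the binomial coefficients $\binom{n_1}{k}\binom{n_2}{k}$ and the combination $ab+c$ appear naturally, avoiding a brute-force Vandermonde-style convolution identity.
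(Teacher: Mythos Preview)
Your proposal is correct and follows essentially the same route as the paper: both convert to a contour integral, use $|b|<1$ (derived from the four inequalities) to locate the only relevant singularity at $z=0$, expand $(1-bz)^{-k-1}$ as a binomial series, and exploit the same algebraic decomposition $a+cz = a(1-bz) + (ab+c)z$ to make the factors $\binom{n_1}{k}$ and $(ab+c)^k$ appear. Your phrasing via $[z^{n_2}]$ coefficient extraction is a bit more streamlined than the paper's residue computation, but the argument is the same.
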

\begin{proof}
  Conditions of the lemma imply that
  \begin{gather*}
    1-b = \frac{(1-a-b-c)+(1+a-b+c)}{2} > 0, \\
    1+b = \frac{(1-a+b+c)+(1+a+b-c)}{2} > 0, \\
    |b| < 1.
  \end{gather*}

  Let $k\ge 0$ and $n$ be integer numbers.
  By binomial formula,
  \begin{equation}\label{eq:e1}
    \frac{1}{(1 - b z)^{k+1}} = \sum_{m=0}^\infty \binom{-k-1}{m} (- b z)^m
    = \sum_{m=0}^{\infty} \binom{k+m}{m} (b z)^m
  \end{equation}
  for all complex $z$ such that $|b z| < 1$, in particular, for  all $z$ such that $|z|\le 1$.
  Here
  $\binom{-k-1}{m} = \frac{(-k-1)\cdots(-k-m)}{1\cdots m} = (-1)^m \binom{k+m}{m}$
  is a coefficient in a binomial series.
    The rational function $z^n (1 - b z)^{-k-1}$
  have a singularity a point $1/b$ and potential singularity (if $n<0$) at point $0$.
  The point $1/b$ lies outside the unit circle, and point 0 lies inside the unit circle.
  The expansion \eqref{eq:e1} implies that
  \begin{align*}
    \Res_{z=0} \left( \frac{z^n}{(1 - b z)^{k+1}} \right)
    &=
    \Res_{z=0} \left( \sum_{m=0}^\infty \binom{k+m}{m} b^m z^{m+n} \right)
    = \\ &=
    \begin{cases}
      0 & \mbox{if $n\ge 0$}, \\
      \binom{k-n-1}{-n-1} b^{-n-1}
      & \mbox{if $n \le -1$ .}
    \end{cases}
  \end{align*}
  By the residue formula,
  \begin{align*}
    \oint \frac{z^n}{(1-b z)^{k+1}} \, dz
    &=
    2 \pi \mathrm{i} \Res_{z=0} \left( \frac{z^n}{(1 - b z)^{k+1}} \right)
    = \\ &=
    \begin{cases}
      0 & \mbox{if $n\ge 0$}, \\
      2 \pi \mathrm{i} \binom{k-n-1}{-n-1} b^{-n-1}
      & \mbox{if $n \le -1$,}
    \end{cases}
  \end{align*}
  where the integral is taken along the unit-circle contour.

  Recall that $n_2 \ge 0$ is integer.
  Then
  \begin{multline}\label{eq:int-trif-cases}
    \int_{-1/2}^{1/2} \frac{\exp(2 \pi \mathrm{i} (k-n_2) \nu_2)}
      {(1 - b e^{2\pi\mathrm{i} \nu_2})^{1+k}} \, d\nu_2
    =
    \frac{1}{2\pi\mathrm{i}} \oint \frac{z^{k - n_2 - 1}} {(1 - b z)^{k+1}} \, dz
    = \\ =
    \begin{cases}
      0 & \mbox{if $k>n_2$,} \\
      \binom{n_2}{n_2-k} b^{n_2-k}
      & \mbox{if $k\le n_2$}
    \end{cases}
    =
    \begin{cases}
      0 & \mbox{if $k>n_2$,} \\
      \binom{n_2}{k} b^{n_2-k}
      & \mbox{if $k\le n_2$.}
    \end{cases}
  \end{multline}

  The constant $n_1\ge 0$ is also integer, and by binomial formula
  \begin{align}
    (a + c e^{2 \pi\mathrm{i} \nu})^{n_1}
    &=
    ((a b + c) e^{2 \pi \mathrm{i} \nu} + a (1 - b e^{2 \pi \mathrm{i} \nu}))^{n_1}
    = \nonumber \\ &=
    \sum_{k=0}^{n_1}
    \binom{n_1}{k} (a b + c)^k e^{2 \pi \mathrm{i} k \nu}
                   a^{n_1-k} (1 - b e^{2 \pi \mathrm{i} \nu})^{n_1 - k}.
  \label{eq:binom-use-2}
  \end{align}

  Finally, with use of \eqref{eq:binom-use-2} and \eqref{eq:int-trif-cases},
  \begin{align*}
    & \int_{-1/2}^{1/2}
      \frac{e^{-2 \pi \mathrm{i} n_2 \nu} (a + c e^{2\pi \mathrm{i} \nu})^{n_1}}
           {(1 - b e^{2\pi \mathrm{i} \nu})^{n_1+1}}
           \, d\nu
      = \\ &=
      \int\limits_{-1/2}^{1/2}\!\!
      \frac{e^{-2 \pi \mathrm{i} n_2 \nu}}
           {(1 {-} b e^{2\pi \mathrm{i} \nu})^{n_1+1}}
           \sum_{k=0}^{n_1} \!
           \binom{n_1}{k}
      (a b {+} c)^k e^{2\pi \mathrm{i} k\nu} a^{n_1-k} (1 {-} b e^{2\pi \mathrm{i} \nu})^{n_1-k}
            d\nu
      = \\ &=
      \sum_{k=0}^{n_1}
      \binom{n_1}{k}
      a^{n_1 - k}
      (a b + c)^k
      \int_{-1/2}^{1/2}
      \frac{e^{2 \pi \mathrm{i} (k-n_2) \nu}}
           {(1 - b e^{2\pi \mathrm{i} \nu})^{1+k}}
           \, d\nu
      = \\ &=
      \sum_{k=0}^{\min(n_1,\,n_2)}
      \binom{n_1}{k}
      a^{n_1 - k}
      (a b + c)^k
      \binom{n_2}{k} b^{n_2-k} .
      \qedhere
  \end{align*}
\end{proof}

\end{document}